\DeclareFontFamily{U}{mathx}{\hyphenchar\font45}
\DeclareFontShape{U}{mathx}{m}{n}{ <5> <6> <7> <8> <9> <10>
   <10.95> <12> <14.4> <17.28> <20.74> <24.88> mathx10 }{}
\DeclareSymbolFont{mathx}{U}{mathx}{m}{n}
\DeclareMathAccent{\widecheck}{0}{mathx}{"71}
\theoremstyle{plain}
\newtheorem{theorem}{Theorem}[section]
\newtheorem{prop}[theorem]{Proposition}
\newtheorem{lemma}[theorem]{Lemma}
\newtheorem{coro}[theorem]{Corollary}
\newtheorem{fact}[theorem]{Fact}
\theoremstyle{definition}
\newtheorem{definition}[theorem]{Definition}
\newtheorem{example}[theorem]{Example}
\newtheorem{remark}[theorem]{Remark}
\newcommand{\ts}{\hspace{0.5pt}}
\newcommand{\nts}{\hspace{-0.5pt}}
\newcommand{\CC}{\mathbb{C}\ts}
\newcommand{\RR}{\mathbb{R}\ts}
\newcommand{\ZZ}{{\ts \mathbb{Z}}}
\newcommand{\KK}{{\ts \mathbb{K} \ts }}
\newcommand{\SSS}{\mathbb{S}}
\newcommand{\TT}{\mathbb{T}}
\newcommand{\MM}{\mathbb{M}}
\newcommand{\NN}{\mathbb{N}}
\newcommand{\cA}{\mathcal{A}}
\newcommand{\cB}{\mathcal{B}}
\newcommand{\cM}{\mathcal{M}}
\newcommand{\cL}{\mathcal{L}}
\newcommand{\cO}{\mathcal{O}}
\newcommand{\cS}{\mathcal{S}}
\newcommand{\vG}{\varGamma}
\newcommand{\vL}{\varLambda}
\newcommand{\ii}{\mathrm{i}\ts}
\newcommand{\ee}{\mathrm{e}}
\newcommand{\dd}{\, \mathrm{d}}
\newcommand{\oplam}{\mbox{\Large $\curlywedge$}}
\newcommand{\exend}{\hfill $\Diamond$}
\newcommand{\defeq}{\mathrel{\mathop:}=}
\newcommand{\eqdef}{=\mathrel{\mathop:}}
\DeclareMathOperator{\dens}{dens}
\DeclareMathOperator{\udens}{\overline{dens}}
\DeclareMathOperator{\uudens}{\overline{u-dens}}
\DeclareMathOperator{\ldens}{\underline{dens}}
\DeclareMathOperator{\uldens}{\underline{u-dens}}
\DeclareMathOperator{\vol}{vol}
\DeclareMathOperator{\card}{card}
\DeclareMathOperator{\supp}{supp}
\newcommand{\Cc}{C_{\mathsf{c}}}
\newcommand{\Cz}{C^{}_{0}}
\newcommand{\WAP}{\mathcal{W\nts A\ts P}}
\newcommand{\SAP}{\mathcal{S\nts A \ts P}}
\newcommand{\myfrac}[2]{\frac{\raisebox{-2pt}{$#1$}}
      {\raisebox{0.5pt}{$#2$}}}
\begin{document}

\title[Doubly sparse measures]
{Pure point measures with sparse support\\[2mm]
 and sparse Fourier--Bohr support}

\author{Michael Baake}
\address{Fakult\"at f\"ur Mathematik, Universit\"at Bielefeld, \newline
\hspace*{\parindent}Postfach 100131, 33501 Bielefeld, Germany}
\email{mbaake@math.uni-bielefeld.de }

\author{Nicolae Strungaru}
\address{Department of Mathematical Sciences, MacEwan University, \newline
\hspace*{\parindent}10700 \ts 104 Avenue, Edmonton, AB, Canada T5J 4S2}
\email{strungarun@macewan.ca}

\author{Venta Terauds}
\address{Discipline of Mathematics, 
University of Tasmania, \newline
\hspace*{\parindent}Private Bag 37, Hobart, TAS 7001, Australia}
\email{venta.terauds@utas.edu.au}

\subjclass{43A05, 52C23}

\begin{abstract}
  Fourier-transformable Radon measures are called doubly sparse when
  both the measure and its transform are pure point measures with
  sparse support. Their structure is reasonably well understood in
  Euclidean space, based on the use of tempered distributions.  Here,
  we extend the theory to second countable, locally compact Abelian
  groups, where we can employ general cut and project schemes and the
  structure of weighted model combs, along with the theory of almost
  periodic measures. In particular, for measures with Meyer set
  support, we characterise sparseness of the Fourier--Bohr spectrum
  via conditions of crystallographic type, and derive representations
  of the measures in terms of trigonometric polynomials.  More
  generally, we analyse positive definite, doubly sparse measures in a
  natural cut and project setting, which results in a Poisson
  summation type formula.
\end{abstract}

\maketitle

\section{Introduction}

The study of translation-bounded, but possibly unbounded, measures on
a locally compact Abelian group (LCAG) $G$, with methods from harmonic
analysis, has a long history; compare \cite{ARMA1,BF,HR, ARMA}.  
{Of particular interest are Fourier-transformable measures 
$\mu$ such that both $\mu$ and $\widehat{\mu}$ are \emph{sparse},
which means that both are pure point measures and have locally finite
support.} The best-known example for this type of measure is the
uniform Dirac comb \cite{Cordoba} of a general lattice
$\vG\subset \RR^d$, which we write as
$\delta^{}_{\nts \vG} = \sum_{x\in\vG} \delta^{}_{x}$. This measure is
doubly sparse due to the Poisson summation formula (PSF),
\begin{equation}\label{eq:PSF}
     \widehat{\delta^{}_{\nts \vG}} \, = \, \dens (\vG) \,
     \delta^{}_{\nts \vG^0}  \ts ,
\end{equation}
where $\vG^0$ denotes the dual lattice of $\vG$; see
\cite[Sec.~9.2]{TAO1} and references therein for background.

The understanding of such measures, and translation-bounded measures
and their transforms in general, has reached a reasonably mature state
for $G=\RR^d$, where they arise in the study of quasicrystals. Here,
Meyer's pioneering work on model sets \cite{Mey72,Mey94} plays a key
role; see \cite{Moo97, Moo00, BLM} for a detailed account, and
\cite[Ch.~9]{TAO1} for an exposition of their appearance in
diffraction theory.  {Though model sets typically lead to
  diffraction measures with dense support, the methods from this field
  provide immensely useful tools for the questions at hand.}
{In particular, we will be able to classify, in
  Theorem~\ref{thm:trig-coeffs}, the few cases of
  Fourier-transformable measures that are supported on cut and project
  sets and have a sparse Fourier transform.}  While the natural
setting of tempered distributions simplifies the harmonic analysis in
this case significantly, {and powerful complex-analytic
  techniques may be applied,} several interesting open problems
remain.  We particularly mention those collected and stated by
Lagarias \cite{Jeff-rev}, some of which have recently been answered by
{Kellendonk and Lenz \cite{KL}, by Favorov \cite{Fav}, and
  by} Lev and Olevskii \cite{LO1,LO2}.

In this paper, we substantially extend the setting and consider
\emph{doubly sparse} measures on an LCAG $G$ that is also second
countable, hence $\sigma$-compact and metrisable.  By a doubly sparse
measure we mean a Fourier-transformable measure $\mu$ such that both
$\supp (\mu)$ and $\supp (\widehat{\mu})$ are locally finite point
sets {(satisfying an upper density condition as detailed in
  Section~\ref{sec:sparse})} in $G$ and $\widehat{G}$,
respectively. In particular, both $\mu$ and $\widehat{\mu}$ must be
pure point measures. Beyond the lattice Dirac comb in \eqref{eq:PSF},
{other notions and examples of doubly sparse measures have
  been studied} in \cite{LO1,LO2,Meyer} under the name `crystalline
measures'. We do not adopt this term because it has a
different meaning elsewhere. {Note that some of the
  measures appearing in these papers are not doubly sparse in our
  sense, and do not seem to be compatible with the cut and project
  formalism, which makes them unsuitable for our tools.}

In this wider generality, we can no longer work with tempered
distributions, but need {an extension that is suitable
for LCAGs. While one option could be Bruhat--Schwartz theory, 
compare \cite{Osb} and references therein, it seems more natural to us}
to employ the general theory of Radon measures
on locally compact Abelian groups. {A
large body of results on such measures has
accumulated in recent years, due to the systematic development of
the theory of aperiodic order, including the cut and project scheme
for measures and their Fourier transforms. We will make extensive
use of some of the recent results; these, to our knowledge, have
no counterpart yet in Bruhat--Schwartz space. Moreover,
we shall employ the connection between Fourier transform and almost 
periodicity for measures and tempered distributions \cite{ST}.}
Since the measures under consideration need not be finite, the notion of
transformability is non-trivial; see \cite[Ch.~4.9]{TAO2} for a
detailed exposition and \cite{ARMA, BF,HR} for background.

{The measures of interest will often display a high degree of
long-range translational order. Thus,} we may profit from the methods
developed in \cite{BM}, which have recently been systematised and
extended in \cite{NS11}.  In particular, we shall need almost periodic
measures of various kinds that emerge from a \emph{cut and project
  scheme} (CPS) in the sense that they are supported on a projection
set with certain properties; see \cite[Ch.~7]{TAO1} for an
introduction, and \cite{Mey72,Moo97,Moo00} for the general theory and
more advanced topics.  

{In the particular case $G = \RR^d$, a natural question is whether 
one could get more general results via the theory of tempered distributions. 
It turns out that for a large class of measures, which includes the typical 
examples we are interested in, the Fourier theory of Radon measures 
and that of tempered distributions coincide; see Lemma~\ref{lem:FT-able}
for details.}  \vspace*{3mm}

The paper is organised as follows. We recall various concepts and
preliminaries in Section~\ref{sec:prelim}, followed by
Section~\ref{sec:sparse} on the notion and basic properties of sparse
point sets in LCAGs. Then, we look more closely at Radon measures with
Meyer set support in Section~\ref{sec:meyer}, which contains two of
our central results, namely Theorems~\ref{thm:conj1} and
\ref{thm:trig-coeffs}. They assert that such measures exhibit the
following dichotomy: Either $\mu$ and $\widehat{\mu}$ are supported on
fully periodic sets, or $\widehat{\mu}$ meets the translates of any
open set in unboundedly many points.

Then, in Section~\ref{sec:sap}, we consider positive definite measures
with uniformly discrete support and sparse Fourier--Bohr spectrum.  In
particular, we show that any such measure is norm-almost periodic and
thus permits a representation in a natural cut and project scheme; see
Theorem~\ref{thm:T2} and Corollary~\ref{coro:posdef}. This also allows
us to express $\widehat{\mu}$ in terms of a PSF-type formula and to
discuss the connection with diffraction theory.  Finally, in
Section~\ref{sec:real}, we put our results in perspective with
previous results of Lev and Olevskii \cite{LO1,LO2} by considering
measures supported on $\RR^d$, including those arising from fully
Euclidean cut and project schemes.

\section{Notation and Preliminaries}\label{sec:prelim}

Below, we use the general setting of the monograph \cite{TAO1}, and
refer to \cite[Chs.~4 and 5]{TAO2} for background on the Fourier
theory of Radon measures on LCAGs. {From now on,
unless stated otherwise, the term `measure' will refer
to a (generally complex) Radon measure.}

We assume an LCAG $G$ to be equipped with its Haar measure
$\theta^{}_{G}$ in a suitable normalisation.  This means that we
arrange $\theta^{}_{G}$ and $\theta^{}_{\widehat{G}}$, where
$\widehat{G}$ is the Pontryagin dual of $G$, relative to each other in
such a way that Parseval's equation holds. In particular, we shall use
Lebesgue measure on $\RR^d$ and counting measure on $\ZZ^m$, while the
Haar measure will usually be normalised for compact groups. As a
consequence, the Haar measure on a finite discrete group will be
counting measure divided by the order of the group. For a measurable
set $A \subseteq G$, we will often write $\vol (A)$ instead of
$\theta^{}_{G} (A)$ and $\dd x$ as a shorthand for
$\dd \theta^{}_{G} (x)$, if the reference to $G$ is
unambiguous. Below, we will be concerned with certain point sets in
$G$, where the term \emph{point set} refers to an at most countable
union of singleton sets.

When $G$ is an LCAG and $g\in L^1 (G)$, we write the \emph{Fourier
  transform} of $g$ as
\[
     \widehat{g} (\chi) \, = \int_{G}
     \overline{\chi(x)} \, g(x) \dd x \ts ,
\]
where $\chi \in \widehat{G}$ is a continuous character, with
$\overline{\chi} = \chi^{-1}$.  Likewise, the matching inverse
transform is given by
$\widecheck{g} (\chi) = \int_{G} \chi (x) \ts g(x) \dd x$.  In this
formulation, $\widehat{G}$ is written multiplicatively. This has to be
compared with the widely used additive notation for $G=\RR^d$, where
one writes $\chi (x) = \chi^{}_{k} (x) = \ee^{2 \pi \ii k x}$ with
$k\in\RR^d$. Here, and in similar situations such as the $d$-torus, we
then write
$\widehat{g} (k) = \int_{G} \overline{\chi^{}_{k} (x)} \ts g(x) \dd x$
with $k\in \widehat{G}$, now written additively. {From here, we take
the usual route to define the Fourier transform of finite measures,
and the notion of Fourier transformability of Radon measures, as in
\cite[Def.~4.9.7]{NS11}.}

A \emph{van Hove sequence} $\cA = \{ A_n \}$ in $G$ is a sequence of
compact sets $A_n \subseteq G$ that are nested and exhaustive, meaning
$A^{}_n \subseteq A^{\circ}_{n+1}$ together with $\bigcup_n A_n = G$,
and also satisfy the asymptotic condition
\[
      \lim_{n\to\infty}  \frac{\theta^{}_{G} (\partial^K \! A_n)}
      {\theta^{}_{G} (A_n)} \, = \, 0 
\]
for any compact $K\subseteq G$. Here, for compact $K$ and $A$, the
\emph{$K$-boundary of $A$} is defined as
\begin{equation}\label{eq:K-boundary}
    \partial^K \! A \, \defeq \, \bigl( \ts 
    \overline{ (A+K) \setminus A} \ts \bigr)
    \cup \bigl( A \cap ( \ts \overline{G\setminus A}
      -K \ts) \bigr) ,
\end{equation}
where $A\pm K \defeq \{ a \pm k : a \in A, k \in K \}$ denotes the
Minkowski sum and difference of the two sets $A$ and $K$. In
particular, for all compact $K\subseteq G$, one has
\begin{equation}\label{eq:K-bound-inclusion}
A+K \subseteq A \cup \partial^K \! A\ts .
\end{equation}
The nestedness condition implies that $\bigcup_n A^{\circ}_{n+1}$ is
an open cover of $G$, and hence of any compact set $K\subseteq G$.
Consequently, $K\subseteq \bigcup_{n\in F} A^{\circ}_{n+1}$ for some
finite set $F \subset \NN$, which means $K\subseteq A_m$ for all
sufficiently large $m$.

Note that van Hove sequences of the type defined here do exist in all
$\sigma$-compact LCAGs; see \cite[p.~145]{Martin2}.  In fact, since we
included nestedness and exhaustion of $G$ into our definition of a van
Hove sequence, the existence of such sequences becomes equivalent to
$\sigma$-compactness of $G$. One can go beyond this situation, but we
do not attempt that here.

For the induced continuous translation action of $G$ on functions and
measures, we start from the relation
$\bigl( T^{}_{t} \ts g \bigr) (x) = g (x-t)$ for functions. The
matching definition for measures is
\[
    \bigl( T^{}_{t} \ts \mu \bigr) (g) \, = \,
    \mu ( T^{}_{-t} \, g )
\]
for test functions $g \in C_{\mathsf{c}} (G)$. The convolution is
defined as usual, and one checks that
\begin{equation}\label{eq:conv-1}
   (T^{}_{t} \ts \mu ) * g \, = \, T^{}_{t} (\mu*g) \ts ,
\end{equation}
which makes the notation $T^{}_{t} \ts\mu*g$ unambiguous.
In particular, one finds
\begin{equation}\label{eq:conv-2}
   \bigl( T^{}_{t} \ts \mu * g \bigr) (y)
   \, = \, \bigl( \mu*g \bigr) (y-t) \ts .
\end{equation}

Let $G$ be a fixed LCAG.  Recall that a measure $\mu$ on $G$ is called
\emph{translation bounded} {if
\begin{equation}\label{eq:def-norm}
  \| \mu \|^{}_{E} \, \defeq \,
  \sup_{x\in G} \lvert \mu \rvert (x+E) \, < \, \infty
\end{equation}
holds} for any compact set $E$. One can equivalently demand
that $\mu \ast g$ be a bounded function for all
$g \in C_{\mathsf{c}} (G)$; see \cite[Sec.~1]{Martin2} for the case
that $G$ is $\sigma$-compact, and \cite[Thm.~1.1]{ARMA1} as well as
\cite[Prop.~4.9.21]{MoSt} for the general case.  We denote the set of
translation-bounded measures by $\cM^{\infty} (G)$, which will show up
many times below.

\section{Sparse sets}\label{sec:sparse}

For the remainder of the paper, unless stated otherwise, $G$ will
stand for a second-countable LCAG, and $\widehat{G}$ for its dual
group. {We generally need second countability of $G$
  to define doubly sparse measures on $G$, and will 
explicitly mention when our setting can be extended.}
Recall that a topological group $G$ is \emph{second-countable} 
if there exists a countable basis for its topology. A second countable
group $G$ is both $\sigma$-compact and metrisable, which means that
$\widehat{G}$ has the same properties \cite[Thm.~4.2.7]{Reiter}.

If $\mu$ is a transformable measure on $G$, we call the measurable
support of $\widehat{\mu}$ the \emph{Fourier--Bohr support} of $\mu$,
and abbreviate it as FBS from now on. In some papers
\cite{LO1,LO2,Meyer}, the FBS is also called the \emph{spectrum} or
the Fourier--Bohr spectrum of $\mu$. Below, we will not adopt this
terminology because the term \emph{spectrum} is already in use in
several ways in related questions from dynamical systems and ergodic
theory.

\subsection{General notions and properties}

Given a point set $\vL \subseteq G$ and a van Hove sequence
$\cA=\{ A_n \}$ in $G$, we define the \emph{upper density} and the
\emph{uniform upper density} of $\vL$ with respect to $\cA$ to be
\[
\begin{split}
   \udens^{}_{\cA}(\vL) \, & \defeq \,
    \limsup_{n \to \infty} \, 
    \frac{\card ( \vL \cap A_n )} {\vol (A_n)} 
    \qquad \text{and} \\[2mm]
    \uudens^{}_{\cA}(\vL) \, & \defeq \,
    \limsup_{n \to \infty} \, \sup_{x \in G} 
    \frac{\card \bigl( \vL \cap (x+ \nts A_n)\bigr)}
         {\vol (A_n)} \ts ,
    \end{split}
\]
respectively, and similarly for the lower densities, then denoted as
$\ldens^{}_{\cA} (\vL)$ and $\uldens^{}_{\cA} (\vL)$, with $\limsup$
and $\sup$ replaced by $\liminf$ and $\inf$, respectively. When the
lower density of a point set $\vL$ agrees with its upper density, the
\emph{density} of $\vL$ with respect to $\cA$ exists, and is denoted
as $\dens^{}_{\cA} (\vL)$.  The \emph{total uniform upper density}
refers to
\[
   \uudens(\vL) \, \defeq \, \sup \big\{ \uudens^{}_{\cA}(\vL)
    : \cA \mbox{ is a van Hove sequence} \big\} ,
\]
again with the matching definition for $\uldens (\vL)$.

Let us add a comment on these notions.  When a point set $\vL$ has a
finite uniform upper density with respect to \emph{some} van Hove
sequence $\cA$, it actually has finite uniform upper density with
respect to \emph{all} van Hove sequences and, furthermore, the
supremum over all of these is finite; see Lemma~\ref{lem:L1} and
Remark~\ref{rem:not-ud} below.  In contrast, a point set may have
finite upper density with respect to some van Hove sequence, but
infinite upper density with respect to another; see
Example~\ref{ex:wud-sparse}. For this reason, we do not consider the
concept of total upper density, and we define sparseness with respect
to a particular van Hove sequence in $G$.

{The uniform density is sometimes called 
\emph{upper Banach density}. When $G$ is a discrete LCAG, this
density does not depend on the choice of the F{\o}lner sequence
\cite{DHZ}. One thus has $\uudens ( \vL) = \uudens^{}_{\cA} (\vL)
\leqslant 1$ for all $\vL$ and every F{\o}lner sequence $\cA$ in $G$.
The situation seems to be more complicated in non-discrete groups.}

\begin{definition}\label{def:sparse}
  Given a van Hove sequence $\cA = \{ A_n \}$ in $G$, a point set
  $\vL \subseteq G$ is called \emph{$\cA$-sparse} if
  $\udens^{}_{\cA}(\vL) < \infty$, and \emph{strongly $\cA$-sparse} if
  $\uudens^{}_{\cA} (\vL) < \infty$.  Moreover, $\vL$ is
  \emph{strongly sparse} if it is strongly $\cA$-sparse for every van
  Hove sequence $\cA$ in $G$.
\end{definition}

\begin{remark}\label{rem:loc-fin}
  If a point set $\vL\subseteq G$ is $\cA$-sparse for some van Hove
  sequence $\cA = \{ A_n \}$ in $G$, it is automatically locally
  finite.  Indeed, if $K\subseteq G$ is any compact set, there is some
  $A_n$ in $\cA$ with $K\subseteq A_n$, and one has
\[
    \card (\vL \cap K ) \, \leqslant \,
    \card (\vL \cap A_n ) \, < \, \infty 
\]   
due to $\cA$-sparseness. Local finiteness of $\vL$ is then clear,
which equivalently means that $\vL$ is discrete and closed; compare
\cite[Sec.~2.1]{TAO1}. \exend
\end{remark}

Next, we need to recall a notion that is slightly weaker than uniform
discreteness, {where a point set $\vL\in G$
is \emph{uniformly discrete} if some open neighbourhood $U$ of $0$ in $G$
exists such that, for any two distinct points $x,y\in \vL$, one has
$(x+U) \cap (y+U) = \varnothing$.}

\begin{definition}\label{def:wud}
  A point set $\vL\subseteq G$ is called \emph{weakly uniformly
    discrete} if, for each compact $K\subseteq G$ and all $x \in G$,
  $\card \bigl(\vL \cap ( x + \nts K \ts )\bigr)$ is bounded by a 
  constant that depends only on $K$.
\end{definition}
  
Weak uniform discreteness of $\vL$ is equivalent to
$\delta^{}_{\!\vL}$ being a translation-bounded measure; compare
\cite[p.~288]{NS11} as well as \cite[Sec.~1]{Martin2}.  Note also that
strong $\cA$-sparseness clearly implies $\cA$-sparseness, but not vice
versa. {Let us illustrate these connections as follows.}

\begin{example}\label{ex:wud-sparse}
  Consider the point set $\vL \subset \RR$ defined as
\[
  \vL \, = \, \bigcup_{n \in \NN}  \big\{  n + \myfrac{k}{n} : 
      0 \leqslant k < n \big\} .
\] 
The set $\vL$ fails to be weakly uniformly discrete because
$\card \bigl( \vL \cap (n + [0,1])\bigr) =n$ is unbounded.  For the
same reason, $\vL$ cannot be strongly $\cA$-sparse, as any van Hove
sequence $\cA=\{ A_n \}$ in $\RR$ has the property that the compact
sets $A_n$ contain a translate of $[0,1]$ for all sufficiently large
$n$, so $\uudens^{}_{\cA} (\vL) = \infty$, and thus also
$\uudens (\vL) = \infty$.

However, $\vL$ can still be $\cA$-sparse for certain van Hove
sequences. In general, the density with respect to a given van Hove
sequence need not be zero, but can take any value $\geqslant 0$, even
including $\infty$.  Indeed, choosing $A_n$ as $[-n^3,n]$,
$[-\alpha n^2,n]$ with $\alpha > 0$, or $[-n,n^2]$, one gets
$\cA$-density $0$, $\frac{1}{2\ts \alpha}$, or $\infty$, respectively.
\exend
\end{example}

\begin{lemma}\label{lem:L1} 
  If $\vL \subseteq G$ is weakly uniformly discrete, one has
\[
    \sup  \big\{  \udens^{}_{\cA} (\vL) : \cA
    \text{ is van Hove in } G \big\} \, \leqslant \:
    \uudens (\vL) \, < \, \infty \ts .
\]   
\end{lemma}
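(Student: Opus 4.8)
The plan is to establish the two assertions separately, since the left inequality is essentially formal while all the substance lies in the finiteness statement. For any van Hove sequence $\cA=\{A_n\}$, taking $x=0$ in the supremum that defines $\uudens^{}_{\cA}$ gives $\card(\vL\cap A_n)\leqslant\sup_{x\in G}\card(\vL\cap(x+A_n))$, so after dividing by $\vol(A_n)$ and passing to the limsup one obtains $\udens^{}_{\cA}(\vL)\leqslant\uudens^{}_{\cA}(\vL)\leqslant\uudens(\vL)$, the last step being the definition of the total uniform upper density. Taking the supremum over $\cA$ on the left yields the first inequality. The entire content of the lemma thus sits in showing $\uudens(\vL)<\infty$, and the decisive feature is that I must produce a bound that does \emph{not} depend on the van Hove sequence.

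For the finiteness, first I would fix a relatively compact, symmetric open neighbourhood $U$ of $0$ in $G$; then $\vol(U)>0$ (nonempty open sets have positive Haar measure) and $K\defeq\overline{U+U}$ is compact. By weak uniform discreteness (Definition~\ref{def:wud}), the constant $c\defeq\sup_{x\in G}\card\bigl(\vL\cap(x+K)\bigr)$ is finite. The core is a packing--covering estimate: for any compact $A\subseteq G$, I choose a maximal subset $\{a_1,\dots,a_N\}$ of $A$ for which the translates $a_i+U$ are pairwise disjoint (this set is finite by the volume bound below). Maximality forces $A\subseteq\bigcup_i(a_i+U+U)$, using symmetry of $U$, while disjointness together with $\bigcup_i(a_i+U)\subseteq A+U$ gives $N\ts\vol(U)\leqslant\vol(A+U)$, hence $N\leqslant\vol(A+U)/\vol(U)$. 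Counting the points of $\vL$ patch by patch, and noting $a_i+U+U\subseteq a_i+K$ so that each patch contains at most $c$ points of $\vL$, one finds $\card(\vL\cap A)\leqslant\sum_i\card\bigl(\vL\cap(a_i+U+U)\bigr)\leqslant Nc$, and therefore
\[
    \card(\vL\cap A)\,\leqslant\,\frac{c}{\vol(U)}\,\vol(A+U)\ts .
\]

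Applying this with $A=x+A_n$ and using translation invariance of $\theta^{}_{G}$ to replace $\vol(x+A_n+U)$ by $\vol(A_n+U)$, the bound becomes uniform in $x$. The final step is to feed in the van Hove property: by \eqref{eq:K-bound-inclusion} with compact set $\overline{U}$ one has $A_n+U\subseteq A_n+\overline{U}\subseteq A_n\cup\partial^{\overline{U}}\!A_n$, so $\vol(A_n+U)/\vol(A_n)\leqslant 1+\vol(\partial^{\overline{U}}\!A_n)/\vol(A_n)\to 1$. Dividing the displayed inequality by $\vol(A_n)$, taking the supremum over $x$ and then the limsup over $n$ yields $\uudens^{}_{\cA}(\vL)\leqslant c/\vol(U)$. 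Since the right-hand side depends only on $\vL$ and the fixed neighbourhood $U$, and not on $\cA$, taking the supremum over all van Hove sequences gives $\uudens(\vL)\leqslant c/\vol(U)<\infty$. The main obstacle is precisely this uniformity: one must arrange the covering estimate so that the constant $c/\vol(U)$ is manufactured once and for all from $\vL$, and then verify that the geometric correction factor $\vol(A_n+U)/\vol(A_n)$ is controlled by the van Hove condition simultaneously for every sequence rather than sequence by sequence.
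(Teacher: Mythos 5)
Your argument is correct, and its overall skeleton (the trivial first inequality; a bound $\card\bigl(\vL\cap(x+A_n)\bigr)\leqslant C'\,\vol(A_n+K')$ uniform in $x$ with constants independent of $\cA$; then Eq.~\eqref{eq:K-bound-inclusion} and the van Hove property) matches the paper's. But the way you manufacture the uniform bound is genuinely different. The paper exploits the equivalence between weak uniform discreteness of $\vL$ and translation boundedness of the Dirac comb $\delta^{}_{\!\vL}$: it fixes a non-negative $f\in C_{\mathsf{c}}(G)$ with $\int_G f(x)\dd x=1$, sets $C=\|f*\delta^{}_{\!\vL}\|^{}_{\infty}<\infty$, and converts the point count into an integral via Fubini, obtaining $\card\bigl(\vL\cap(x+A_n)\bigr)\leqslant C\,\vol(A_n+K)$ with $K=\supp(f)$. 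You instead run an elementary packing--covering argument: a maximal family of disjoint translates $a_i+U$ inside $x+A_n$ has at most $\vol(A_n+U)/\vol(U)$ members, its doubled patches $a_i+U+U$ cover $x+A_n$ by maximality and symmetry of $U$, and each patch holds at most $c$ points of $\vL$ directly by Definition~\ref{def:wud}. What your route buys is self-containedness: you use only the definition of weak uniform discreteness, translation invariance, and positivity of Haar measure on non-empty open sets, whereas the paper must invoke the equivalence with translation boundedness of $\delta^{}_{\!\vL}$ (citing \cite{NS11,Martin2}) together with Fubini for the measure $\delta^{}_{\!\vL}$. What the paper's route buys is brevity within its own framework: translation-bounded measures and convolutions such as $f*\delta^{}_{\!\vL}$ are the running currency of the paper, so the estimate drops out in a few lines and reuses machinery needed elsewhere (for instance in Remark~\ref{rem:not-ud}). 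Both proofs conclude identically, with a bound that does not depend on the van Hove sequence, which is indeed the decisive point of the lemma.
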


  {In \cite[Lemma 9.2]{LR1}, the authors prove
    this result for the larger class of translation-bounded
    measures (compare also with \cite[Lemma~1.1]{Martin2}).
    Here, we prefer to give an independent argument as follows.}
\begin{proof}
    Observe first that
  $\udens^{}_{\cA} (\vL) \leqslant \uudens^{}_{\cA} (\vL)$ obviously
  holds for any van Hove sequence $\cA$ in $G$, hence also
  $\udens^{}_{\cA} (\vL) \leqslant \uudens (\vL)$ for all $\cA$, and
  the first inequality is clear.  It remains to show that there is a
  constant $C<\infty$ with $\uudens (\vL) \leqslant C$.
  
  Select some non-negative $f \in C_{\mathsf{c}} (G)$ with
  $\theta^{}_{G} (f) = \int_G f(x) \dd x = 1$, and set
  $K = \supp (f)$.  Since $\vL$ is weakly uniformly discrete, the
  Dirac comb $\delta^{}_{\! \vL}$ is translation bounded, and
  $f \nts * \delta^{}_{\! \vL}$ is a non-negative continuous function
  that is bounded. We thus have
  $C \defeq \| f \nts * \delta^{}_{\! \vL} \|^{}_{\infty} < \infty$
  and
  $0 \leqslant \bigl( f \nts * \delta^{}_{\! \vL} \bigr) (x) \leqslant
  C$ for all $x\in G$.
  
  Let $\cA$ be any van Hove sequence in $G$. Then, using 
  Fubini, we can estimate
\[
\begin{split}
    \card & \bigl( \vL \cap (x+A_n ) \bigr) \,  = \int_G \int_G 
    f(t) \dd t \, 1^{}_{x+A_n} (s) \dd \delta^{}_{\! \vL} (s) \\[2mm]
    & = \int_G \int_G f(t-s) \dd t \, 1^{}_{x+A_n} (s) 
    \dd \delta^{}_{\! \vL} (s) \, = \int_G \int_G f(t-s) \, 
    1^{}_{x+A_n} (s) \dd \delta^{}_{\! \vL} (s) \dd t \ts .
\end{split}
\]  
  Now, observe that $f(t-s) \, 1^{}_{x+A_n} (s) =0$ unless
  $t\in x+A_n +K$, hence
\[
   0 \, \leqslant \, f(t-s) \, 1^{}_{x+A_n} (s) \, = \, 
   f(t-s) \, 1^{}_{x+A_n} (s) \, 1^{}_{x+A_n +K} (t)
   \, \leqslant \, f(t-s) \, 1^{}_{x+A_n +K} (t) \ts ,
\]  
and we get
\begin{equation}\label{eq:last-step}
\begin{split}
    \card & \bigl( \vL \cap (x+A_n ) \bigr) \,  \leqslant
    \int_G \int_G f(t-s) \, 1^{}_{x+A_n +K} (t) 
    \dd \delta^{}_{\! \vL} (s) \dd t \\[2mm]
    &=  \int_G 1^{}_{x+A_n +K} (t) \int_G f(t-s) 
    \dd \delta^{}_{\! \vL} (s) \dd t
    \, = \int_G 1^{}_{x+A_n +K} (t) \, \bigl(f*\delta^{}_{\! \vL} 
    \bigr) (t) \dd t \\[4mm]
    & \leqslant \, C \, \vol(x+A_n +K) \, = \, C \, \vol(A_n +K)
    \, \leqslant \, C \bigl( \vol (A_n) + 
    \vol (\partial^K \! A_n) \bigr) ,
\end{split}
\end{equation}
independently of $x$, with the last step following from
Eq.~\eqref{eq:K-bound-inclusion}. Consequently, we have
\[
     \sup_{x\in G} \frac{\card \bigl( \vL \cap (x+A_n)\bigr)}
     {\vol (A_n)} \, \leqslant \, C \left( 1 +
     \frac{\vol (\partial^K \! A_n)}{\vol (A_n)} \right) ,
\]
where $n$ is arbitrary. Hence, by the van Hove property,
$\uudens^{}_{\cA} (\vL) \leqslant C$. Since this bound does not depend
on $\cA$, our claim follows.
\end{proof}

\begin{remark}\label{rem:not-ud} 
  {When $\vL \subseteq G$ is a point set that violates weak
    uniform discreteness, one gets $\uudens_{\cA}(\vL)=\infty$, for
    any van Hove sequence $\cA$.  Indeed,} the sets $A_n$ are compact,
  and we may, without loss of generality, assume that all of them have
  non-empty interior. For any $n\in\NN$, this implies
\[
      \| \delta^{}_{\! \vL} \|^{}_{A_n} \, = \;
      \sup_{x\in G} \, \lvert \delta^{}_{\! \vL} \rvert (x + A_n)
      \, = \, \infty \ts ,
\]
which really is a statement in the norm topology \cite{BM};
compare \cite[Eq.~(5.3.1)]{NS11}. This property means that 
\[ 
    \sup_{x \in G}
    \frac{\card \bigl( \vL \cap (x+ \nts A_n)\bigr)} 
    {\vol (A_n)} \, = \, \infty 
\]
and hence $ \uudens^{}_{\cA}(\vL) = \infty$.    \exend
\end{remark}

{Under the conditions of 
Remark~\ref{rem:not-ud}, for any van Hove sequence $\cA$,
there is a sequence $\{ t_n \}$ of translations such that
$\card \bigl( \vL \cap (t_n + A_n ) \bigr) /\vol (A_n) > n$, which is
unbounded. However, this does not imply $ \udens^{}_{\cA}(\vL) =
\infty$, as Example~\ref{ex:wud-sparse} shows.}
Also, Lemma~\ref{lem:L1} and Remark~\ref{rem:not-ud} imply the
following: If $\uudens^{}_{\cA}(\vL)< \infty$ holds for \emph{some} 
van Hove sequence $\cA$, the same estimate holds for \emph{all} 
van Hove sequences. We can now strengthen the relations as follows.

\begin{theorem}\label{thm:equivalences}
  For a point set\/ $\vL\subseteq G$, the following properties are
  equivalent.
\begin{enumerate}\itemsep=2pt
   \item $\vL$ is weakly uniformly discrete.
   \item $\vL$ is strongly sparse.
   \item One has\/ $\uudens (\vL) < \infty$.
   \item One has\/ $\uudens^{}_{\nts\cA} (\vL) < \infty$ for some 
       van Hove sequence\/ $\cA$.
\end{enumerate}   
\end{theorem}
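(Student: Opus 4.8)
The plan is to establish the four equivalences by proving a cycle of implications, which is the cleanest way to organise the argument given the tools already at hand. I would prove $(1) \Rightarrow (3) \Rightarrow (4) \Rightarrow (2) \Rightarrow (1)$, or an equivalent cyclic arrangement, so that each arrow is a short step. The key observation is that two of the four arrows are essentially free: the implication $(3) \Rightarrow (4)$ is trivial, since $\uudens(\vL)<\infty$ is the supremum over \emph{all} van Hove sequences and hence bounds $\uudens^{}_{\nts\cA}(\vL)$ for any particular $\cA$; and $(2) \Rightarrow (4)$ follows directly from the definition of strong sparseness, which demands strong $\cA$-sparseness for \emph{every} van Hove sequence, in particular for some fixed one.

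The substantive content is already packaged in the results preceding the theorem. First, $(1) \Rightarrow (3)$ is precisely Lemma~\ref{lem:L1}: weak uniform discreteness gives $\uudens(\vL) \leqslant C < \infty$. For the reverse direction, I would use Remark~\ref{rem:not-ud}, which establishes the contrapositive of $(4) \Rightarrow (1)$: if $\vL$ violates weak uniform discreteness, then $\uudens^{}_{\nts\cA}(\vL) = \infty$ for \emph{any} van Hove sequence $\cA$. Taking the contrapositive, if $\uudens^{}_{\nts\cA}(\vL) < \infty$ for some $\cA$, then $\vL$ must be weakly uniformly discrete, which is exactly $(4) \Rightarrow (1)$. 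Thus the circle closes.

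Concretely, I would argue as follows. We have $(4) \Rightarrow (1)$ by the contrapositive in Remark~\ref{rem:not-ud}. Next, $(1) \Rightarrow (3)$ is the conclusion $\uudens(\vL) < \infty$ of Lemma~\ref{lem:L1}. The implication $(3) \Rightarrow (4)$ is immediate, since $\uudens^{}_{\nts\cA}(\vL) \leqslant \uudens(\vL)$ for every van Hove sequence $\cA$. It remains to weave in $(2)$. For $(3) \Rightarrow (2)$: assuming $\uudens(\vL) < \infty$, every van Hove sequence $\cA$ satisfies $\uudens^{}_{\nts\cA}(\vL) \leqslant \uudens(\vL) < \infty$, so $\vL$ is strongly $\cA$-sparse for every $\cA$, which is the definition of strong sparseness. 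Finally, $(2) \Rightarrow (4)$ is trivial, as noted. Chaining these, all four properties are equivalent.

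I do not expect a genuine obstacle here, precisely because the analytic heart of the matter—the Fubini estimate controlling $\card(\vL \cap (x+A_n))$ by a translation-bounded convolution in Lemma~\ref{lem:L1}, and the norm-topology blow-up in Remark~\ref{rem:not-ud}—has already been carried out. The only point requiring mild care is bookkeeping: one must make sure the implications genuinely form a closed cycle touching all four statements, rather than collapsing into a chain that omits one of them. In particular, I would double-check that $(2)$ is linked both forward (to $(4)$) and backward (from $(3)$), so that it is not left dangling. The quantifiers also deserve attention: the distinction between ``for some van Hove sequence'' in $(4)$ and ``for every van Hove sequence'' in $(2)$ is exactly what makes the equivalence interesting, and the whole theorem hinges on the fact, established by Lemma~\ref{lem:L1} together with Remark~\ref{rem:not-ud}, that finiteness of $\uudens^{}_{\nts\cA}$ for one sequence forces it for all of them.
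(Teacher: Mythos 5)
Your proposal is correct and takes essentially the same route as the paper: the concrete argument you give closes exactly the cycle $(1) \Rightarrow (3) \Rightarrow (2) \Rightarrow (4) \Rightarrow (1)$ that the paper uses, with Lemma~\ref{lem:L1} supplying $(1) \Rightarrow (3)$, Definition~\ref{def:sparse} supplying $(3) \Rightarrow (2) \Rightarrow (4)$, and the contrapositive of Remark~\ref{rem:not-ud} supplying $(4) \Rightarrow (1)$. The extra arrow $(3) \Rightarrow (4)$ you include is redundant but harmless, and your initial mention of the arrangement $(1) \Rightarrow (3) \Rightarrow (4) \Rightarrow (2) \Rightarrow (1)$ is superseded by the correct cycle you actually carry out.
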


\begin{proof}
  (1) $\Rightarrow$ (3) follows from Lemma~\ref{lem:L1}, while (3)
  $\Rightarrow$ (2) $\Rightarrow$ (4) is an immediate consequence of
  Definition~\ref{def:sparse}. Finally, (4)
  $\Rightarrow$ (1) follows from Remark~\ref{rem:not-ud}.
\end{proof}

\subsection{Sparse cut and project sets}\label{sec:CPS}

Let us begin by briefly recalling the setting of a cut and project
scheme (CPS), which is based on \cite{Mey72,Moo97, Moo00}. A CPS
consists of two LCAGs, $G$ and $H$, together with a 
lattice\footnote{In an LCAG $G$, a lattice simply is a
discrete, co-compact subgroup.}
$\cL \subseteq G \nts\nts \times \! H$ and several mappings with
some specific conditions. This is denoted by the triple $(G, H, \cL)$
and usually summarised in a diagram as follows.
\begin{equation}\label{eq:CPS-1}
\renewcommand{\arraystretch}{1.2}\begin{array}{r@{}ccccc@{}l}
   & G & \xleftarrow{\:\; \pi^{}_{G} \;\: } 
      & G \nts\nts \times \nts\nts H & 
        \xrightarrow{\;\: \pi^{}_{H} \;\: } & H & \\
   & \cup & & \cup & & \cup & \hspace*{-1ex} 
   \raisebox{1pt}{\text{\footnotesize dense}} \\
   & \pi^{}_{G} (\cL) & \xleftarrow{\;\ts 1-1 \;\ts } & \cL & 
        \xrightarrow{ \qquad } &\pi^{}_{H} (\cL) & \\
   & \| & & & & \| & \\
   & L & \multicolumn{3}{c}{\xrightarrow{\qquad\qquad\;\,\star
       \,\;\qquad\qquad}} 
       &  {L_{}}^{\star\nts}  & \\
\end{array}\renewcommand{\arraystretch}{1}
\end{equation}
Here, the mapping
$(\cdot)^{\star} \! : \, L \xrightarrow{\quad} H$ is well defined; see
\cite{Moo97,Moo00} for a general exposition and \cite{TAO1} for
further details, in particular for the case of $G=\RR^d$, which we
call a \emph{Euclidean} CPS.  When also $H=\RR^n$, it is called
\emph{fully Euclidean}.

For some arguments, we also need the \emph{dual CPS}, denoted by
$(\widehat{G}, \widehat{H}, \cL^{0})$ and nicely explained in
\cite{Moo97}; see also \cite{Schr}. Here, $\widehat{G}$ and
$\widehat{H}$ are the dual groups, while $\cL^{0}$ is the annihilator
of $\cL$ from \eqref{eq:CPS-1}, and a lattice in
$\widehat{G\nts\nts \times \! H} \simeq \widehat{G} \nts\nts
\times \! \widehat{H}$. Diagrammatically, we get the following.
\begin{equation}\label{eq:CPS-2}
\renewcommand{\arraystretch}{1.2}\begin{array}{r@{}ccccc@{}l}
      & \widehat{G}
      & \xleftarrow{\:\; \pi^{}_{\widehat{G}} \;\: } 
      & \widehat{G} \nts\nts \times \! \widehat{H} & 
         \xrightarrow{\;\: \pi^{}_{\widehat{H}} \;\: }
      & \widehat{H} & \\
   & \cup & & \cup & & \cup & \hspace*{-1ex} 
   \raisebox{1pt}{\text{\footnotesize dense}} \\
    & \pi^{}_{\widehat{G}} (\cL^{0})
    & \xleftarrow{\;\ts 1-1 \;\ts } & \cL^{0} & 
        \xrightarrow{ \qquad } &\pi^{}_{\widehat{H}} (\cL^{0}) & \\
   & \| & & & & \| & \\
   & L^{0} & \multicolumn{3}{c}{\xrightarrow{\qquad\qquad\;\,\star
       \,\;\qquad\qquad}} 
       &  \bigl( L_{}^{0} \bigr)^{\star\nts}  & \\
\end{array}\renewcommand{\arraystretch}{1}
\end{equation}
Note that the existence of a $\star$-map in the dual CPS follows from
that in the original one, whence we use the same symbol for it, though
the mappings are, of course, different.

Recall that, once a CPS $(G, H, \cL)$ with its natural projections and
its $\star$-map is given, a \emph{cut and project set} is a set of the
form
\begin{equation}\label{eq:CPS-set}
   \oplam (U)  \, = \,
   \{ x\in \pi^{}_{G} (\cL) : x^{\star} \in U \}
   \, = \, \{ x\in L : x^{\star} \in U \}
\end{equation} 
for some coding set or \emph{window} $U \subseteq H$. When $U$ is
relatively compact with non-empty interior, $\oplam (U)$ is called a
\emph{model set}. Note that model sets are Meyer 
sets,\footnote{Recall that $\vL\subseteq G$ is a Meyer set if
it is relatively dense and if $\vL-\vL \subseteq \vL +F$ holds for
some finite set $F\subseteq G$. Another characterisation together
with further aspects will be discussed in Remark~\ref{rem:harvest}.} 
and that any Meyer set is a subset of a model set; see 
\cite[Thm.~5.7.8]{MoSt}. For a function $g$ on $H$ such that
\[
     \omega^{}_{g} \; \defeq \!\! \sum_{x\in\pi^{}_{G}(\cL)} 
       \!\! g (x^{\star}) \, \delta^{}_{x} 
\]
is a measure on $G$, we call $\omega_g$ a \emph{weighted Dirac comb}
for $(G, H, \cL)$; see \cite[Sec.~4.1]{CRS} for details. When the
support of $\omega^{}_{g}$ is a model set, we call it a \emph{weighted
  model comb}.

Recall that the density of a lattice, such as $\cL$ in
$G \nts\nts \times \! H$, exists uniformly, so does not depend on the
choice of a van Hove sequence.  We thus write $\dens (\cL)$ in this
situation.  Let us begin by proving a density formula for cut and
project sets with open sets as windows, which will be a key input for
many of our later computations. Here, we invoke and extend
\cite[Prop.~3.4]{HuRi}, which is a density formula for relatively
compact sets as windows {that is substantially based on
  \cite[Thm.~1]{Martin1}. We note that, while the point sets we employ
  in our results often fail to be model sets themselves, projection
  sets with unbounded windows of finite measure will play an important
  role in our arguments. Also, it is essential for the proofs to come
  that the windows need not be regular. There is quite some recent
  interest in the corresponding theory of weak model sets; compare
  \cite{Schr,BHS,KR2,KR1}.}

\begin{prop}\label{prop:sparse-CPS} 
  Let\/ $(G, H, \cL)$ be a CPS, let\/ $\cA$ be a van Hove sequence
  in\/ $G$, and let\/ $U \subseteq H$ be an open set. Then,
\[
   \theta^{}_{\nts H} (U) \, \leqslant \,  
   \frac{\udens^{}_{\nts\cA} \bigl(\oplam(U)\bigr)}
   {\dens (\cL)} \ts .
\]
In particular, if $\oplam(U)$ is\/ $\cA$-sparse, one has\/
$\theta^{}_{\nts H} (U) <\infty$.  
\end{prop}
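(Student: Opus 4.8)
The plan is to count the points of $\oplam (U)$ inside a van Hove set $A_n$ by lifting them to lattice points in $G \times H$. Since $x \in \oplam (U)$ means $x \in L$ with $x^{\star} \in U$, equivalently $(x, x^{\star}) \in \cL$ with second coordinate in $U$, one has the identity
\[
   \card \bigl( \oplam (U) \cap A_n \bigr) \, = \,
   \card \bigl( \cL \cap ( A_n \times U ) \bigr) ,
\]
so that, heuristically, this count should grow like $\dens (\cL) \, \theta^{}_{G} (A_n) \, \theta^{}_{\nts H} (U)$. The difficulty is that $U$ need not be relatively compact, and may even carry infinite measure, so the density formula for relatively compact windows from \cite[Prop.~3.4]{HuRi} cannot be applied to $U$ directly. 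I would therefore reduce to that case by an inner approximation of $U$.

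Concretely, by inner regularity of the Haar measure $\theta^{}_{\nts H}$ together with local compactness of $H$, one has
\[
   \theta^{}_{\nts H} (U) \, = \, \sup \bigl\{ \theta^{}_{\nts H} (V) :
   V \subseteq U \text{ open and relatively compact} \bigr\} ,
\]
since every compact $C \subseteq U$ can be enlarged to a relatively compact open $V$ with $C \subseteq V \subseteq \overline{V} \subseteq U$. The inclusion $V \subseteq U$ gives $\oplam (V) \subseteq \oplam (U)$, and thus $\card (\oplam (V) \cap A_n) \leqslant \card (\oplam (U) \cap A_n)$ for all $n$, whence the monotonicity
\[
   \udens^{}_{\nts\cA} \bigl( \oplam (V) \bigr) \, \leqslant \,
   \udens^{}_{\nts\cA} \bigl( \oplam (U) \bigr)
\]
holds for every such $V$.

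For a relatively compact open $V$, the interior part of the density formula of \cite[Prop.~3.4]{HuRi}, which in turn rests on \cite[Thm.~1]{Martin1}, applies; as $V = V^{\circ}$, it yields
\[
   \dens (\cL) \, \theta^{}_{\nts H} (V) \, \leqslant \,
   \ldens^{}_{\nts\cA} \bigl( \oplam (V) \bigr) \, \leqslant \,
   \udens^{}_{\nts\cA} \bigl( \oplam (V) \bigr) .
\]
Crucially, this one-sided bound needs no regularity of $\partial V$, only that $V$ be open and relatively compact; if one prefers, it can be recovered by approximating $V$ from inside by compact sets $C$ with $\theta^{}_{\nts H} (\partial C) = 0$, for which the density of $\oplam (C)$ exists and equals $\dens (\cL) \, \theta^{}_{\nts H} (C)$. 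Combining the last two displays and passing to the supremum over all admissible $V$ gives
\[
   \dens (\cL) \, \theta^{}_{\nts H} (U) \, \leqslant \,
   \udens^{}_{\nts\cA} \bigl( \oplam (U) \bigr) ,
\]
which is the asserted inequality; the concluding statement then follows at once, since $\dens (\cL) > 0$ for a lattice, so $\cA$-sparseness of $\oplam (U)$ forces $\theta^{}_{\nts H} (U) < \infty$. The main obstacle is precisely the passage from bounded to unbounded windows: everything hinges on using only the interior estimate, which is stable under the increasing approximation $V \nearrow U$, whereas the complementary upper estimate involving $\overline{V}$ would fail to survive this limit.
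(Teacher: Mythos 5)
Your proof is correct and follows essentially the same route as the paper's: an inner approximation of $U$, the density estimate of \cite[Prop.~3.4]{HuRi} for relatively compact windows, monotonicity of $\udens^{}_{\cA}\bigl(\oplam(\,\cdot\,)\bigr)$ under inclusion of windows, and inner regularity of $\theta^{}_{H}$ to pass to the supremum. The only deviation is that you fatten the compact approximants to relatively compact \emph{open} sets $V$ with $C \subseteq V \subseteq \overline{V} \subseteq U$, so that the interior bound of \cite{HuRi} applies with $V^{\circ} = V$ --- a slightly more careful invocation than the paper's direct application to compact $K$ (whose interior could be smaller in measure), but the same argument in substance.
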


\begin{proof}
  Let $K \subseteq U$ be any compact set. Then, by
  \cite[Prop.~3.4]{HuRi}, we have
\[
     \theta^{}_{\nts H} (K) \, \leqslant \,
     \frac{\udens^{}_{\nts\cA} \bigl(\oplam(K)\bigr)}
     {\dens (\cL)} \ts .
\]
Next, as $K \subseteq U$, we have $\oplam(K) \subseteq \oplam(U)$ and
hence
$\udens^{}_{\nts\cA} \bigl(\oplam(K)\bigr) \leqslant
\udens^{}_{\nts\cA} \bigl(\oplam(U)\bigr)$.  This shows that, for all
$K \subseteq U$ compact, we have
\[
   \theta^{}_{\nts H} (K) \, \leqslant \,
   \frac{\udens^{}_{\nts\cA} \bigl(\oplam(U)\bigr)}
   {\dens (\cL)} \ts .
\]
Finally, by the inner regularity of $\theta^{}_H$, we have 
\[
  \theta^{}_{\nts H} (U) \, =
  \sup_{\substack{K \subseteq U \\ \text{compact}}}
  \! \theta^{}_{\nts H} (K) \, \leqslant \,
  \frac{\udens^{}_{\nts\cA} \bigl(\oplam(U)\bigr)}
  {\dens (\cL)} \ts ,
\]
which completes the argument.
\end{proof}

\begin{remark} 
  It is worth mentioning that, given a relatively compact window
  $W \subseteq H$ and an arbitrary van Hove sequence $\cA$ in $G$, one
  has the following chain of estimates,
\[
\begin{split}
    \dens (\cL) \, \theta^{}_{H} \bigl(W^{\circ}\bigr) \, & \leqslant \,
    \uldens \bigl( \oplam (W) \bigr) \, \leqslant \,
    \uldens^{}_{\cA} \bigl( \oplam (W)\bigr) \\[1mm]
  & \leqslant \, \ldens^{}_{\cA} \bigl( \oplam (W)\bigr)
     \, \leqslant \, \udens^{}_{\nts\cA} \bigl( \oplam (W)\bigr) \\[1mm]
  & \leqslant \, \uudens^{}_{\nts\cA}
     \bigl( \oplam (W)\bigr) \, \leqslant \,
    \uudens \bigl( \oplam (W)\bigr) \, \leqslant \,
    \dens (\cL) \, \theta^{}_{H} 
    \bigl(\ts \overline{W}\ts \bigr) \ts ,  
\end{split}
\]  
which puts Proposition~\ref{prop:sparse-CPS} in a more general
perspective.  \exend
\end{remark}

An immediate consequence of Proposition~\ref{prop:sparse-CPS} 
is the following.

\begin{coro}\label{coro:cor1} 
  Let\/ $(G, H, \cL)$ be a CPS, let\/ $h \in \Cz(H)$ and set\/
  $U\defeq \{ z\in H : h(z) \neq 0 \}$. If the weighted Dirac comb\/
  $\omega^{}_h$ has\/ $\cA$-sparse support for some van Hove
  sequence\/ $\cA$ in $G$, one has\/ $\theta^{}_{\nts H} (U) <\infty$.
  In particular, $\theta^{}_{\nts H} (U) <\infty$ whenever\/
  $\supp(\omega^{}_h)$ is weakly uniformly discrete.
\end{coro}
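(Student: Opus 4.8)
The plan is to recognise that this statement is essentially a reformulation of Proposition~\ref{prop:sparse-CPS}, once the support of $\omega^{}_{h}$ has been identified with a projection set $\oplam(U)$ whose window is \emph{open}. First I would observe that, by the definition of the weighted Dirac comb, $\omega^{}_{h} = \sum_{x\in\pi^{}_{G}(\cL)} h(x^{\star})\, \delta^{}_{x}$ places an atom of weight $h(x^{\star})$ at each $x\in L$, so the set of atoms with nonzero weight is exactly $\{x\in L : h(x^{\star})\neq 0\} = \oplam(U)$. Since $h\in\Cz(H)$ is continuous, the set $U = \{z\in H : h(z)\neq 0\}$ is the preimage of the open set $\CC\setminus\{0\}$ and thus open, which is precisely the hypothesis under which Proposition~\ref{prop:sparse-CPS} applies.

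Next, assuming $\supp(\omega^{}_{h})$ is $\cA$-sparse, I would argue that $\supp(\omega^{}_{h}) = \oplam(U)$: a priori the support is the closure of the set of weighted atoms, but $\cA$-sparseness forces local finiteness by Remark~\ref{rem:loc-fin}, so $\oplam(U)$ is already closed and discrete and coincides with the support. Hence $\oplam(U)$ is itself $\cA$-sparse, and Proposition~\ref{prop:sparse-CPS} yields
\[
   \theta^{}_{\nts H}(U) \, \leqslant \,
   \frac{\udens^{}_{\nts\cA}\bigl(\oplam(U)\bigr)}{\dens(\cL)}
   \, < \, \infty \ts .
\]
For the final assertion, I would invoke Theorem~\ref{thm:equivalences}: weak uniform discreteness of $\supp(\omega^{}_{h})$ implies strong sparseness, and hence $\cA$-sparseness for every van Hove sequence $\cA$ in $G$, so the conclusion $\theta^{}_{\nts H}(U)<\infty$ follows from the first part.

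Since the quantitative content is entirely carried by Proposition~\ref{prop:sparse-CPS}, I do not expect any genuine obstacle here. The one point that requires care is the identification $\supp(\omega^{}_{h}) = \oplam(U)$, namely verifying that no spurious accumulation points of $\oplam(U)$ enter the support; this is exactly where the sparseness (equivalently, local finiteness) hypothesis is used, and it is also the reason the statement is phrased in terms of the support of $\omega^{}_{h}$ rather than of $\oplam(U)$ directly.
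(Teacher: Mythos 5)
Your proposal is correct and follows essentially the same route as the paper: identify $\supp(\omega^{}_{h})$ with the projection set $\oplam(U)$, note that $U$ is open so that Proposition~\ref{prop:sparse-CPS} applies, and deduce the final claim from Theorem~\ref{thm:equivalences}. The only difference is that you spell out two points the paper leaves implicit (openness of $U$ and the fact that sparseness makes $\oplam(U)$ closed, so it really equals the support), which is a welcome sharpening rather than a new argument.
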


\begin{proof}
 Observe that
 \[
    \supp(\omega^{}_{h}) \, = \,
     \{ x\in L : h(x^{\star}) \neq 0 \} 
     \, = \, \{ x\in L : x^{\star}\in U \}
     \, = \, \oplam(U) \ts .
 \]
 Then, $\cA$-sparseness of the support means that
 $\udens^{}_{\cA} \bigl( \oplam (U)\bigr)$ is finite, and the result
 follows from Proposition~\ref{prop:sparse-CPS}.
 
 {The last claim now follows via the implication
 (1) $\Rightarrow$ (4) from Theorem~\ref{thm:equivalences}.}
\end{proof}

To continue, we will have to consider a group $G$ and its dual,
$\widehat{G}$.  Unless stated otherwise, we assume that we have
selected a van Hove sequence in each of these two groups, namely $\cA$
for $G$ and $\cB$ for $\widehat{G}$.  In the case of a self-dual
group, such as $\RR^d$, we might think of taking the same sequence for
both. In contrast, for $G=\ZZ^{m}$ hence $\widehat{G} = \TT^{m}$, we
fix some $\cA$ for $\ZZ^m$, say a sequence of centred cubes or balls,
while it would be natural to take $\cB = \{ B_n \}$ as the constant
sequence, so $B_n = \TT^{m}$ for all $n$, and similarly for other
LCAGs $H$ that are compact. Note that this is consistent with our
nestedness condition because $H$ is both open and closed.

\begin{definition} 
  Let $G$ be an LCAG with $\sigma$-compact dual group,
  $\widehat{G}$. Assume that a van Hove sequence $\cB$ for
  $\widehat{G}$ has been selected.  Then, we say that a measure
  $\mu \in \cM^\infty(G)$ has \emph{sparse Fourier--Bohr support}
  (FBS) with respect to $\cB$ if
\begin{enumerate}\itemsep=2pt
\item $\mu$ is Fourier transformable, with transform
  $\widehat{\mu}\ts$;
\item the support, $\supp(\widehat{\mu})$, is a
  $\cB$-sparse point set in $\widehat{G}$.
\end{enumerate}
Moreover, if also $G$ is $\sigma$-compact and a van Hove sequence
$\cA$ for $G$ is given, a measure $\mu$ with $\cA$-sparse support and
$\cB$-sparse FBS is called \emph{doubly sparse} with respect to
$(\cA, \cB \ts )$, or $(\cA, \cB \ts)$-sparse for short.  If $\mu$ is
$(\cA, \cB \ts)$-sparse for any pair of van Hove sequences, we simply
call $\mu$ \emph{doubly sparse}.
\end{definition}

\begin{remark}\label{rem:sparse}
  Note that the notion of a $\cB$-sparse FBS does not require the
  existence of a van Hove sequence in $G$. In fact, $\mu$ has
  $\cB$-sparse FBS if and only if $\mu$ is Fourier transformable,
  $\widehat{\mu}$ is a pure point measure, and the point set
  $\{ \chi \in \widehat{G} : \widehat{\mu}(\{ \chi \}) \neq 0 \}$ is
  $\cB$-sparse.

  Moreover, if a measure $\mu \in \cM^\infty(G)$ has sparse FBS,
  $\widehat{\mu}$ is pure point and, consequently, $\mu$ must be
  strongly almost periodic \cite[Cor.~4.10.13]{MoSt}.  Here, strongly
  almost periodic for a measure $\mu$ means that $\mu*g$ is uniformly
  (or Bohr) almost periodic for every $g\in C_{\mathsf{c}} (G)$,
  {and any such measure $\mu$ must be translation bounded.}
  In fact, {for $\mu\ne 0$,} $\supp(\mu) \subseteq G$ is
  relatively dense by \cite[Lemma~5.9.1]{NS11}.  In particular, it
  then follows that a measure $\mu$ with sparse FBS has Meyer set
  support if and only if $\mu \neq 0$ and $\supp(\mu)$ is a subset of
  a Meyer set. \exend
\end{remark}

\begin{example}\label{ex:cryst}
  All crystallographic measures on $\RR^d$ have a strongly sparse
  FBS. Indeed, $\omega \in \cM^{\infty} (\RR^d)$ is
  \emph{crystallographic} if it is of the form
  $\omega = \mu * \delta^{}_{\nts \vG}$ with $\mu$ a \emph{finite}
  measure and $\vG\subset \RR^d$ a lattice; compare
  \cite[Sec.~9.2.3]{TAO1}. Any such measure is Fourier transformable,
  with
\[
   \widehat{\omega} \, = \, \widehat{\mu} \cdot
   \widehat{\delta^{}_{\nts \vG}} \, = \, \dens (\vG)
   \, \widehat{\mu} \, \delta^{}_{\nts \vG^0} \, = \,
   \dens (\vG) \sum_{k\in \vG^0} \widehat{\mu} (k)
   \, \delta^{}_{k}
\]
by an application of the convolution theorem in conjunction with the
PSF from Eq.~\eqref{eq:PSF}. Here, $\widehat{\mu}$ is a continuous
function on $\RR^d$, and the dual lattice, $\vG^0$, is a uniformly
discrete point set; see \cite[Ex.~9.2]{TAO1}. This means that
$\supp (\widehat{\omega})\subseteq \vG^0$ is a strongly sparse point
set in $\RR^d$, and $\omega$ is \emph{doubly sparse} when $\mu$ has
finite support, which is to say that it is of the form
$\mu = \sum_{x\in F} \mu(\{ x \}) \, \delta_x$ for some finite set
$F \subset \RR^d$.  \exend
\end{example}

In what follows, we shall consider a slight generalisation of this
idea, namely, measures that are supported within finitely many
translates of a lattice, but with coefficients that are not
necessarily \mbox{lattice{\ts}-periodic}.  Such measures thus have a
support with a crystallographic structure, without actually being
crystallographic in the above sense.\smallskip

To continue, we need the following notion for continuous functions.
\begin{definition} 
  We say that a continuous function
  $h \! : \, H \xrightarrow{\quad} \CC$ has \emph{finite-measure
    support} if
  $\, \theta^{}_{\nts H} \bigl( \{ x \in H : h(x) \neq 0 \} \bigr) <
  \infty$.
\end{definition}

\begin{fact}\label{fact:h-is-L1}
  Let\/ $H$ be an arbitrary LCAG, and consider\/ $h \in \Cz(H)$. If\/
  $h$ has \mbox{finite{\ts}-measure} support, then\/ $h \in L^1 (H)$.
\end{fact}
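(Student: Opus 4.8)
The plan is to exploit the boundedness that comes for free from membership in $\Cz(H)$ and then bound the $L^1$-norm by the product of this bound with the measure of the support. First I would record that any $h \in \Cz(H)$ is bounded: by definition of vanishing at infinity, the set $\{ x \in H : \lvert h(x) \rvert \geqslant 1 \}$ is compact, and a continuous function is bounded on a compact set, so $M \defeq \| h \|^{}_{\infty} < \infty$.

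Next, I would set $S \defeq \{ x \in H : h(x) \neq 0 \}$ and note that $S$ is measurable. Indeed, since $h$ is continuous, $S = h^{-1}(\CC \setminus \{ 0 \})$ is the preimage of an open set, hence open and thus Borel measurable. By the finite-measure support hypothesis, $\theta^{}_{\nts H} (S) < \infty$.

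Finally, since $h$ vanishes identically off $S$, I would estimate
\[
   \int_H \lvert h(x) \rvert \dd x \, =
   \int_S \lvert h(x) \rvert \dd x \, \leqslant \,
   M \, \theta^{}_{\nts H} (S) \, < \, \infty \ts ,
\]
which shows $h \in L^1 (H)$ and completes the argument. There is no serious obstacle here; the only points requiring care are the standard fact that $\Cz$-functions are bounded and the observation that the nonvanishing set is open, hence measurable, so that the crude bound $\lvert h \rvert \leqslant M \cdot 1^{}_{S}$ may be integrated.
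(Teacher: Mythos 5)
Your proof is correct and follows essentially the same route as the paper: bound $\lvert h \rvert$ by $\| h \|^{}_{\infty} \cdot 1^{}_{S}$ on the (open, finite-measure) nonvanishing set and integrate. The extra justifications you supply (compactness of $\{ \lvert h \rvert \geqslant 1 \}$ for boundedness, openness of $S$ for measurability) are details the paper leaves implicit, but the argument is identical.
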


\begin{proof}
  Any $h \in \Cz (H)$ is bounded. With
  $U\defeq \{ x \in H : h(x) \neq 0 \}$, we thus get
\[
   \int_{\nts H} \lvert h(z) \rvert \dd z  \, = 
   \int_U \lvert h(z) \rvert \dd z \, \leqslant \,
    \| h \|^{}_\infty  \, \theta^{}_H( U ) \, < \, \infty \ts ,
\]
which implies the claim.
\end{proof}

\section{Measures with Meyer set support and 
sparse FBS}\label{sec:meyer}

{In this section,} we characterise translation-bounded
measures, so $\mu\in\cM^{\infty} (G)$, with the additional properties
that $\supp (\mu) - \supp (\mu)$ is uniformly discrete and that $\mu$
has a sparse FBS.  Simple examples are Dirac combs of lattices in
$\RR^d$, as mentioned in Eq.~\eqref{eq:PSF} and in
Example~\ref{ex:cryst}.  An important tool will be the structure of
compactly generated LCAGs, which we recall for convenience from
\cite[Thm.~9.8]{HR}; see also \cite[Thm.~4.2.29]{Reiter}.

\begin{fact}\label{fact:comp-gen}
  If the LCAG\/ $H$ is compactly generated, there are non-negative
  integers\/ $d$ and\/ $m$ such that\/ $H$, as a topological group, is
  isomorphic with\/ $ \RR^d \!\times\! \ZZ^m \!\times\! \KK$, where
  the Abelian group\/ $\KK$ is compact.  \qed
\end{fact}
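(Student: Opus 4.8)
The plan is to assemble the decomposition from three standard structural building blocks and then to upgrade the resulting group extensions to a genuine topological direct product.

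First I would isolate an \emph{open elementary} subgroup. Writing $H^{\circ}$ for the identity component, the quotient $H/H^{\circ}$ is totally disconnected, so by van Dantzig's theorem it has a compact open subgroup $C$; pulling $C$ back along $q\colon H\to H/H^{\circ}$ produces an open subgroup $H_1 = q^{-1}(C)$ with $H_1/H^{\circ}$ compact. The identity component $H^{\circ}$ is connected and therefore compactly generated, and a connected LCAG is of the form $\RR^d \times \KK_0$ with $\KK_0$ compact connected (the structure of connected groups via one-parameter subgroups). Since $H_1$ is then an extension of the compact group $C$ by $\RR^d \times \KK_0$, I would split off the $\RR^d$ factor (using that $\RR^d$ is divisible, hence injective in the category of LCAGs) and absorb the two compact pieces into one, arranging $H_1 \cong \RR^d \times \KK_1$ with $\KK_1$ compact and $H_1$ open in $H$.

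Next, because $H_1$ is open, the quotient $H/H_1$ is discrete; and since quotients of compactly generated groups by closed subgroups stay compactly generated, while a discrete compactly generated group is finitely generated, the fundamental theorem of finitely generated Abelian groups gives $H/H_1 \cong \ZZ^m \times F$ with $F$ finite. This exhibits $H$ as an extension
\[
0 \longrightarrow \RR^d \times \KK_1 \longrightarrow H
\longrightarrow \ZZ^m \times F \longrightarrow 0 .
\]

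The hard part is turning this extension into a topological direct product, and this is where I expect the main obstacle to lie: one must ensure all splittings are \emph{topological} rather than merely algebraic. I would split the factors off one at a time, using injectivity of $\RR^d$ to realise it as a direct factor, and freeness of $\ZZ^m$ to lift the free part of the quotient to a discrete complementary subgroup $\ZZ^m \leqslant H$; the remaining compact connected piece $\KK_1$ together with the finite piece $F$ then recombine into a single compact group $\KK$, yielding $H \cong \RR^d \times \ZZ^m \times \KK$ with the stated non-negative integers $d$ and $m$. An alternative, dual route would be to show that compact generation of $H$ is equivalent to $\widehat{H}$ having no small subgroups, to invoke the classification of such ``Lie'' LCAGs for $\widehat{H}$, and to dualise back; this merely trades the splitting problem for the small-subgroups characterisation, so I would favour the direct assembly above.
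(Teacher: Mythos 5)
The paper does not prove this statement at all: Fact~\ref{fact:comp-gen} is recalled verbatim from \cite[Thm.~9.8]{HR} (see also \cite[Thm.~4.2.29]{Reiter}), which is why it carries a closing box and no argument. So your sketch has to stand on its own as a proof of that classical theorem. Its skeleton is indeed the standard assembly (van Dantzig, structure of connected groups, finitely generated Abelian groups, splitting), and one of the two splitting steps is genuinely sound as you describe it: writing $\pi\colon H \to \ZZ^m \!\times\! F$ and $H_2 \defeq \pi^{-1}(F)$, the subgroup $H_2$ is \emph{open}, so any algebraic section $\sigma$ of $H/H_2 \cong \ZZ^m$ (which exists by freeness) yields a continuous, bijective, \emph{open} homomorphism $H_2 \times \ZZ^m \to H$, $(h,n) \mapsto h + \sigma(n)$; hence that splitting is automatically topological, and $\sigma(\ZZ^m)$ is discrete because it meets the open subgroup $H_2$ only in $0$.

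The genuine gap sits exactly at the step you flagged as the hard part. The claim ``$\RR^d$ is divisible, hence injective in the category of LCAGs'' is a non sequitur: divisibility makes $\RR^d$ injective in the category of \emph{abstract} Abelian groups, so it produces an algebraic retraction and an algebraic complement, but nothing makes that retraction continuous or that complement closed. The implication is false in general: $\mathbb{Q}$ with the discrete topology is divisible, yet the homomorphism $\ZZ \to \mathbb{Q}$, $1 \mapsto 1$, defined on the closed subgroup $\ZZ \subseteq \RR$, admits no continuous extension $\RR \to \mathbb{Q}$, since the image of $\RR$ would be connected and hence trivial. The statement you actually need --- a closed vector subgroup $\RR^d$ of an LCAG is a \emph{topological} direct summand (equivalently, $\RR^d$ is injective in the topological category; a theorem of Moskowitz) --- is true, but it is a substantive result whose standard proofs run through the very structure theory being established here, or through duality; it does not follow from divisibility. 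The distinction matters in your argument: where the quotient is \emph{finite} (the $F$-step) the algebraic splitting does upgrade automatically, because the complement is finite and the divisible subgroup is open, so the open mapping theorem finishes it; but in the van Dantzig step the quotient $C$ is compact and in general infinite, and there the abstract complement of $\RR^d$ in $H_1/\KK_0$ produced by divisibility need not be closed. A related circularity affects your other main input: the structure theorem for connected LCAGs ($\cong \RR^d \!\times\! \KK_0$ with $\KK_0$ compact) is, in \cite{HR}, a \emph{corollary} of the compactly generated theorem you are proving, so it must be imported from an independent source (e.g.\ approximation by Lie groups) for the argument not to be circular. In short: either cite the vector-splitting theorem honestly instead of deducing it from divisibility, or simply do what the paper does and quote \cite[Thm.~9.8]{HR}.
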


Recall that the $m$-torus, $\TT^m = \RR^m/\ZZ^m$, is the dual group of
$\ZZ^m$.  For convenience in explicit calculations, we represent it as
$[0,1)^m$ with addition modulo $1$, which is fully compatible with
writing the elements of $\TT^m$ as $x+\ZZ^m$ with $x\in [0,1)^m$.
Before we continue, we need the following simple variant of the
classic Paley--Wiener theorem \cite[Sec.~VI.4]{Y}.

\begin{lemma}\label{lem:PW}
  Let\/ $d,m\in\NN$ be fixed. Then, for any fixed\/
  $f \in C_{\mathsf{c}} (\RR^d \! \times \! \ZZ^m)$, there exists an
  analytic function\/
  $F \! : \, \RR^d \! \times \! \RR^m \xrightarrow{\quad} \CC$ such
  that\/ $F$ is\/ $\ZZ^m$-periodic in the second argument and that\/
  $\widehat{f} (x, y + \ZZ^m ) = F (x,y)$ holds for all\/ $x\in\RR^d$
  and\/ $y\in [0,1)^m$.
\end{lemma}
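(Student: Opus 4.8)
The plan is to compute $\widehat{f}$ explicitly in coordinates and then simply read off the function $F$. Since $\ZZ^m$ is discrete, the Haar measure on $G = \RR^d \!\times\! \ZZ^m$ is Lebesgue measure on the first factor times counting measure on the second. Writing $f^{}_n \defeq f(\cdot, n) \in C_{\mathsf{c}}(\RR^d)$ for $n \in \ZZ^m$, compactness of $\supp(f)$ forces $f^{}_n = 0$ for all but finitely many $n$, say for $n$ outside a finite set $S \subset \ZZ^m$, while all the $f^{}_n$ with $n\in S$ are supported in one common compact set $B \subset \RR^d$. In the additive notation $\chi = (k, y+\ZZ^m)$ with $k \in \RR^d$ and $y \in [0,1)^m$, one has $\overline{\chi(t,n)} = \ee^{-2\pi\ii k t}\,\ee^{-2\pi\ii y\cdot n}$, so the Fourier transform collapses to the finite sum
\[
   \widehat{f}(k, y+\ZZ^m) \, = \sum_{n \in S} \ee^{-2\pi\ii y\cdot n}
   \int_{\RR^d} \ee^{-2\pi\ii k t}\, f^{}_n (t) \dd t \ts .
\]

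This formula makes the choice of $F$ evident. I would define, for $(x,y) \in \RR^d \!\times\! \RR^m$,
\[
   F(x,y) \, \defeq \sum_{n \in S} \ee^{-2\pi\ii y\cdot n}\,
   \widehat{f^{}_n} (x) \ts ,
\]
where $\widehat{f^{}_n}$ is the classical Euclidean Fourier transform of $f^{}_n$. By construction, $F(x,y) = \widehat{f}(x, y+\ZZ^m)$ holds for all $x\in\RR^d$ and $y\in [0,1)^m$, which is the required identity; so it only remains to check the two structural properties of $F$.

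For the $\ZZ^m$-periodicity in the second argument, replacing $y$ by $y+p$ with $p \in \ZZ^m$ leaves each factor $\ee^{-2\pi\ii y\cdot n}$ unchanged, because $p\cdot n \in \ZZ$, whence $F(x,y+p) = F(x,y)$. For analyticity, each $f^{}_n$ lies in $L^1(\RR^d)$ with compact support $B$, so the classical Paley--Wiener theorem \cite[Sec.~VI.4]{Y} ensures that every $\widehat{f^{}_n}$ extends to an entire function on $\CC^d$ (of exponential type governed by $B$). Since each factor $\ee^{-2\pi\ii y\cdot n}$ is entire in $y\in\CC^m$, the finite sum $F$ extends to an entire function on $\CC^{d+m}$ and is, in particular, real-analytic on $\RR^d \!\times\! \RR^m$.

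The argument is essentially bookkeeping once the transform is written out, and I do not expect a genuine obstacle. The only point demanding care is the correct normalisation of the Haar measure on $\RR^d \!\times\! \ZZ^m$ as Lebesgue $\times$ counting, which is what lets the sum over $n\in S$ and the integral over $t$ decouple cleanly; invoking the one-variable Paley--Wiener theorem on each of the finitely many $\widehat{f^{}_n}$ then delivers analyticity directly.
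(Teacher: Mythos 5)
Your proof is correct and follows essentially the same route as the paper: both write the transform as the finite sum $\sum_{n} \ee^{-2\pi\ii\ts n\cdot y}\ts\widehat{f_n}(x)$ over the finitely many nonzero slices and observe that this formula defines a $\ZZ^m$-periodic entire extension to $\RR^d\!\times\!\RR^m$. The only (inessential) difference is in how analyticity is certified: the paper lifts $f$ to a compactly supported finite measure on $\RR^{d+m}$ and invokes the easy direction of the Paley--Wiener--Schwartz theorem for distributions, whereas you apply the classical Paley--Wiener theorem slice-by-slice to the compactly supported functions $f_n$, which avoids distribution theory altogether.
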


\begin{proof}
  Define
  $\mu \! : \, C_{\mathsf{c}} (\RR^d \! \times \! \RR^m )
  \xrightarrow{\quad} \CC$ by
\[
     \mu (g) \, = \sum_{v\in\ZZ^m} \int_{\RR^d} g(u,v)  f (u,v) \dd u \ts ,
\]   
which is a finite measure of compact support because $\supp (f)$ is
compact by assumption, and thus also a tempered distribution.  Its
(distributional) Fourier transform is an analytic function on
$\RR^{d+m}$, by the easy direction of the Paley--Wiener--Schwartz
theorem for distributions, see \cite[Thms.~III.2.2 and III.4.5]{GS},
and reads
\[
    F (x,y) \, = \,  \widehat {\mu} (x,y) \, = \sum_{v\in\ZZ^m}
    \ee^{-2 \pi \ii vy} \int_{\RR^d} 
    \ee^{- 2 \pi \ii ux} f (u,v) \dd u \ts ,
\]
with a unique continuation to an entire function on $\CC^{d+m}$.  The
relation $F (x, y+k) = F (x,y)$ for arbitrary $k\in \ZZ^m$ and all
$x\in \RR^d$, $y \in \RR^m$ is clear.
    
On the other hand, when $x\in\RR^d$ and $y+\ZZ^m \in \TT^m$, we get
\[
\begin{split}
    \widehat{f} (x, y+\ZZ^m) \, & = \int_{\RR^d}
    \sum_{v\in\ZZ^m} \ee^{-2 \pi \ii vy} \,
    \ee^{- 2 \pi \ii ux} f (u,v) \dd u \\[2mm] 
    & = \int_{\RR^d \times \ZZ^m} \ee^{-2 \pi \ii 
    (ux + vy) } \dd \mu (u,v) 
    \, = \, F (x,y) \ts ,
\end{split}    
\]    
   which completes the argument.
\end{proof}  

Here, we are interested in the following consequence.

\begin{coro}\label{coro:PW}
  Let\/ $d,m\in\NN$ be fixed and consider a function\/
  $f \in C_{\mathsf{c}} (\RR^d \! \times \ZZ^m)$ with\/
  $f \not \equiv 0$. Then, the set
\[
      U \, \defeq \, \{ ( x, y + \ZZ^m) \in \RR^d \! \times \! \TT^m :
      \widehat{f} (x, y + \ZZ^m) = 0 \}
\]   
   has measure\/ $0$ in\/ $\RR^d \! \times \! \TT^m$.
\end{coro}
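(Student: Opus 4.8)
The plan is to reduce the statement to the classical fact that the zero set of a nonzero real-analytic function on a connected open subset of $\RR^N$ has Lebesgue measure zero.

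First, I would invoke Lemma~\ref{lem:PW} to obtain the analytic function $F\colon \RR^d \!\times\! \RR^m \to \CC$, which extends to an entire function on $\CC^{d+m}$ and satisfies $\widehat{f}(x,y+\ZZ^m) = F(x,y)$ for all $x\in\RR^d$ and $y\in[0,1)^m$. Under the standard identification of $\TT^m$ with $[0,1)^m$, which is measure preserving since the normalised Haar measure on $\TT^m$ corresponds to Lebesgue measure on $[0,1)^m$, the set $U$ corresponds precisely to the intersection of the zero set of $F$ with $\RR^d \!\times\! [0,1)^m$. It therefore suffices to show that this intersection has Lebesgue measure zero in $\RR^{d+m}$.

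Next, I would check that $F\not\equiv 0$. Since $f\in C_{\mathsf{c}}(\RR^d \!\times\! \ZZ^m)$ is bounded with compact support, it lies in $L^1(\RR^d \!\times\! \ZZ^m)$, and the injectivity of the Fourier transform on $L^1$ forces $\widehat{f}\not\equiv 0$ on $\RR^d \!\times\! \TT^m$. Hence $F$ does not vanish identically. The restriction of the entire function $F$ to $\RR^{d+m}$ is real-analytic, and $\RR^{d+m}$ is connected, so the cited classical fact applies: the zero set of $F$ in $\RR^{d+m}$ has Lebesgue measure zero. A fortiori, its intersection with $\RR^d \!\times\! [0,1)^m$ has measure zero, and transferring back through the measure-preserving identification shows that $U$ has measure $0$ in $\RR^d \!\times\! \TT^m$, as claimed.

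The only genuinely nontrivial ingredient is the measure-zero property of zero sets of nonzero real-analytic functions; the remaining steps, namely the appeal to Lemma~\ref{lem:PW}, the injectivity of the Fourier transform, and the measure-preserving change of coordinates between $\TT^m$ and $[0,1)^m$, are routine.
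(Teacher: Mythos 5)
Your proof is correct and takes essentially the same route as the paper: invoke Lemma~\ref{lem:PW}, use the fact that the zero set of a nonzero real-analytic function on $\RR^{d+m}$ is Lebesgue-null, and transfer the conclusion to $\RR^d \times \TT^m$ via the measure-preserving identification with a fundamental domain (the paper phrases this step through the canonical projection $\pi$ with $U = \pi(V)$ and $V = \pi^{-1}(U)$, which is the same mechanism). Your explicit check that $F \not\equiv 0$, via $L^1$-injectivity of the Fourier transform, is a detail the paper leaves implicit and is a welcome addition.
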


\begin{proof}
  Defining the function $F$ as in Lemma~\ref{lem:PW},
  $V \defeq \{ (x,y) \in \RR^d \! \times \! \RR^m : F (x,y) = 0 \}$ is
  a null set in $\RR^d \! \times \! \RR^m$ because $F$ is analytic
  (see also \cite{mityagin} for this point). Since $F$ is
  $\ZZ^m$-periodic in its second variable, we have a canonical
  projection
  $\pi \! : \, \RR^d \! \times \! \RR^m \xrightarrow{\quad} \RR^d \!
  \times \! \TT^m$ such that $U = \pi (V)$ and $V = \pi^{-1} (U)$,
  which implies the claim.
\end{proof}

At this point, we can harvest the constructive approach to the CPS
of a given Meyer set with methods from \cite{BM,NS11}.

\begin{remark}\label{rem:harvest}
  Recall that a subset $\vL$ of a locally compact Abelian group $G$ is
  a Meyer set if it is relatively dense and
  $\vL\nts\nts -\! \vL \nts\nts - \!\vL$ is uniformly discrete \cite{Jeff2}.
  If $G$ is compactly generated, then the second condition is equivalent
  to the uniform discreteness of $\vL \nts \nts - \! \vL$; see
  \cite[Thm.~1.1]{Jeff} for $G=\RR^d$ and \cite[Appendix]{BLM} for the
  general case, as well as \cite[Rem.~2.1]{TAO1}. We note further that
  the following results do not require the stronger Meyer set
  condition; that is, in this case, it is sufficient to require that
  $\supp(\mu)\subseteq \vL$, where $\vL- \nts \vL$ is uniformly
  discrete; compare \mbox{\cite[Thm.~5.5.2]{NS11}}.  \exend
\end{remark}

\begin{prop}\label{P1} 
  Let\/ $\mu\neq 0$ be a translation-bounded measure on $G$.  If\/
  $\supp(\mu)$ is a subset of a Meyer set and if\/ $\mu$ has\/
  $\cB$-sparse FBS for some van Hove sequence\/ $\cB$ in\/
  $\widehat{G}$, then\/ $\mu$ is\/ $(\cA, \cB\ts)$-sparse for every
  van Hove sequence\/ $\cA$ in\/ $G$.
  
  Moreover, there is a CPS\/ $(G, \ZZ^m \! \times \!\KK, \cL)$, with\/
  $m\in\NN_0$ and\/ $\KK$ a compact Abelian group, and some\/
  $h \in \Cc ( \ZZ^m \! \times \! \KK )$ with\/
  $\widehat{h} \in C^{}_0 \bigl(\TT^m \! \times \! \widehat{\KK}
  \bigr) \cap L^1 \bigl(\TT^m \! \times \! \widehat{\KK} \bigr)$ such
  that
\[
   \mu \, = \, \omega^{}_h   \quad \text{and} \quad 
   \widehat{\mu} \, = \, \dens (\cL) \, \omega^{}_{\widecheck{h}}\ts .
\]
\end{prop}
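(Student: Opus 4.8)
The plan is to prove the two claims in turn: the sparseness statement follows from a density argument, while the representation rests on the constructive cut and project machinery combined with an analyticity argument that removes any Euclidean factor from the internal group. For the first claim, recall from Remark~\ref{rem:harvest} that a Meyer set $\vL$ has $\vL -\vL$ uniformly discrete, so $\vL$, and therefore $\supp(\mu)\subseteq\vL$, is uniformly discrete and thus weakly uniformly discrete. By Theorem~\ref{thm:equivalences}, this is equivalent to $\uudens(\supp(\mu))<\infty$, which forces $\udens^{}_{\cA}(\supp(\mu))<\infty$, hence $\cA$-sparseness of $\supp(\mu)$, for \emph{every} van Hove sequence $\cA$ in $G$. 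Together with the hypothesis that $\mu$ has $\cB$-sparse FBS, this gives $(\cA,\cB)$-sparseness for every $\cA$.

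For the representation, I would first use that $\cB$-sparse FBS makes $\mu$ strongly almost periodic and translation bounded (Remark~\ref{rem:sparse}). As $\supp(\mu)$ lies in a Meyer set, the constructive approach of \cite{BM,NS11} (see Remark~\ref{rem:harvest}) provides a CPS $(G,H,\cL)$ with $H$ compactly generated and an $h\in\Cc(H)$ such that $\mu=\omega^{}_h$; by Fact~\ref{fact:comp-gen} we may write $H\cong\RR^d\!\times\!\ZZ^m\!\times\!\KK$ with $\KK$ compact. Passing to the dual CPS $(\widehat{G},\widehat{H},\cL^0)$ and invoking the Poisson summation formula for weighted model combs then yields $\widehat{\mu}=\dens(\cL)\,\omega^{}_{\widecheck{h}}$, where $\widecheck{h}\in\Cz(\widehat{H})$ because $h\in\Cc(H)\subseteq L^1(H)$.

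The crux is to show that $d=0$. Since $\mu$ has $\cB$-sparse FBS, the support $\supp(\widehat{\mu})=\supp(\omega^{}_{\widecheck{h}})$ is $\cB$-sparse, so Corollary~\ref{coro:cor1}, applied in the dual CPS, shows that the window $U=\{z\in\widehat{H}:\widecheck{h}(z)\neq 0\}$ has finite Haar measure in $\widehat{H}=\RR^d\!\times\!\TT^m\!\times\!\widehat{\KK}$. Suppose $d\geqslant 1$. As $h\in\Cc(H)$ has compact support, for each fixed $(\tau,\gamma)\in\TT^m\!\times\!\widehat{\KK}$, integrating out the $\KK$-variable and applying the Paley--Wiener argument behind Lemma~\ref{lem:PW} shows that the slice $\xi\mapsto\widecheck{h}(\xi,\tau,\gamma)$ extends to an entire function on $\CC^d$. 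Since $\mu\neq 0$, the continuous function $\widecheck{h}$ is not identically zero, so $\{\widecheck{h}\neq 0\}$ is a nonempty open set, and hence the set of $(\tau,\gamma)$ whose slice is not identically zero has positive measure. On each such slice, the zero set of a nonzero analytic function is a Lebesgue null set (compare Corollary~\ref{coro:PW}), so $\{\xi:\widecheck{h}(\xi,\tau,\gamma)\neq 0\}$ is co-null and of infinite measure in $\RR^d$. By Tonelli's theorem, this forces $\theta^{}_{\widehat{H}}(U)=\infty$, contradicting finiteness of the window. Hence $d=0$ and $H=\ZZ^m\!\times\!\KK$.

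With $H=\ZZ^m\!\times\!\KK$, one has $\widehat{H}=\TT^m\!\times\!\widehat{\KK}$, and $h\in\Cc(\ZZ^m\!\times\!\KK)\subseteq L^1$ gives $\widehat{h}\in\Cz(\TT^m\!\times\!\widehat{\KK})$ by the Riemann--Lebesgue lemma. The finite-measure window shows that $\widehat{h}$ has finite-measure support, whence $\widehat{h}\in L^1(\TT^m\!\times\!\widehat{\KK})$ by Fact~\ref{fact:h-is-L1}; together with $\widehat{\mu}=\dens(\cL)\,\omega^{}_{\widecheck{h}}$, this is exactly the asserted representation. I expect the analyticity step to be the main obstacle: it is precisely what forces the internal group down to $\ZZ^m\!\times\!\KK$ and the reason Lemma~\ref{lem:PW} and Corollary~\ref{coro:PW} are needed, whereas the representation $\mu=\omega^{}_h$ and the model-comb Poisson formula, though technically substantial, are available from the cited machinery.
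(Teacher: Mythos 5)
Your proposal is correct and takes essentially the same route as the paper's proof: uniform (hence weak uniform) discreteness of Meyer sets plus Theorem~\ref{thm:equivalences} for the sparseness claim, \cite[Thm.~5.5.2]{NS11} for $\mu=\omega^{}_h$ with $H$ compactly generated, \cite[Thm.~5.3]{CRS} (which requires the Fourier transformability contained in the sparse-FBS hypothesis) for $\widehat{\mu}=\dens(\cL)\,\omega^{}_{\widecheck{h}}$, Corollary~\ref{coro:cor1} in the dual CPS for the finite-measure window, and Paley--Wiener analyticity to force $d=0$. The only difference is cosmetic: you slice the window along $\RR^d$ at fixed $(\tau,\gamma)\in\TT^m\!\times\!\widehat{\KK}$ and conclude via Tonelli, whereas the paper fixes $z^{}_0\in\widehat{\KK}$ and applies Lemma~\ref{lem:PW} and Corollary~\ref{coro:PW} to the analytic function on all of $\RR^d\!\times\!\TT^m$ at once---both versions rest on the same two results and the same finite-measure contradiction.
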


\begin{proof}
  Since $\supp (\mu)$ is contained in a Meyer set, it is uniformly
  discrete, hence strongly sparse, and thus $\cA$-sparse for every van
  Hove sequence $\cA$ in $G$.

  By definition, compare Remark~\ref{rem:sparse},
  $\mu\in\cM^{\infty} (G)$ having $\cB$-sparse FBS means that $\mu$ is
  Fourier transformable and $\widehat{\mu}$ is a pure point measure.
  Consequently, by \cite[Cor.~4.10.13]{MoSt}, $\mu$ is a strongly
  almost periodic measure.  By Remark~\ref{rem:sparse}, we know that
  $\supp (\mu)$ must actually be a Meyer set, so
  \cite[Thm.~5.5.2]{NS11} implies that there exists a CPS
  $(G, H, \cL)$, with $H$ compactly generated, and some function
  $h \in \Cc(H)$ such that $\mu \, = \, \omega^{}_h$. In other words,
  $\mu$ is a weighted model comb.
  
  Since $\mu$ is Fourier transformable, \cite[Thm.~5.3]{CRS} implies
  that we have $\widecheck{h}\in L^1(\widehat{H})$ and
  $\widehat{\omega^{}_h} = \dens(\cL) \ts \omega_{\widecheck{h}}$,
  where $\widecheck{h} \in C^{}_0 (\widehat{H})$ is clear from
  \cite[Thm.~1.2.4]{Rudin} (or from the Riemann--Lebesgue lemma).  By
  assumption, $\supp(\omega^{}_{\widecheck{h}})=\supp(\widehat{\mu})$
  is $\cB$-sparse in $\widehat{G}$. Via the dual CPS
  $(\widehat{G}, \widehat{H}, \cL^0 )$, and applying
  Corollary~\ref{coro:cor1} to the set
  $U = \{ z \in \widehat{H} : \widecheck{h} (z) \ne 0 \}$, we see that
 \begin{equation}\label{eq:finiteU}
  \theta^{}_{\nts\widehat{H}}(U)<\infty \ts ,
 \end{equation}
 and that the FBS of $\mu$ is the cut and project set $\oplam (U)$ in
 the dual CPS.

 Now, by Fact~\ref{fact:comp-gen}, we have
 $H \cong \RR^d \!\times\! \ZZ^m \!\times\! \KK$ for some
 $d,m \in \NN_0$ and $\KK$ a compact Abelian group, and 
 we identify $H$
 with this group. We shall now show that, in fact, $d=0$.  Since
 $\mu\ne 0$ by assumption, we have $h\not\equiv 0$ and thus
 $\widecheck{h} (x^{}_{0}, y^{}_{0}, z^{}_{0}) \ne 0$ for some
 $ (x^{}_{0}, y^{}_{0}, z^{}_{0}) \in \RR^d \! \times \! \TT^m \!
 \times \! \widehat{\KK}$.  From \eqref{eq:finiteU}, we get
\begin{equation}\label{eq:finiteUsection}
     \theta^{}_{\nts\widehat{H}}
     \bigl(U\cap (\RR^d \! \times \! \TT^m
     \! \times \! \{ z^{}_{0} \})\bigr) \, < \, \infty \ts .
\end{equation}
Now, for each $x\in\RR^d,y\in\TT^m$, we have
\begin{equation}\label{eq:Ff=Fh0}
    \widecheck{h} (x,y, z^{}_{0} ) \, =
    \int_{\RR^d} \int_{\ZZ^m} \int_{\KK}
    \chi^{}_x(s) \, \chi^{}_y(t) \, 
    {\chi^{}_{z^{}_0}(u)} \, h(s,t,u)
    \dd\theta^{}_{\KK}(u) \dd\theta^{}_{\ZZ^m}(t)
    \dd\theta^{}_{\RR^d}(s) \, = \,
    \widecheck{f}(x,y) \ts ,
\end{equation}
where $f \! : \, \RR^d\times\ZZ^m \xrightarrow{\quad} \CC$
is defined by
\[
   (s,t) \, \mapsto \, f(s,t) \, \defeq
   \int_{\KK} {\chi^{}_{z^{}_0}(u)} 
    \, h(s,t,u) \dd\theta^{}_{\KK}(u) \ts ,
\]
which satisfies
$\widecheck{f} (x^{}_{0}, y^{}_{0} ) = \widecheck{h} (x^{}_{0},
y^{}_{0}, z^{}_{0} ) \ne 0$.  By Lemma~\ref{lem:PW}, the function
$\widecheck{f}$ is analytic and satisfies
$\widecheck{f} \not \equiv 0$.  Thus, by Corollary~\ref{coro:PW}, the
set
 \[
     Z \, \defeq \, \{(x,y)\in\RR^d \! \times\! 
         \TT^m : \widecheck{f}(x,y)=0\}
 \]
 has measure $0$ in $\RR^d \! \times \! \TT^m$. Let 
 \[
       V \, = \, \{(x,y)\in \RR^d \! \times \! \TT^m : 
         \widecheck{h}(x,y, z^{}_{0})\neq 0\} \ts ,
 \] 
 so that, by \eqref{eq:Ff=Fh0}, we have
 $\RR^d \nts\nts \times \nts\nts \TT^m = Z\ts\ts\dot{\cup}\ts\ts V$.
 Since $\widehat{\KK}$ is discrete, $\theta_{\widehat{\KK}}$ is
 proportional to counting measure, which means
 \[
       \theta^{}_{\RR^d \times\TT^m}(V) \, = \,
       c  \, \theta^{}_{\RR^d\times\TT^m\times\widehat{\KK}}
       (V\! \times \! \{ z^{}_{0} \})
 \]
 for some $c>0$, where the right-hand side is finite as a consequence
 of Eq.~\eqref{eq:finiteUsection}. We thus get
\[
     \theta^{}_{\RR^d \times \TT^m}
       (\RR^d \! \times \! \TT^m) \, = \,
      \theta^{}_{\RR^d \times \TT^m}
      (V\ts\ts\dot{\cup}\ts\ts Z) \, = \,  
      \theta^{}_{\RR^d \times \TT^m}
     (V) \,  < \, \infty \ts ,
\]
which is only possible if $d=0$. Consequently,
$H = \ZZ^m \! \times \! \KK$ together with
$\widehat{H} = \TT^m \!  \times \! \widehat{\KK}$, and our claims
follow.
\end{proof}

\begin{remark}\label{rem QUP}
  The last part of the above proof may alternatively be shown by
  invoking the \emph{qualitative uncertainty principle} (QUP) for
  LCAGs, as nicely summarised in \cite{HOG}.  Let $\KK_0$ be the
  (connected) identity component of $\KK$. Then,
  $\RR^d \!\times\! \{0\} \!\times\! \KK_0$ is the identity component
  of $H$.  Since $f$ and $\widehat{f}$ have \mbox{finite{\ts}-measure}
  support by assumption, with $f \not\equiv 0$, the QUP fails in
  $H$. By \cite[Thm.~1]{HOG}, the identity component of $H$ must then
  be compact, thus $d=0$.  \exend
\end{remark}

Recall that the Eberlein (or volume-averaged) convolution of two
measures $\mu, \nu \in \cM^{\infty} (G)$, relative to a given van
Hove sequence $\cA$, is defined by
\[
     \mu \circledast \nu \, = \lim_{n\to\infty}
     \frac{\mu^{}_{n} \nts * \ts \nu^{}_{n}}{\vol (A_n)} \ts ,
\]
where $\mu^{}_{n}$ and $\nu^{}_{n}$ are the restrictions of $\mu$
and $\nu$ to the set $A_n$. Here, the existence of the vague limit 
is assumed, which is
always the case in our setting. An explicit proof of the following
result is given in \cite[Prop.~5.1]{CRS}, and need not be repeated
here; see also \cite[Sec.~9.4]{TAO1} and \cite{BM}.

\begin{coro}\label{coro:L2} 
  Under the conditions of Proposition~\textnormal{\ref{P1}}, which
  comprise the transformability of\/ $\mu$, the autocorrelation\/
  $\gamma \defeq \mu \circledast \widetilde{\mu} $ is well defined,
  and one has the relations
\[  
   \gamma \, = \, \dens(\cL) \, \omega^{}_{h*\widetilde{h}}
   \quad \text{and} \quad
   \widehat{\gamma} \, = \, \dens (\cL)^2 \ts\ts 
   \omega^{}_{\lvert \widecheck{h} \vert^2} \ts .
\]    
Moreover, setting\/ $S\defeq \supp( \widehat{\mu})$, we also have the
representation\/
$ \widehat{\mu} = \sum_{y \in S} \ts\widehat{\mu} ( \{y \}) \,
\delta_y$ together with\/
$ \widehat{\gamma} = \sum_{y \in S} \ts\lvert \widehat{\mu}( \{y \})
\rvert^2 \ts \delta_y $.  \qed
\end{coro}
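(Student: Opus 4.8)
The plan is to leverage the weighted model comb representation $\mu = \omega^{}_h$ from Proposition~\ref{P1} and to push everything through the internal group $H = \ZZ^m \! \times \! \KK$, so that both $\gamma$ and $\widehat{\gamma}$ are read off from convolutions of the window function $h$. As a first step I would note that, since $\gamma$ is an autocorrelation, it is positive definite and hence Fourier transformable, while its very existence as a vague limit is guaranteed by the model comb structure of $\mu$; the required averaging is carried out in detail in \cite[Prop.~5.1]{CRS}, see also \cite[Sec.~9.4]{TAO1} and \cite{BM}.

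Next I would identify $\widetilde{\mu}$ explicitly. Writing $\mu = \sum_{x\in L} h(x^\star)\,\delta^{}_x$ and using that $L = \pi^{}_G(\cL)$ is a group on which $\star$ acts as a homomorphism, so that $(-x)^\star = -(x^\star)$, a reindexing $y = -x$ gives $\widetilde{\mu} = \omega^{}_{\widetilde{h}}$ with $\widetilde{h}(z) = \overline{h(-z)}$. The central input is then the Eberlein convolution formula for two weighted model combs of the same CPS, namely $\omega^{}_h \circledast \omega^{}_{\widetilde{h}} = \dens(\cL)\,\omega^{}_{h*\widetilde{h}}$, where $h*\widetilde{h} \in \Cc(H)$ denotes convolution on $H$; this yields the first claimed identity. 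I expect this averaging formula to be the main obstacle, as it rests on the equidistribution of $L^\star$ in $H$ together with the van Hove averaging that produces precisely the factor $\dens(\cL)$, and it is exactly the content of \cite[Prop.~5.1]{CRS}.

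For the transform, I would observe that $\gamma = \dens(\cL)\,\omega^{}_g$ with $g \defeq h*\widetilde{h} \in \Cc(H)$ is again a weighted model comb, so the transformability result used in Proposition~\ref{P1}, that is \cite[Thm.~5.3]{CRS}, applies and gives $\widehat{\gamma} = \dens(\cL)^2\,\omega^{}_{\widecheck{g}}$. By the convolution theorem on $H$ together with the elementary identity $\widecheck{\widetilde{h}} = \overline{\widecheck{h}}$, which follows by substituting $z\mapsto -z$ under the reflection-invariant Haar measure, one has $\widecheck{g} = \widecheck{h}\,\widecheck{\widetilde{h}} = \lvert\widecheck{h}\rvert^2$, and hence $\widehat{\gamma} = \dens(\cL)^2\,\omega^{}_{\lvert\widecheck{h}\rvert^2}$.

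Finally, for the pointwise representation, I would use that $\widehat{\mu}$ is a pure point measure with support $S$, so that $\widehat{\mu} = \sum_{y\in S} \widehat{\mu}(\{y\})\,\delta^{}_y$ by definition. Reading off the coefficients from $\widehat{\mu} = \dens(\cL)\,\omega^{}_{\widecheck{h}}$ in the dual CPS gives $\widehat{\mu}(\{y\}) = \dens(\cL)\,\widecheck{h}(y^\star)$, so that $\lvert\widehat{\mu}(\{y\})\rvert^2 = \dens(\cL)^2\,\lvert\widecheck{h}(y^\star)\rvert^2$. Comparing this with $\widehat{\gamma} = \dens(\cL)^2\,\omega^{}_{\lvert\widecheck{h}\rvert^2}$, and noting that both measures share the support $S$ because $\lvert\widecheck{h}(y^\star)\rvert^2 \neq 0$ holds exactly when $y\in S$, yields $\widehat{\gamma} = \sum_{y\in S}\lvert\widehat{\mu}(\{y\})\rvert^2\,\delta^{}_y$, as required.
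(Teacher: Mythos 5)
Your proposal is correct and takes essentially the same approach as the paper: the paper's proof consists precisely of the citation to \cite[Prop.~5.1]{CRS} (together with \cite[Thm.~5.3]{CRS}, already invoked in Proposition~\ref{P1}), and your argument is an unpacking of exactly that material --- the Eberlein convolution formula $\omega^{}_{h} \circledast \omega^{}_{\widetilde{h}} = \dens(\cL)\, \omega^{}_{h*\widetilde{h}}$, the identity $\widecheck{h*\widetilde{h}} = \lvert \widecheck{h} \rvert^{2}$, and the identification of the point masses $\widehat{\mu}(\{y\}) = \dens(\cL)\, \widecheck{h}(y^{\star})$ in the dual CPS. The supplementary details you supply (positive definiteness of $\gamma$ yielding its transformability, the identity $\widetilde{\mu} = \omega^{}_{\widetilde{h}}$, and the support comparison for the final representation) are all sound.
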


We are now ready to formulate our first main result.

\begin{theorem}\label{thm:conj1}  
  Let\/ $ \mu \in \cM^\infty(G)$ with\/ $\mu \ne 0$ be such that\/
  $\supp(\mu)$ is contained in a Meyer set and that the FBS of\/ $\mu$
  is\/ $\cB$-sparse for some van Hove sequence\/ $\cB$ in\/
  $\widehat{G}$. Then, there is a lattice\/ $\vG$ in\/ $G$ together
  with finite sets\/ $F \subseteq G$ and\/ $F' \subseteq \widehat{G}$
  such that
\[
   \supp(\mu) \, \subseteq \, \vG+F 
   \quad \text{and} \quad
   \supp(\widehat{\mu}) \, \subseteq \, \vG^{\ts 0}+F'.
\]
\end{theorem}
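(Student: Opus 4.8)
The plan is to build everything on Proposition~\ref{P1}, which already does the hard analytic work: it reduces us to a CPS $(G,\ZZ^m\!\times\!\KK,\cL)$ with $\KK$ compact and, crucially, \emph{no} Euclidean factor (the conclusion $d=0$), and it supplies the representations $\mu=\omega^{}_h$ and $\widehat{\mu}=\dens(\cL)\,\omega^{}_{\widecheck{h}}$ with $h\in\Cc(\ZZ^m\!\times\!\KK)$, together with the finiteness $\theta^{}_{\nts\widehat{H}}(U)<\infty$ of the FBS window $U=\{z\in\TT^m\!\times\!\widehat{\KK}:\widecheck{h}(z)\neq0\}$ from Eq.~\eqref{eq:finiteU}. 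In particular, $\supp(\mu)=\oplam\bigl(\{h\neq0\}\bigr)$ and $\supp(\widehat{\mu})=\oplam(U)$ in the original and the dual CPS, respectively.

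First I would produce the lattice in $G$. Since $\KK$ is compact and $\ZZ^m$ is discrete, $\mathbb{H}_0\defeq\{0\}\times\KK$ is a compact \emph{open} subgroup of $\ZZ^m\!\times\!\KK$, so $\vG\defeq\oplam(\mathbb{H}_0)$ is at once a model set (relatively compact window with non-empty interior, hence Meyer and thus Delone) and a subgroup of $G$, because the $\star$-map is a homomorphism; a Delone subgroup is a lattice. As $h\in\Cc(\ZZ^m\!\times\!\KK)$, the projection of $\supp(h)$ to the discrete group $\ZZ^m$ is a finite set $F_0$, so $\{h\neq0\}\subseteq F_0\times\KK=\bigcup_{j\in F_0}\bigl(x_j^\star+\mathbb{H}_0\bigr)$ for suitable $x_j\in L$. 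Since $\oplam(x_j^\star+\mathbb{H}_0)=x_j+\vG$, this yields $\supp(\mu)\subseteq\vG+F$ with $F=\{x_j\}$ finite, which is the first inclusion.

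The crux is the second inclusion, and the key point is that, with $d=0$, the only non-discrete direction of $\widehat{H}=\TT^m\!\times\!\widehat{\KK}$ is the compact torus $\TT^m$, where analytic rigidity is available. For fixed $\eta\in\widehat{\KK}$, the slice $\xi\mapsto\widecheck{h}(\xi,\eta)=\sum_{t\in F_0}\xi(t)\int_{\KK}\eta(u)\,h(t,u)\dd\theta^{}_{\KK}(u)$ is a trigonometric polynomial in $\xi\in\TT^m$; by Corollary~\ref{coro:PW} (the case $d=0$ being the elementary fact that a non-zero trigonometric polynomial has a null zero set), it is either identically zero or non-zero off a $\theta^{}_{\TT^m}$-null set. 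Hence every fibre $U_\eta=\{\xi:\widecheck{h}(\xi,\eta)\neq0\}$ has measure either $0$ or the fixed positive value $\theta^{}_{\TT^m}(\TT^m)$, and since these add up to the finite number $\theta^{}_{\nts\widehat{H}}(U)$, only finitely many fibres can be non-null. Writing $E$ for this finite set of $\eta$, we get $U\subseteq\TT^m\!\times\!E$ and therefore $\supp(\widehat{\mu})=\oplam(U)\subseteq\bigcup_{\eta\in E}\oplam(\TT^m\!\times\!\{\eta\})$.

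Finally I would pass to the dual CPS. In $\widehat{H}$ the subgroup $\TT^m\!\times\!\{0\}$ is again compact and open, as $\widehat{\KK}$ is discrete, so $\vG'\defeq\oplam(\TT^m\!\times\!\{0\})$ is a lattice in $\widehat{G}$ by the argument used for $\vG$, and each non-empty $\oplam(\TT^m\!\times\!\{\eta\})$ is a single coset $y_\eta+\vG'$. Thus $\supp(\widehat{\mu})\subseteq\vG'+F'$ with $F'=\{y_\eta:\eta\in E\}$ finite. It then remains only to compare $\vG'$ with the dual lattice $\vG^{0}$, and here a direct evaluation of the annihilator pairing suffices: if $(\psi,(\xi,0))\in\cL^{0}$ and $(x,(0,k))\in\cL$, then the pairing collapses to $\psi(x)=1$, whence $\vG'\subseteq\vG^{0}$ and consequently $\vG'+F'\subseteq\vG^{0}+F'$, giving the claim. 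The main obstacle is precisely the fibre dichotomy of the third paragraph: it is exactly there that the absence of a Euclidean factor is indispensable, since over $\RR^d\!\times\!\TT^m$ a window of finite measure need not be confined to finitely many fibres, and the $\vG^{0}$-periodic structure of the Fourier--Bohr support would be lost.
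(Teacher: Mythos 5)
Your proof is correct, and up to the last step it retraces the paper's own argument: Proposition~\ref{P1}, the lattice $\vG=\oplam\bigl(\{0\}\!\times\!\KK\bigr)$ obtained from the compact open subgroup, the covering of $\supp(h)$ by finitely many fibres $\{t\}\!\times\!\KK$, the trigonometric-polynomial dichotomy on the slices $\xi\mapsto\widecheck{h}(\xi,\eta)$, the finiteness of the set of non-null fibres forced by $\theta^{}_{\nts\widehat{H}}(U)<\infty$, and the dual-side lattice $\vG'=\oplam\bigl(\TT^m\!\times\!\{0\}\bigr)$. The genuine divergence is the endgame, and it is a real simplification. The paper proves the \emph{equality} $\vG'=\vG^{\ts 0}$, whose non-trivial half $\vG^{\ts 0}\subseteq\vG'$ costs substantial work: for each $k\in\vG^{\ts 0}$ one defines $\psi(t)=\chi^{}_{k}(x)$ for $(x,t,\kappa)\in\cL$, checks well-definedness, verifies that $\psi$ is a character of $\ZZ^m$, hence of the form $\chi^{}_{\ell}$ with $\ell\in\TT^m$, and concludes $(k,-\ell,0)\in\cL^{0}$. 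You observe that the theorem asserts only inclusions, so the easy half $\vG'\subseteq\vG^{\ts 0}$ --- your one-line annihilator evaluation --- already yields $\supp(\widehat{\mu})\subseteq\vG'+F'\subseteq\vG^{\ts 0}+F'$, and nothing downstream (the subsequent lemma and Theorem~\ref{thm:trig-coeffs} use only these inclusions) requires the equality. What your route gives up is merely the sharper structural fact that the FBS lattice is exactly the dual lattice of $\vG$, which the statement does not claim. A further small point in your favour: you choose the translating sets inside $L$, respectively $L^{0}$, by density of $\pi^{}_{H}(\cL)$ --- points $x_j\in L$ with $x_j^{\star}\in\{j\}\!\times\!\KK$, and dually the $y_\eta$ --- so that each piece of the support is an honest coset; this is tighter than the paper's choices $F=\oplam(S\!\times\!\{0\})$ and $F'=\oplam(\{0\}\!\times\!J)$, whose windows have empty interior and therefore need not meet the projected lattice at all, a (repairable) subtlety your version avoids.
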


\begin{proof}
  By assumption, we have $\mu \ne 0$.  By Proposition~\ref{P1}, there
  exists a CPS $(G, H, \cL)$, with $H\defeq \ZZ^m \! \times \!\KK$,
  and some $h \in \Cc( \ZZ^m \!\times\! \KK)$ with
  $\widehat{h} \in L^1(\TT^m \!\times\! \widehat{\KK})$ continuous
  such that $\mu = \omega^{}_h $. Now, consider
\[
   \vG \, \defeq \ts  \oplam 
   \bigl( \{ 0 \} \!\times\! \KK \bigr)  .
\]
Since $H^{}_0 \defeq \{ 0 \} \!\times\! \KK $ is a subgroup of $H$,
and the $\star$-map is a group homomorphism, $\vG$ is a subgroup of
$G$. Moreover, since $H^{}_0$ is both compact and open, $\vG$ is a
Delone set. This shows that $\vG$ is a lattice in $G$.

Next, since $\supp(h)$ is compact, it can be covered by finitely many
translates of the open set $H^{}_0$. More precisely, there is a finite
set $S \subset \ZZ^m$ such that
$\supp (h) \subseteq \bigcup_{t\in S} \bigl( (t,0) + H^{}_{0}\bigr) $.
If we set $F \defeq \oplam(S \! \times \! \{0\})$, we see that $F$ is
finite and
\[
   \supp(\mu) \, = \, \oplam \bigl( \supp (h) \bigr)
   \, \subseteq \, \oplam (H^{}_{0}) +
   \oplam ( S \! \times \! \{ 0 \} )
   \, = \,  \vG + F \ts .
\]

To gain the corresponding result for $\widehat{\mu}$, we need to show
that $\supp \bigl(\widecheck{h} \bigr)$ is compact.  From the above,
we know that $\supp(h) \subseteq S \!\times\! \KK$. For $t\in S$, set
\[
   h^{}_t (\xi) \, \defeq \, h(t , \xi ) \ts ,
\]
so that $h^{}_t \in C(\KK)$ and
$h = \sum_{t \in S} 1^{}_{\{ t \}} \nts \otimes h^{}_t $. For any
$t\in S$ and $y\in\widehat{\KK}$, we have
\[
    \widecheck{h^{}_t }(y) \, =
    \int_{\KK} \chi^{}_y (u) \, h^{}_t (u) \dd\theta^{}_{\KK} (u) 
    \,\in \, \CC \ts .
\]    
Then, for arbitrary
$(x,y)\in\widehat{H}=\TT^m \!  \times\!\widehat{\KK}$, a simple
calculation shows that
\begin{equation}\label{eq:Fh}
    \widecheck{h}(x,y) \, = \sum_{t \in S}
    \chi^{}_x (t) \, \widecheck{h^{}_{t}} (y)  \ts .
 \end{equation}

 Fix $y\in\widehat{\KK}$ and define
 $g^{}_y \! : \, \TT^m \xrightarrow{\quad} \CC$ by
 $x \mapsto g^{}_y (x) = \widecheck{h}(x,y)$.  Next, for each
 $t \in S$, define $\chi^{}_{t} \! : \, \TT^m \xrightarrow{\quad} \CC$
 by $\chi^{}_{t} (x) = \chi^{}_{x} (t)$.  Note that $\chi^{}_{t}$
 simply is $t\in S\subset \ZZ^m$ viewed as a character on
 $\TT^m = \widehat{\ZZ^m}$.  Then, by Eq.~\eqref{eq:Fh}, we have
\[
    g^{}_y (x) \, = \sum_{t \in S}
    \widecheck{h^{}_{t}} (y) \, \chi^{}_{t} (x)
\]
for each $x\in\TT^m$, so that $g^{}_{y}$, for any fixed $y$, is a
trigonometric polynomial on $\TT^m$.  Applying Lemma~\ref{lem:PW} in
conjunction with Corollary~\ref{coro:PW}, we see that either
$g^{}_y \equiv 0$, or the set of zeros of $g_y$ is a null set in
$\TT^m$.  Now, consider
\[
     U_y \, \defeq \, \{ x\in\TT^m : \widecheck{h}(x,y)\neq 0\} 
     \, = \, \{x\in\TT^m : g^{}_y (x) \neq 0\} \ts .
\]
By the above, we see that either $U_y = \varnothing$ or
$\theta^{}_{\TT^m}(U_y) = 1$.
 
Since $\widehat{\KK}$ is discrete, we may repeat this process to
obtain such a set $U_y$ for each $y\in\widehat{\KK}$.  Then, for every
$y\in\widehat{\KK}$, we have either
$U_y \! \times \! \{y\} = \varnothing$ or
$\theta^{}_{\TT^m \times \KK}(U_y \!\times\! \{y\}) = 1$.  Next,
consider
\[
     J \, \defeq \, \{y\in\widehat{\KK} : U_y \neq \varnothing\} \ts .
\]
Recall from Eq.~\eqref{eq:finiteU} that the set
$U=\{z\in\TT^m \!\times \! \widehat{\KK} : \widecheck{h}(z)\neq 0\}$
has finite measure. We have
\[
     U \, = \bigcup_{y\in\widehat{\KK}} 
     U_y \!\times\! \{y\} \, = \bigcup_{y\in J} 
     U_y \! \times \! \{y\}
\]
and thus, since $\theta^{}_{\TT^m}(U_y) = 1$ for all $y\in J$ but
$\theta^{}_{\nts \widehat{H}} (U) < \infty$, we conclude that $J$ is a
finite set.  Noting that $U_y = \varnothing$ for $y\notin J$, we see
that, for any $y\notin J$, we have
\[
     \widecheck{h} (x,y) = 0 \quad \text{for all } x\in \TT^m .
\]
This implies $\supp\bigl(\widecheck{h}\bigr)\subseteq \TT^m\times J$
and, reasoning as we previously did for $\mu$, we find that
\[
    \supp(\widehat{\mu}) \, \subseteq \,
    \vG^{\ts\prime} + F^{\ts\prime} ,
\]
where $\vG^{\ts\prime}=\oplam(\TT^m\nts\nts\times\nts\nts\{0\})$ is a
lattice in $\widehat{G}$ and $F^{\prime}=\oplam(\{0\}\!\times\! J)$ is
a finite set, this time referring to the dual CPS,
$(\widehat{G}, \widehat{H}, \cL^{0})$.

To finish the proof, we need to show that
$\vG^{\ts\prime} = \vG^{\ts 0}$, where the lattice $\vG^{\ts 0}$ is
the annihilator of $\vG$. Recall that we had
$\vG= \oplam \bigl( \{ 0 \} \! \times \! \KK\bigr)$ and
$\vG^{\ts\prime}= \oplam \bigl( \TT^m \! \times \!  \{ 0 \}
\bigr)$. Let $y\in \vG^{\ts\prime}$, which means
$(y,y^{\star})\in\cL^0$ together with
$y^{\star} \in \TT^m \! \times \! \{0\}$. Similarly, $x\in \vG$ means
$(x,x^{\star})\in\cL$ with $x^{\star} \in \{ 0 \} \! \times \!
\KK$. But $(x,x^{\star}) \in \cL$ and $(y,y^{\star})\in\cL^0$ implies
$ \chi^{}_{y} (x) \, \chi^{}_{y^{\star}} (x^{\star}) = 1 $.

Now, $x^{\star} \in \{ 0 \} \! \times \! \KK$ gives us
$x^{\star} = (0,\xi)$ with $\xi \in \KK$, while
$y^{\star} \in \TT^m \! \times \! \{ 0 \}$ implies the form
$y^{\star} = (\eta, 0)$ with $\eta \in \TT^m$. Then,
$\chi^{}_{y^{\star}} (x^{\star}) = \chi^{}_{\eta} (0) \, \chi^{}_{0}
(\xi) = 1$. Employing the previous relation, we are thus left with
$\chi^{}_{y} (x) = 1$, which implies $y\in \vG^{\ts 0}$ because
$x\in \vG$ was arbitrary. Since this works for any
$y\in \vG^{\ts\prime}$, we have
$\vG^{\ts\prime} \subseteq \vG^{\ts 0}$.

To establish the converse inclusion, let $k\in \vG^{\ts 0}$ be
arbitrary but fixed, so $\chi^{}_{k} (x) =1$ for all
$x\in \vG = \oplam \bigl( \{0\}\!\times\! \KK\bigr)$. We work with the
CPS $(G,H,\cL)$ from above, and write elements of
$H=\ZZ^m \! \times \! \KK$ as $(t , \kappa)$.  Since
$\pi^{}_{H} (\cL)$ is dense in $H$ and $\{ t \} \! \times \!  \KK$ is
open in $H$ for any $t\in\ZZ^m$, we may conclude
\[
      \pi^{}_{\ZZ^m} (\cL) \, = \, \ZZ^m ,
\]
which implies that, for any $t\in\ZZ^m$, there are elements $x\in G$
and $\kappa\in\KK$ such that $(x,t,\kappa) \in \cL$.

Define the mapping $\psi \! : \, \ZZ^m \xrightarrow{\quad} \SSS^1$ by
$\psi (t) = \chi^{}_{k} (x)$, which turns out to be well
defined. Indeed, if $(x^{}_{1}, t, \kappa^{}_{1})$ and
$(x^{}_{2}, t, \kappa^{}_{2})$ are both elements of $\cL$, then so is
their difference, where we have
$(x^{}_{1} - x^{}_{2})^{\star} = (0, \kappa^{}_{1} - \kappa^{}_{2})
\in \{ 0 \} \! \times \! \KK$.  But this implies
$x^{}_{1} - x^{}_{2} \in\oplam \bigl( \{0\}\!\times\!  \KK\bigr)=
\vG$, so $\chi^{}_{k} (x^{}_{1} - x^{}_{2})=1$ due to
$k\in \vG^{\ts 0}$, and hence
$\chi^{}_{k} (x^{}_{1}) = \chi^{}_{k} (x^{}_{2})$.

Next, we show that $\psi$ defines a character on $\ZZ^m$.  Since
$\ZZ^m$ carries the discrete topology, $\psi$ is continuous.  Now, for
any $t^{}_{1}, t^{}_{2} \in \ZZ^m$, there are
$x^{}_{1}, x^{}_{2} \in G$ and $\kappa^{}_{1}, \kappa^{}_{2} \in \KK$
such that $(x^{}_i, t^{}_i, \kappa^{}_i) \in \cL$, and we get
$\psi (t^{}_i) = \chi^{}_{k} (x^{}_i)$ by definition. Since the sum of
two lattice points is again a lattice point, we also get
$\psi (t^{}_{1} + t^{}_{2}) = \chi^{}_{k} (t^{}_{1} + t^{}_{2}) =
\chi^{}_{k} (t^{}_{1}) \, \chi^{}_{k} ( t^{}_{2}) = \psi^{}_{k}
(x^{}_{1}) \, \psi^{}_{k} (x^{}_{2})$ as required.
 
Finally, since $\psi$ is a character on $\ZZ^m$, there exists an
element $\ell\in\TT^m$ such that $\psi = \chi^{}_{\ell}$. We now claim
that $(k,-\ell,0) \in \cL^0$. Indeed, for all $(x,t,\kappa) \in \cL$,
we have
\[
     \chi^{}_{k} (x) \, \chi^{}_{-\ell} (t) \, \chi^{}_{0} (\kappa)
     \, = \, \chi^{}_{k} (x) \, \overline{\chi^{}_{\ell} (t)} \, = \,
     \chi^{}_{k} (x) \, \overline{\psi (t)} \, = \,
     \chi^{}_{k} (x) \, \overline{\chi^{}_{k} (x)} \, = \, 1 \ts .
\]
This also means that
$k \in \oplam \bigl( \TT^m \! \times \! \{ 0 \} \bigr) = \vG^{\ts\prime}$, 
which completes the argument.
\end{proof}

For any fixed $y \in J$, with the set $J$ from the proof of
Theorem~\ref{thm:conj1}, the function defined by
$x \mapsto \widecheck{h} (x,y)$ is a trigonometric polynomial on
$\TT^m$. In fact, we can say more.

\begin{lemma}
  Let \/$\vG\subseteq G$ be a lattice and\/ $\mu$ a
  Fourier-transformable measure on\/ $G$ such that\/
  $\supp(\mu)\subseteq \vG+F$ and\/
  $\supp (\widehat{\mu})\subseteq \vG^{\ts 0} + F^{\prime}$ for finite
  sets\/ $F\subseteq G$ and\/ $F^{\prime}\subseteq\widehat{G}$. Then,
  there is a set\/
  $ \{ \tau^{}_{1}, \ldots , \tau^{}_{N} \} \subseteq F$ such that one
  can represent\/ $\mu$ as
\[ 
     \mu \, = \sum_{j=1}^N\sum_{x\in \vG+\tau^{}_j} 
        P_j (x) \, \delta_x \ts ,
\]
   where each\/ $P_j$ is a trigonometric polynomial on\/ $G$. 
\end{lemma}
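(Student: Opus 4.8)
The plan is to split $\mu$ along the cosets of $\vG$ and then to read off, from the hypothesis on $\supp(\widehat{\mu})$, that the coefficient sequence on each coset has only finitely many spectral values, so is a trigonometric polynomial. First I would record that $\mu$ is pure point with bounded coefficients: being Fourier transformable it is translation bounded, and its support lies in the uniformly discrete set $\vG+F$, so $\mu=\sum_{x\in\vG+F}\mu(\{x\})\,\delta_x$ with $\sup_x\lvert\mu(\{x\})\rvert<\infty$. Discarding the cosets of $\vG$ that $\supp(\mu)$ does not meet, I keep representatives $\tau^{}_1,\dots,\tau^{}_N\in F$ lying in pairwise distinct cosets and write $\mu=\sum_{j=1}^N\delta^{}_{\tau^{}_j}*\nu^{}_j$, where $\nu^{}_j=\sum_{\lambda\in\vG}c^{}_j(\lambda)\,\delta^{}_\lambda$ is supported on $\vG$ and $c^{}_j(\lambda)=\mu(\{\lambda+\tau^{}_j\})$.

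Next I would pass to the dual side. Each $\nu^{}_j$, being a coset restriction of the transformable $\mu$, is again transformable, and the transform of a measure supported on the lattice $\vG$ is $\vG^{0}$-periodic, by a Poisson-summation type fact. Since convolution with $\delta^{}_{\tau^{}_j}$ only multiplies by the character $\overline{\chi^{}_{(\cdot)}(\tau^{}_j)}$ and hence does not move supports, one has $\widehat{\mu}=\sum_{j=1}^N\overline{\chi^{}_{(\cdot)}(\tau^{}_j)}\,\widehat{\nu^{}_j}$ with every $\widehat{\nu^{}_j}$ a $\vG^{0}$-periodic measure. Writing $\pi\colon\widehat{G}\to\widehat{G}/\vG^{0}\eqdef\widehat{\vG}$ for the quotient onto the compact dual and $\overline{F^{\prime}}\defeq\pi(F^{\prime})$, which is finite, the support hypothesis becomes $\supp(\widehat{\mu})\subseteq\pi^{-1}(\overline{F^{\prime}})$.

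The engine is an independence-of-characters argument, the coordinate-free analogue of the $\TT^m$-computation in the proof of Theorem~\ref{thm:conj1}. Fixing an open set $V\subseteq\widehat{\vG}$ with $V\cap\overline{F^{\prime}}=\varnothing$ and a fundamental cell of $\vG^{0}$ over $V$, I would test the identity $\widehat{\mu}\rvert_{\pi^{-1}(V)}=0$ against translates $T^{}_{-k}g$ with $k\in\vG^{0}$ and $g$ supported in that cell; using the $\vG^{0}$-periodicity of the $\widehat{\nu^{}_j}$, the vanishing turns into $\sum_{j}\overline{k(\tau^{}_j)}\,b^{}_j(g)=0$ for all $k\in\vG^{0}$, with $b^{}_j(g)$ independent of $k$. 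Because $\tau^{}_1,\dots,\tau^{}_N$ lie in distinct cosets of $\vG$, the maps $k\mapsto\overline{k(\tau^{}_j)}$ are pairwise distinct characters of $\vG^{0}$, namely the images of the $\tau^{}_j$ under $G/\vG\simeq\widehat{\vG^{0}}$, hence linearly independent; this forces $b^{}_j(g)=0$ for every such $g$, so $\supp(\widehat{\nu^{}_j})\subseteq\vG^{0}+F^{\prime}$ for each $j$. Being a $\vG^{0}$-periodic measure on the uniformly discrete set $\vG^{0}+F^{\prime}$, each $\widehat{\nu^{}_j}$ is then pure point and a finite combination of translated dual-lattice combs $\delta^{}_{\vG^{0}+\sigma}$ with $\sigma$ running through representatives of $\overline{F^{\prime}}$. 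Inverting with the PSF~\eqref{eq:PSF} exhibits $c^{}_j$ as a finite combination of the characters of $\vG$ coming from $\overline{F^{\prime}}$, hence a trigonometric polynomial; absorbing the factor $\chi^{}_{(\cdot)}(-\tau^{}_j)$ rewrites $c^{}_j(\lambda)$ as $P^{}_j(\lambda+\tau^{}_j)$ for a trigonometric polynomial $P^{}_j$ on $G$, and summing over $j$ gives the claimed representation.

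I expect the main obstacle to be the disentangling step: distinct coset contributions to $\widehat{\mu}$ may overlap on $\vG^{0}+F^{\prime}$, so the localisation of each $\widehat{\nu^{}_j}$ cannot be read off directly but must be extracted from the linear independence of the characters $k\mapsto k(\tau^{}_j)$. A secondary care-point is the transformability of the coset restrictions $\nu^{}_j$, which I would justify from the mutual separation of the cosets $\vG+\tau^{}_j$ inside the uniformly discrete set $\vG+F$.
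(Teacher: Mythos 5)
Your route is genuinely different from the paper's: the paper never splits $\mu$ along cosets, but instead applies a structural result of Strungaru \cite[Rem.~5]{Nicu2} to the measure $\widehat{\mu}$, obtaining a factorisation $\mu^{\dagger} = \bigl(\sum_{x\in\vG}\sum_{\chi\in F'}\chi(x)\,\delta_x\bigr)\ast\nu$ with $\nu$ a \emph{finite} measure, and then extracts the trigonometric polynomials by an explicit convolution computation. Your dual-side engine is correct as far as it goes: the functions $k\mapsto k(\tau_j)$ are pairwise distinct characters of $\vG^{\ts 0}$ because $\tau_i-\tau_j\notin(\vG^{\ts 0})^0=\vG$, distinct characters are linearly independent, a transformable measure supported on $\vG$ has $\vG^{\ts 0}$-periodic transform, and a $\vG^{\ts 0}$-periodic measure supported in finitely many $\vG^{\ts 0}$-cosets is a finite combination $\sum_\sigma w_\sigma\,\delta_{\vG^{0}+\sigma}$, which inverts via \eqref{eq:PSF} to trigonometric-polynomial coefficients. (Your opening claim that Fourier transformability implies translation boundedness is false in general --- transformability only forces $\widehat{\mu}$ to be translation bounded --- but it is also inessential: pure-pointness of $\mu$ follows from the local finiteness of $\vG+F$ alone, and bounded coefficients are never actually used.)

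The genuine gap is the step you flag yourself: transformability of the coset restrictions $\delta_{\tau_j}\ast\nu_j=\mu|_{\vG+\tau_j}$, without which the measures $\widehat{\nu_j}$ that your engine manipulates do not exist. Your proposed justification, ``mutual separation of the cosets inside the uniformly discrete set $\vG+F$'', cannot work, because restriction to a well-separated subset does not preserve transformability: $\delta_{\ZZ}$ on $\RR$ is transformable by \eqref{eq:PSF}, and $\NN_0$ is separated from its complement in $\ZZ$ by distance $1$, yet $\delta_{\NN_0}$ is \emph{not} transformable (its distributional Fourier transform contains a principal-value part, hence is not a measure; conclude via \cite{Nicu}). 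What saves the step is not separation but the lattice-coset structure. The images $\bar\tau_1,\ldots,\bar\tau_N$ are finitely many distinct points of the compact group $G/\vG$, whose dual group is $\vG^{\ts 0}$, so Lagrange interpolation with characters produces a trigonometric polynomial $q_j=\sum_{k\in S_j}c_k\ts k$ with frequencies $S_j\subset\vG^{\ts 0}$ that is constant on $\vG$-cosets, equal to $1$ on $\vG+\tau_j$ and to $0$ on the other cosets $\vG+\tau_i$; then $\mu|_{\vG+\tau_j}=q_j\ts\mu$, and since multiplication by a character merely translates the transform, $q_j\ts\mu$ is transformable with $\widehat{q_j\ts\mu}=\sum_{k\in S_j}c_k\,T_k\ts\widehat{\mu}$, supported in $\vG^{\ts 0}+(S_j+F')$. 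Note that this fix also makes your independence argument optional: periodicity of $\widehat{\nu_j}$ together with support in \emph{finitely many} $\vG^{\ts 0}$-cosets already forces $\widehat{\nu_j}=\sum_\sigma w_\sigma\,\delta_{\vG^{0}+\sigma}$, and the PSF inversion finishes the proof; the character-independence engine only adds the sharper, but unneeded, localisation $\supp(\widehat{\nu_j})\subseteq\vG^{\ts 0}+F'$.
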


\begin{proof}
  Given a lattice $\vG$ and a finite set $F$, there exists a minimal
  finite set, $F_0\subseteq F$ say, such that $\vG+F_0=\vG+F$.
  Without loss of generality, we may assume that $F$ and $F^{\prime}$
  are minimal in this sense.  Then, applying \cite[Rem.~5]{Nicu2} to
  the measure $\gamma = \widehat{\mu}$, we gain the existence of a
  finite measure $\nu$ on $G$ such that
\[ 
    \mu^{\dagger} \, = \, \widehat{\widehat{\mu}\ts}
    \, = \, \biggl(\,\sum_{x\in \vG} 
    \sum_{\chi\in F^{\prime}} \chi(x) \,
    \delta_x \! \biggr) \nts \ast \nu \ts ,
\]
where $\mu^{\dagger} (g) \defeq \mu (g \circ I)$ with $I (x) = -x$, so
$(\mu^{\dagger})^{\dagger} = \mu$ and
$(\mu*\nu)^{\dagger} = \mu^{\dagger} * \nu^{\dagger}$.  Consequently,
with $(\delta_x)^{\dagger} = \delta_{-x}$ and
$\chi (-x) = \overline{\chi (x)}$, one gets
\[ 
   \mu \, = \biggl( \, \sum_{x\in \vG} \sum_{\chi\in F^{\prime}}
    \overline{\chi(x)}\, \delta_x \! \biggr) \nts \ast \nu^{\dagger}  .
\]

Define the measures
$ \nu^{}_1 \defeq \sum_{x\in \vG+F} \nu^{\dagger} \bigl( \{x\} \bigr)
\ts \delta_x$ and $\nu^{}_2\defeq \nu^{\dagger} \nts -
\nu^{}_1$. Then,
\[ 
   \mu \, =  \, \biggl( \, \sum_{x\in \vG} \sum_{\chi\in F^{\prime}}
   \overline{\chi(x)} \, \delta^{}_x \! \biggr)
   \nts \ast \nu^{}_1 \: + \,
   \biggl(\, \sum_{x\in \vG} \sum_{\chi\in F^{\prime}}
    \overline{\chi(x)} \, \delta^{}_x \! \biggr) \nts \ast \nu^{}_2 
    \; \eqdef \; \mu^{}_1 + \mu^{}_2 \ts .
\]
Since $\mu$ and $\mu^{}_1$ are supported in $\vG+F$, we have
$\supp (\mu^{}_2)\subseteq \vG+F$. Observe that $\nu^{}_2 (\{x\}) = 0$
for all $x\in \vG+F$ by construction, which implies
$\mu^{}_2 (\{x\}) = 0$ for all $x\in \vG+F$ by a simple
calculation. Consequently, $\mu^{}_2 = 0$ as a measure, and we have
\[ 
   \mu \, = \, \biggl( \, \sum_{x\in \vG} \sum_{\chi\in F^{\prime}}
    \overline{\chi(x)}\, \delta^{}_x \! \biggr) \nts \ast \nu^{}_1 \ts ,
\]
where $\nu^{}_1$ is a finite pure point measure with
$\supp (\nu^{}_{1} ) \subseteq \vG+F$.

Now, let $F = \{\tau^{}_1, \ldots , \tau^{}_N \}$.  Then, recalling
$\delta_x * \delta_y = \delta_{x+y}$, we can explicitly write
\[
   \nu^{}_1  \, = \sum_{j=1}^N \biggl( \, 
      \sum_{x\in \vG} \nu^{}_1 \bigl( \{x+\tau_j\} \bigr)
          \, \delta^{}_{x} \! \biggr) \nts \ast\delta^{}_{\tau_j}
     \, = \sum_{j=1}^N \eta^{}_j \ast\delta_{\tau_j} 
\]
with
$\eta^{}_j\defeq\sum_{x\in \vG} \nu^{}_1 \bigl( \{x+\tau_j\} \bigr)
\delta_{x}$.  Note that each $\eta^{}_j$ is supported inside $\vG$ and
is finite, meaning that
$\sum_{x\in \vG} \big\lvert \eta^{}_j \bigl( \{x\} \bigr) \big\rvert <
\infty $.  This finiteness allows us to choose (and change) the order
of summation in what follows.  Now, decompose $\mu$ as
$\mu = \sum_{j=1}^{N} \vartheta^{}_{j} * \delta^{}_{\tau_j}$ with
\[ 
   \vartheta^{}_{j} \, = \, \biggl( \, \sum_{x\in \vG}
   \sum_{\chi\in F^{\prime}} \overline{\chi(x)}\,
    \delta^{}_{x}\! \biggr)\nts \ast \eta^{}_{j} \ts .
\]
Then, with
$\eta^{}_{j} \bigl( \{ x \} \bigr) = \nu^{}_{1} \bigl( \{ x +
\tau^{}_{j} \} \bigr)$ for $x\in \vG$, we obtain
\[
 \begin{split}
     \vartheta^{}_{j} \, & = 
     \sum_{y\in \vG} \eta^{}_{j} \bigl( \{ y \} \bigr)
     \sum_{x\in \vG}  \sum_{\chi\in F^{\prime}} \overline{\chi(x)}
     \, \delta^{}_{x+y} \,
   = \sum_{z\in \vG} \sum_{\chi\in F^{\prime}} \sum_{y\in \vG} 
         \eta^{}_{j} \bigl( \{ y \} \bigr) \chi(y) \,
         \overline{\chi(z)} \, \delta^{}_{z} \ts ,
 \end{split}
\]
where we have used a change of variable transformation in the lattice
$\vG$ together with the relation $\vG-\vG=\vG$.  With
$a^{}_{\chi} \defeq \sum_{y\in \vG} \eta^{}_{j} \bigl( \{ y \} \bigr)
\chi(y)$ for $\chi \in F'$, we get
\[ 
   \vartheta^{}_{j}  \, = \sum_{z\in \vG} \sum_{\chi\in F^{\prime}} 
   a^{}_{\chi} \,  \overline{\chi}(z) \, \delta^{}_z  \, = \sum_{z\in \vG}
   P^{}_{j} (z + \tau^{}_{j} ) \, \delta^{}_{z} \ts ,
\]
where
$P^{}_{j} \defeq \sum_{\chi\in F^{\prime}} a^{}_{\chi} \, \chi
(\tau_j) \, \overline{\chi}$ is a finite linear combination of
characters from $\widehat{G}$, and thus a trigonometric polynomial on
$G$. Consequently, we also have
\[
    \vartheta^{}_{j} \ast \delta^{}_{\tau_j} \,
     = \! \sum_{z\in \vG+\tau_j} \! P^{}_{j} (z) \, \delta^{}_{z}  \ts .
\]
Repeating this construction for each $j$, we find
\[
   \mu \, = \sum_{j=1}^N \vartheta^{}_{j} \ast \delta^{}_{\tau^{}_j}
   \, = \sum_{j=1}^{N} \sum_{z\in \vG+\tau_j} \! P^{}_{j} (z) \, 
   \delta^{}_{z} \ts ,
\]
which was our original claim.
\end{proof}

Now, consider a measure $\mu\in\cM^{\infty} (G)$ with Meyer set
support that is Fourier transformable. By \cite[Thm.~5.9]{CRS} or
\cite[Thm.~4.9.32]{MoSt}, we know that $\widehat{\mu}$ is
transformable as well. Then, inter\-changing the roles of $G$ and
$\widehat{G}$ as well as those of $\mu$ and $\widehat{\mu}$, the
previous result can be derived for $\widehat{\mu}$, this time with
$\{ \sigma^{}_1, \ldots , \sigma^{}_{\nts M} \} \subseteq F'$. This
leads to the following general statement.

\begin{theorem}\label{thm:trig-coeffs}
  Under the conditions of Theorem~\textnormal{\ref{thm:conj1}}, we can
  write
\[
    \mu \, = \sum_{i=1}^N \sum_{x \in \vG+ \tau^{}_{i}} \!
     P^{}_{i} (x) \, \delta^{}_{x} \qquad \text{and} \qquad
    \widehat{\mu} \, = \sum_{j=1}^{M} 
    \sum_{y \in \vG^{\ts 0}+ \sigma^{}_j} \! Q^{}_{j} (y) \, \delta^{}_{y} \ts ,
\]
where each\/ $P^{}_{i}$, respectively\/ $Q^{}_{j}$, is a trigonometric
polynomial on the group\/ $G$, respectively\/ $\widehat{G}$, while\/
$N$ and\/ $M$ are the cardinalities of the minimally chosen finite
sets\/ $F$ and\/ $F'$.  \qed
\end{theorem}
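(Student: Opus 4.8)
The plan is to obtain both representations by applying the preceding lemma twice, once to $\mu$ and once to $\widehat{\mu}$, so that the entire statement becomes a bookkeeping corollary. First I would invoke Theorem~\ref{thm:conj1} to produce the lattice $\vG\subseteq G$ together with the finite sets $F\subseteq G$ and $F'\subseteq\widehat{G}$ satisfying $\supp(\mu)\subseteq\vG+F$ and $\supp(\widehat{\mu})\subseteq\vG^{\ts 0}+F'$. Following the lemma's proof, I would replace $F$ and $F'$ by minimal choices giving the same unions of lattice translates, and set $N=\card(F)$ and $M=\card(F')$. Since $\mu$ is Fourier transformable by hypothesis and both support conditions hold, the lemma applies verbatim and yields the first representation $\mu=\sum_{i=1}^N\sum_{x\in\vG+\tau_i}P_i(x)\,\delta_x$ with each $P_i$ a trigonometric polynomial on $G$ and $\{\tau_1,\ldots,\tau_N\}\subseteq F$; minimality of $F$ forces equality here and hence the stated cardinality.

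For the transform, I would first secure the transformability of $\widehat{\mu}$: as $\mu$ has Meyer set support and is transformable, \cite[Thm.~5.9]{CRS} (equivalently \cite[Thm.~4.9.32]{MoSt}) guarantees that $\widehat{\mu}$ is transformable, with $\widehat{\widehat{\mu}}=\mu^{\dagger}$. The idea is then to run the same lemma in the dual setting, reading $\widehat{G}$ as the ambient group, $\widehat{\mu}$ as the measure, and $\vG^{\ts 0}$ as the lattice. Under the Pontryagin identification $\widehat{\widehat{G}}\cong G$ one has $(\vG^{\ts 0})^{0}=\vG$, so the hypothesis on the measure's support becomes $\supp(\widehat{\mu})\subseteq\vG^{\ts 0}+F'$, while the hypothesis on its transform's support becomes $\supp(\widehat{\widehat{\mu}})=\supp(\mu^{\dagger})\subseteq\vG+(-F)$, using $-\vG=\vG$. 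Both are finite unions of translates of a lattice, so the lemma delivers $\widehat{\mu}=\sum_{j=1}^M\sum_{y\in\vG^{\ts 0}+\sigma_j}Q_j(y)\,\delta_y$ with each $Q_j$ a trigonometric polynomial on $\widehat{G}$ and $\{\sigma_1,\ldots,\sigma_M\}\subseteq F'$.

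The content here is genuinely modest, since the heavy lifting sits in Theorem~\ref{thm:conj1} and in the preceding lemma. The only points needing care are the bookkeeping ones: confirming that the minimal $F,F'$ give precisely the claimed cardinalities $N,M$, and verifying the biduality identifications $\widehat{\widehat{G}}\cong G$ and $(\vG^{\ts 0})^{0}=\vG$ that allow the lemma to be applied symmetrically to $\widehat{\mu}$. I expect this duality step—ensuring that the transform of $\widehat{\mu}$ is again supported on finitely many translates of the relevant lattice, so that the lemma's hypotheses are truly met—to be the only place where one must be slightly attentive, but it follows directly from standard Pontryagin duality together with the inclusion $\supp(\mu)\subseteq\vG+F$ already in hand.
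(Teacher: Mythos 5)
Your proposal is correct and follows essentially the same route as the paper: apply Theorem~\ref{thm:conj1} and the preceding lemma to $\mu$, then use \cite[Thm.~5.9]{CRS} (or \cite[Thm.~4.9.32]{MoSt}) to get transformability of $\widehat{\mu}$ and apply the lemma again with the roles of $G$ and $\widehat{G}$, and of $\mu$ and $\widehat{\mu}$, interchanged. The duality bookkeeping you spell out ($\widehat{\widehat{G}}\cong G$, $(\vG^{\ts 0})^{0}=\vG$, $\supp(\mu^{\dagger})\subseteq\vG+(-F)$) is exactly what the paper leaves implicit in the phrase ``interchanging the roles''.
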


\begin{remark}
  Note that the polynomials $P^{}_i$ and $Q^{}_j$ in
  Theorem~\ref{thm:trig-coeffs} are not unique. Indeed, if $\chi$ is
  any character that is constant on $\vG+\tau^{}_{i}$ (meaning
  $\chi \in \vG^{\ts 0}$) and if $c$ is the corresponding constant,
  then $(\overline{c} \ts\ts \chi ) P^{}_{j}$ is another polynomial
  that agrees with $P_j$ on $\vG+\tau^{}_{i}$. A (somewhat) canonical
  choice for the polynomials can now be made as follows.

  Given polynomials $P^{}_i$ such that the first relation in
  Theorem~\ref{thm:trig-coeffs} holds, there exist characters
  $\chi^{}_{1}, \ldots , \chi^{}_{M}$ and coefficients $c^{}_{ij}$
  with $1\leqslant i \leqslant N$ and $1\leqslant j \leqslant M$ such
  that $ P^{}_{i} = \sum_{j=1}^{M} c^{}_{ij} \ts\ts \chi^{}_{j}$.
  This is possible because the $\chi^{}_j$ can comprise \emph{all}
  characters which appear in the polynomials $P^{}_i$, then with the
  possibility that some of the coefficients $c^{}_{ij}$ vanish.

Now, using $\psi^{}_{\tau^{}_{i}} (\chi) \defeq \chi (\tau^{}_{i})$,
a simple computation gives
\[
\begin{split}  
  \widehat{\mu} \, & = \, \biggl( \sum_{i=1}^{N} \,
  \biggl( \sum_{j=1}^{M} c^{}_{ij} \, \chi^{}_{j} \biggr)
  \bigl( \delta^{}_{\vG} * \delta^{}_{\tau^{}_{i}} \bigr)
  \biggr)^{\!\widehat{\phantom{vv}}} 
   = \: \dens (\vG) \sum_{i=1}^{N}\, \biggl( \sum_{j=1}^{M}
  c^{}_{ij} \, \delta^{}_{\chi^{}_{j}} \biggr) * \bigl(
  \psi^{}_{\tau^{}_{i}} \, \delta^{}_{\vG^{0}}\bigr) \\[2mm]
  & = \, \dens (\vG) \sum_{i=1}^{N} \sum_{j=1}^{M}
  c^{}_{ij} \, \chi^{}_{j} (\tau^{}_{i})\,
  \psi^{}_{\tau^{}_{i}} \ts\ts \delta^{}_{\vG^{0}+\chi^{}_{j}}
  \, = \sum_{j=1}^{M} \biggl( \sum_{i=1}^{N} \dens (\vG) \,
  c^{}_{ij} \, \chi^{}_{j} (\tau^{}_{i}) \,
  \psi^{}_{\tau^{}_{i}} \biggr)
  \delta^{}_{\vG^{0} + \chi^{}_{j}} .
\end{split}  
\]
We thus see that there are coefficients $c^{}_{ij}$, translations
$\tau^{}_{1}, \ldots, \tau^{}_{N}$ and characters
$\chi^{}_{1}, \ldots , \chi^{}_{M}$ such that we get
\[
  \mu \, = \sum_{i=1}^{N} P^{}_{i} \, \delta^{}_{\vG + \tau^{}_{i} }
  \quad \text{and} \quad \widehat{\mu} \, =
  \sum_{j=1}^{M} Q^{}_{j} \, \delta^{}_{\vG^{0} + \chi^{}_{j} }
\]
for the choice
\[
  P^{}_{i} \, = \sum_{j=1}^{M} c^{}_{ij} \ts\ts \chi^{}_{j}
  \quad \text{and} \quad Q^{}_{j} \, = \, \dens (\vG)
  \sum_{i=1}^{N} c^{}_{ij} \, \chi^{}_{j} (\tau^{}_{i}) \,
  \psi^{}_{\tau^{}_{i}} \ts ,
\]
which seems to be a reasonable standardisation. \exend
\end{remark}

The results of this section may be applied more generally.  Firstly,
we consider a transformable measure $\mu$ whose Fourier transform may
have a continuous part. We refer to the pure point part of a measure
$\nu$ by $\nu_{\mathsf{pp}}$, and use $\mu_{\mathsf{s}}$ and
$\mu^{}_0$ for the strongly almost periodic and null-weakly almost
periodic parts of a weakly almost periodic measure $\mu$; see
\cite[Sec.~4.10]{TAO2} for definitions and properties around the
underlying Eberlein decomposition,
$\mu = \mu_{\mathsf{s}} + \mu^{}_{0}$.

\begin{coro}\label{coro:mixed-FT}
  Let\/ $\mu$ be a translation-bounded, Fourier-transformable measure
  on\/ $G$ such that\/ $\ts\supp(\mu)$ is a subset of a Meyer set and
  that\/ $\ts\supp(\widehat{\mu}_{\mathsf{pp}})\neq\varnothing$ is\/
  $\cB$-sparse for some van Hove sequence\/ $\cB$ in\/
  $\widehat{G}$. Then, there exists a lattice\/ $\vG$ together with
  finite sets\/ $F=\{\tau^{}_1, \ldots, \tau^{}_N\}$ and\/ $F^{}_{0} $
  in\/ $G$ such that
\[
    \mu_{\mathsf{s}} \, = 
    \sum_{i=1}^N \sum_{x \in \vG+ \tau^{}_{i}}
     P^{}_i (x) \, \delta^{}_x  \quad \textrm{and} \quad
    \supp(\mu^{}_0) \, \subseteq \, \vG+F^{}_{0} \ts .
\]
Moreover, one has\/
$\supp (\widehat{\mu}^{}_{\mathsf{pp}}) \subseteq \vG^{\ts 0} + F'$ for
some finite set\/ $F' \subseteq \widehat{G}$.
\end{coro}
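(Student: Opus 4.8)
The plan is to reduce the statement to Theorems~\ref{thm:conj1} and \ref{thm:trig-coeffs} by passing to the strongly almost periodic part $\mu_{\mathsf{s}}$ of $\mu$. Since $\supp(\mu)$ lies in a Meyer set $\vL$ and $\mu$ is transformable, its transform $\widehat{\mu}$ is itself transformable by \cite[Thm.~5.9]{CRS} (or \cite[Thm.~4.9.32]{MoSt}), and $\mu$ is weakly almost periodic; by \cite[Sec.~4.10]{TAO2} and \cite{ST}, the Eberlein decomposition $\mu = \mu_{\mathsf{s}} + \mu^{}_{0}$ is then compatible with the Fourier transform, in the sense that $\widehat{\mu_{\mathsf{s}}} = \widehat{\mu}_{\mathsf{pp}}$ and $\widehat{\mu^{}_{0}} = \widehat{\mu}_{\mathsf{c}}$. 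In particular, both parts are translation bounded, and $\mu_{\mathsf{s}} \neq 0$ because $\widehat{\mu}_{\mathsf{pp}} = \widehat{\mu_{\mathsf{s}}} \neq 0$ by hypothesis. Thus $\mu_{\mathsf{s}}$ is strongly almost periodic with $\cB$-sparse FBS.

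The decisive step, which I expect to be the main obstacle, is to control the supports of the two Eberlein parts: a priori the decomposition is non-local, so $\supp(\mu_{\mathsf{s}})$ need not be contained in $\supp(\mu)$. Here I would invoke the structure theory of weakly almost periodic measures with Meyer set support from \cite{BM,NS11}, according to which the decomposition acts only on the internal weight data within a fixed cut and project scheme for $\vL$, and hence does not enlarge the support; this yields $\supp(\mu_{\mathsf{s}}) \subseteq \vL$. Granting this, $\mu_{\mathsf{s}}$ satisfies all hypotheses of Theorem~\ref{thm:conj1}, so applying Theorems~\ref{thm:conj1} and \ref{thm:trig-coeffs} to $\mu_{\mathsf{s}}$ produces a lattice $\vG$, a finite set $F = \{ \tau^{}_1, \ldots, \tau^{}_N \}$, and trigonometric polynomials $P^{}_1, \ldots, P^{}_N$ with $\mu_{\mathsf{s}} = \sum_{i=1}^N \sum_{x \in \vG + \tau^{}_i} P^{}_i(x)\, \delta^{}_x$, together with $\supp(\widehat{\mu}_{\mathsf{pp}}) = \supp(\widehat{\mu_{\mathsf{s}}}) \subseteq \vG^{\ts 0} + F'$ for a finite set $F' \subseteq \widehat{G}$.

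It remains to locate $\supp(\mu^{}_{0})$. Running the proof of Theorem~\ref{thm:conj1} via Proposition~\ref{P1} with $\vL$ as the ambient Meyer set, the relevant scheme is $(G, \ZZ^m \! \times \! \KK, \cL)$ with $\vG = \oplam(\{ 0 \} \! \times \! \KK)$, and the qualitative uncertainty principle forces the Euclidean factor to be absent; here the hypothesis $\mu_{\mathsf{s}} \neq 0$ is essential, as it is precisely what rules out a genuine $\RR^d$-direction in the internal group. Consequently $\vL \subseteq \oplam(W)$ for a compact window $W \subseteq \ZZ^m \! \times \! \KK$, and since $W$ is compact while $\ZZ^m$ is discrete, $W$ meets only finitely many of the open sets $\{ t \} \! \times \! \KK$; choosing one coset representative for each, one obtains a finite set $F^{}_{0}$ with $\vL \subseteq \vG + F^{}_{0}$. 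Because $\supp(\mu^{}_{0}) \subseteq \supp(\mu) \cup \supp(\mu_{\mathsf{s}}) \subseteq \vL$, this gives $\supp(\mu^{}_{0}) \subseteq \vG + F^{}_{0}$, which completes the argument.
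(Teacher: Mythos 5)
The first two-thirds of your argument coincides with the paper's proof: the Eberlein decomposition is compatible with the Fourier transform, so $\widehat{\mu_{\mathsf{s}}} = \widehat{\mu}_{\mathsf{pp}} \neq 0$ forces $\mu_{\mathsf{s}} \neq 0$ with $\cB$-sparse FBS; the decomposition preserves Meyer set support, i.e.\ $\supp(\mu_{\mathsf{s}}), \supp(\mu^{}_0) \subseteq \vL$ for a model set $\vL \supseteq \supp(\mu)$ (this is exactly the fact you flag as the decisive step; the paper cites \cite{NS-weight,Nicu2} for it, not \cite{BM,NS11}, but you identified the right obstacle); and Theorems~\ref{thm:conj1} and \ref{thm:trig-coeffs} applied to $\mu_{\mathsf{s}}$ then give the trigonometric representation and the containment of $\supp(\widehat{\mu}_{\mathsf{pp}})$.

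The gap is in your final paragraph, where you locate $\supp(\mu^{}_0)$. The CPS $(G, \ZZ^m \! \times \! \KK, \cL)$ in Proposition~\ref{P1} is an \emph{output} of \cite[Thm.~5.5.2]{NS11} applied to $\mu_{\mathsf{s}}$: it is constructed from $\supp(\mu_{\mathsf{s}})$ and its almost periods, and nothing guarantees that the possibly larger Meyer set $\vL$ embeds into a model set of that particular scheme --- points of $\vL$ need not even lie in $L = \pi^{}_{G}(\cL)$, so the claim ``$\vL \subseteq \oplam(W)$ for a compact window $W$'' is unavailable. A concrete test case: $\mu = \delta^{}_{\ZZ} + \delta^{}_{1/2}$ on $\RR$ satisfies all hypotheses of the corollary, with $\mu_{\mathsf{s}} = \delta^{}_{\ZZ}$, $\mu^{}_0 = \delta^{}_{1/2}$ and $\vL = \frac{1}{2}\ZZ$; the scheme furnished for $\delta^{}_{\ZZ}$ has $L = \ZZ$ and trivial internal group, and $\frac{1}{2}\ZZ \not\subseteq \ZZ$, so no window whatsoever captures $\vL$. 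One cannot simply ``rerun'' Proposition~\ref{P1} with $\vL$ as ambient set, because the scheme is not a free parameter of that proposition; a simultaneous embedding (one CPS in which $\mu_{\mathsf{s}}$ is a weighted model comb \emph{and} $\vL$ lies in a model set) would need to be constructed, which you do not do. The paper avoids this entirely with a soft covering argument: since $\mu_{\mathsf{s}} \neq 0$ is strongly almost periodic, $\supp(\mu_{\mathsf{s}})$ is relatively dense, so \cite[Lemma~5.5.1]{NS11} yields a finite set $F^{}_1$ with $\vL \subseteq \supp(\mu_{\mathsf{s}}) + F^{}_1$; combining with $\supp(\mu_{\mathsf{s}}) \subseteq \vG + F$ from the first part gives $\supp(\mu^{}_0) \subseteq \vL \subseteq \vG + (F + F^{}_1)$, which is the desired containment with $F^{}_0 = F + F^{}_1$.
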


\begin{proof}
  Since $\mu$ has Meyer set support, then so does $\mu_{\mathsf{s}}$,
  as follows from \cite{NS-weight,Nicu2}.  Now, the first part of the
  claim becomes a consequence of Theorem~\ref{thm:trig-coeffs}.  For
  the second part, we know that $\supp(\mu)$ can be embedded inside a
  model set $\vL\subseteq G$ with compact window. Then,
  $\supp(\mu^{}_0), \ts \supp(\mu_{\mathsf{s}}) \subseteq \vL$.  Since
  $\mu_{\mathsf{s}}$ is strongly almost periodic and non-trivial,
  $\supp(\mu_{\mathsf{s}})$ is relatively dense. So, applying
  \cite[Lemma~5.5.1]{NS11}, we find a finite set,
  $F^{}_1 \subseteq G$, such that
\[
    \vL \, \subseteq \, \supp(\mu_{\mathsf{s}})+ F^{}_1 \ts .
\]
From the first part, $\supp(\mu_{\mathsf{s}}) \subseteq \vG+F$, so we
have
\[
    \supp(\mu^{}_0) \, \subseteq \, \vL
    \, \subseteq \, \supp(\mu_{\mathsf{s}}) + F^{}_1 
    \, \subseteq \, (\vG+F)+ F^{}_1\ts .  
\]
The last claim follows from Theorem~\ref{thm:conj1}.  
\end{proof}

In general, a measure need not (and generally will not) be Fourier
transformable in order to possess an autocorrelation and a
diffraction. {We can extend these results to the class of
  weakly almost periodic measures; see \cite[Sec.~4.10]{TAO2} for
  definitions and \cite{MoSt,LS2} for details. This class contains all
  translation-bounded, Fourier-transformable measures \cite{MoSt}.}
Next, we require the concept of Fourier--Bohr coefficients; compare
\cite[Def.~2.18]{LS2}.

\begin{definition}\label{def:FB-coeffs}
  Let $\mu$ be a weakly almost periodic measure on a group $G$.  The
  \emph{Fourier--Bohr coefficients} of $\mu$ are defined, for each
  $\chi\in\widehat{G}$, by
\[
   c_{\chi}(\mu) \, \defeq \,
    \MM \bigl( \overline{\chi} \ts \mu \bigr)
   \, = \lim_{n\to\infty}\frac{\bigl( \overline{\chi}\ts\mu 
       \bigr) (A_n)}{\vol (A_n)} \ts ,
\]
where $\cA = \{A_n\}$ is any van Hove sequence in $G$. 
\end{definition}

Note that the existence of the Fourier--Bohr coefficients, in the
above form, follows from \mbox{\cite[Lemma~4.10.7]{MoSt}}, and their
values do not depend on the van Hove sequence chosen; see also
\cite{ARMA}.

\begin{coro}\label{coro:wap}
  Let\/ $\mu$ be a weakly almost periodic measure on\/ $G$ such that\/
  $\supp(\mu)$ is contained in a Meyer set.  If\/
  $\mu_{\mathsf{s}} \neq 0$ and the set\/
  $S\defeq\{ \chi\in\widehat{G} : c_{\chi}(\mu)\neq 0 \}$ is\/
  $\cB$-sparse in\/ $\widehat{G}$ for some van Hove sequence\/ $\cB$
  in\/ $\widehat{G}$, there exists a lattice\/ $\vG$ in $G$ together
  with a finite set\/ $F\subseteq G$ such that
\[ 
    \supp(\mu_{\mathsf{s}}) \ts , \: 
    \supp(\mu^{}_0)  \, \subseteq \, \vG+F \ts ,
\]
together with\/ $S \subseteq \vG^{\ts 0} + F'$ for some
finite set\/ $F' \subseteq \widehat{G}$.
\end{coro}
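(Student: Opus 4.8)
The plan is to reduce the whole statement to the strongly almost periodic part $\mu_{\mathsf{s}}$ and then to apply Theorem~\ref{thm:conj1}. The first step is to observe that the Fourier--Bohr coefficients only detect $\mu_{\mathsf{s}}$. Multiplication by a character $\overline{\chi}$ preserves weak almost periodicity and commutes with the Eberlein decomposition, so the strongly almost periodic part of $\overline{\chi}\ts\mu$ is $\overline{\chi}\ts\mu_{\mathsf{s}}$ while its null-weakly almost periodic part is $\overline{\chi}\ts\mu^{}_0$. Since the mean of any null-weakly almost periodic measure vanishes, I would conclude
\[
   c_{\chi} (\mu) \, = \, \MM \bigl(\overline{\chi}\ts\mu\bigr)
   \, = \, \MM \bigl(\overline{\chi}\ts\mu_{\mathsf{s}}\bigr)
   \, = \, c_{\chi} (\mu_{\mathsf{s}})
\]
for every $\chi\in\widehat{G}$, so that $S = \{ \chi : c_{\chi}(\mu_{\mathsf{s}}) \neq 0 \}$.

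Next, I would identify $S$ with the Fourier--Bohr support of $\mu_{\mathsf{s}}$. Being strongly almost periodic, $\mu_{\mathsf{s}}$ is translation bounded (compare Remark~\ref{rem:sparse}) and Fourier transformable, with $\widehat{\mu_{\mathsf{s}}}$ a pure point measure whose atoms are precisely the Fourier--Bohr coefficients, $\widehat{\mu_{\mathsf{s}}}(\{ \chi \}) = c_{\chi}(\mu_{\mathsf{s}})$; see \cite{LS2,MoSt}. Hence $\supp(\widehat{\mu_{\mathsf{s}}}) = S$, which is $\cB$-sparse by assumption. Moreover, since $\supp(\mu)$ lies in a Meyer set, so does $\supp(\mu_{\mathsf{s}})$, by \cite{NS-weight,Nicu2}. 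Together with $\mu_{\mathsf{s}}\neq 0$, this shows that $\mu_{\mathsf{s}}$ meets all the hypotheses of Theorem~\ref{thm:conj1}. Applying that theorem then yields a lattice $\vG\subseteq G$ and finite sets $F\subseteq G$, $F'\subseteq\widehat{G}$ with $\supp(\mu_{\mathsf{s}})\subseteq\vG+F$ and $S = \supp(\widehat{\mu_{\mathsf{s}}})\subseteq\vG^{\ts 0}+F'$.

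It remains to control $\supp(\mu^{}_0)$, which I would handle exactly as in the proof of Corollary~\ref{coro:mixed-FT}. Embedding $\supp(\mu)$ in a model set $\vL$ with compact window gives $\supp(\mu^{}_0), \ts \supp(\mu_{\mathsf{s}})\subseteq\vL$; the relative density of $\supp(\mu_{\mathsf{s}})$, valid because $\mu_{\mathsf{s}}$ is strongly almost periodic and nonzero, combined with \cite[Lemma~5.5.1]{NS11}, then produces a finite set $F^{}_1$ with $\vL\subseteq\supp(\mu_{\mathsf{s}})+F^{}_1$, whence $\supp(\mu^{}_0)\subseteq(\vG+F)+F^{}_1$. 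Replacing $F$ by $F+F^{}_1$ absorbs the new translates and gives both support inclusions with a single finite $F$. I expect the main obstacle to be the identification carried out in the second step, namely that the mean-value coefficients of Definition~\ref{def:FB-coeffs} coincide with the point masses of $\widehat{\mu_{\mathsf{s}}}$; once this correspondence is in place, everything else follows from the machinery already established in this section.
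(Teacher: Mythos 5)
Your reduction to $\mu_{\mathsf{s}}$ stalls at its pivotal step: the assertion that $\mu_{\mathsf{s}}$, being strongly almost periodic, is Fourier transformable as a measure, with $\widehat{\mu_{\mathsf{s}}}$ pure point and atoms $c_{\chi}(\mu_{\mathsf{s}})$. Strong almost periodicity does \emph{not} imply Fourier transformability; this is precisely the subject of \cite{Nicu}, where transformability of an $\ts\SAP$-measure is shown to hinge on its formal Fourier--Bohr series $\sum_{\chi} c_{\chi}(\mu_{\mathsf{s}})\, \delta_{\chi}$ being a translation-bounded measure. Under the hypotheses of Corollary~\ref{coro:wap}, the set $S$ is only assumed $\cB$-sparse, which does not imply weak uniform discreteness (compare Example~\ref{ex:wud-sparse} and Theorem~\ref{thm:equivalences}), so that criterion cannot be verified; nor is $\mu$ itself assumed transformable --- the whole point of this corollary, as stated just before Definition~\ref{def:FB-coeffs}, is to treat measures that are merely weakly almost periodic. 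Note also that you mislocate the difficulty at the end of your proposal: granted transformability, the identity $\widehat{\mu_{\mathsf{s}}}(\{\chi\}) = c_{\chi}(\mu_{\mathsf{s}})$ is unproblematic (it is \cite[Prop.~3.14]{CRS}, used in the proof of Theorem~\ref{thm:T2}); the real obstacle is the existence of $\widehat{\mu_{\mathsf{s}}}$ as a measure at all. Without it, $S$ cannot be identified with a Fourier--Bohr support and Theorem~\ref{thm:conj1} does not apply to $\mu_{\mathsf{s}}$.

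The paper circumvents exactly this obstacle by passing to the autocorrelation: by \cite[Thm.~7.6]{LS2}, the weakly almost periodic measure $\mu$ has a unique autocorrelation $\gamma^{}_{\mu}$, whose Fourier transform (the diffraction) is pure point with support equal to $S$; and since $\gamma^{}_{\mu}$ is positive definite, it is \emph{automatically} Fourier transformable \cite[Thm.~4.5]{BF}. Hence Corollary~\ref{coro:mixed-FT} applies legitimately to $\gamma^{}_{\mu}$, giving $\supp(\gamma^{}_{\mu}) \subseteq \vG + F$ and $S \subseteq \vG^{\ts 0} + F'$; the support statement for $\mu$ (hence for both $\mu_{\mathsf{s}}$ and $\mu^{}_0$ simultaneously) is then recovered from $\supp(\gamma^{}_{\mu})$ via \cite[Lemma~5.5.1]{NS11}, using $\vL - \vL \subseteq \supp(\gamma^{}_{\mu}) + F^{}_1$ and $\supp(\mu) \subseteq (\vL-\vL) + s$ for a fixed $s \in \vL$. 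Your first step (that $c_{\chi}(\mu) = c_{\chi}(\mu_{\mathsf{s}})$, since characters commute with the Eberlein decomposition and null-weakly almost periodic measures have vanishing mean) is correct, as is your treatment of $\supp(\mu^{}_0)$; but to close the gap you would have to replace the unjustified transformability claim by this detour through the positive definite measure $\gamma^{}_{\mu}$, at which point your argument essentially becomes the paper's.
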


\begin{proof}
  By \cite[Thm.~7.6]{LS2}, $\mu$ has the unique autocorrelation
  $\gamma^{}_{\mu}$ and pure point diffraction with support $S$. Since
  $\gamma^{}_{\mu}$ is translation bounded, transformable and
  supported in the Meyer set $\supp(\mu)-\supp(\mu)$, we may apply
  Corollary \ref{coro:mixed-FT} to $\gamma^{}_{\mu}$ to obtain
\[
   \supp (\gamma^{}_{\mu}) \, \subseteq \, \vG + F \ts .
\]

Next, let $\vL$ be any Meyer set in $G$ such that
$\supp (\mu) \subseteq \vL$. Then,
$\supp (\gamma^{}_{\mu}) \subseteq \vL - \vL$, and
\cite[Lemma~5.5.1]{NS11} guarantees the existence of a finite set
$F_1$ such that
\[
   \vL - \vL \, \subseteq \, \supp (\gamma^{}_{\mu} ) + F^{}_1 \ts .
\]
Fix some $s\in \vL$. Then, we get
\[
  \supp (\mu) \, \subseteq \, \vL \, \subseteq \,
  (\vL - \vL) + s \, \subseteq \, \supp (\gamma^{}_{\mu})
  + F^{}_1 + s \, \subseteq \, \vG + F + F^{}_1 + s \ts .
\]
The claim now follows from Corollary~\ref{coro:mixed-FT}.
\end{proof}

\section{Sparseness of positive definite
  measures}\label{sec:sap}

This is the moment where we need to recall further concepts of almost
periodicity and study their consequences and relations in the context
of measures with sparse supports. Since we are interested in
{measures with pure point Fourier transform}, 
it is natural to begin with the
class of strongly almost periodic measures, or $\ts\SAP$-measures for
short, which were defined in Remark~\ref{rem:sparse} and have been
used several times already.

\subsection{Positive definite  measures with uniformly discrete
  support}

Here, we consider positive definite, pure point $\ts\SAP$-measures,
where we are able to tighten the type of almost periodicity when the
support is uniformly discrete.  In particular, we will show that any
such measure is also sup-almost periodic.

For any pure point measure $\omega$ on $G$, one can consider
\[
    \| \omega \|^{}_{\infty} \, \defeq \, \sup_{x\in G}\, 
    \lvert \omega \rvert \bigl( \{ x \} \bigr) ,
\]    
which defines a norm on $\cM^{\infty}_{\mathsf{pp}}$, the space of
translation-bounded pure point measures.  Now, recall from
\cite[Def.~5.3.4]{NS11} that a measure $\mu$ from this class is called
\emph{sup-almost periodic} if the set
$P_{\varepsilon} \defeq \{ t\in G: \| \mu - T^{}_{t} \ts \mu
\|^{}_{\infty} < \varepsilon \}$ is relatively dense for every
$\varepsilon > 0$.

\begin{theorem}\label{thm:ap}
  Let\/ $0\ne \mu =\sum_{x \in \vL} a (x) \, \delta_x$ be a positive
  definite, strongly almost periodic
  {$($hence also translation-bounded$\, )$}
  measure on\/ $G$. If $\vL$ is
  uniformly discrete, the following statements hold.
\begin{enumerate}\itemsep=2pt 
\item The set\/
  $ B_\varepsilon \defeq \{ x \in \vL : \mathrm{Re} \bigl( a (x)
  \bigr) \geqslant a (0) - \varepsilon \}$ is relatively dense for
  every\/ $\varepsilon >0$.
\item The measure\/ $\mu$ is sup-almost periodic.
\item The measure\/ $\mu$ is norm-almost periodic.  
\item There is a CPS\/ $(G,H,\cL)$ and some\/ $h \in C^{}_{0} (H)$
  such that\/ $\mu = \omega^{}_{h}$.
\end{enumerate}   
\end{theorem}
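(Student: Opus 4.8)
The plan is to route everything through a single continuous, bounded, positive definite function attached to $\mu$, and then to exploit the elementary inequality for positive definite functions. First I would record the standard consequences of positive definiteness for $\mu=\sum_{x\in\vL}a(x)\,\delta^{}_x$: one has $a(0)>0$, $a(-x)=\overline{a(x)}$ and $\lvert a(x)\rvert\leqslant a(0)$ for all $x$, and $\mu$ is Fourier transformable with $\widehat{\mu}\geqslant 0$; since $\mu$ is in addition strongly almost periodic, $\widehat{\mu}$ is a positive pure point measure. Next, I would pick a real, non-negative $g\in\Cc(G)$ supported in a symmetric neighbourhood $U$ of $0$, set $\psi\defeq g*\widetilde{g}$ (so that $\widehat{\psi}=\lvert\widehat{g}\rvert^2\geqslant 0$) and $F\defeq\mu*\psi$. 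Then $F$ is a bounded, continuous, positive definite function, and it is Bohr almost periodic because $\psi\in\Cc(G)$ and $\mu$ is strongly almost periodic. Choosing $U$ small enough that $U-U$ meets $\vL-\vL$ only in $0$, uniform discreteness makes $\psi$ peak cleanly: after normalising $\psi(0)=1$, one gets $F(x)=a(x)$ for every $x\in\vL$, and in particular $F(0)=a(0)$.

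For part (1), I would use that the sets $\{t:\|F-T^{}_tF\|^{}_\infty\leqslant\delta\}$ of almost periods of the Bohr almost periodic function $F$ are relatively dense. Evaluating the almost-period inequality at $s=t$ gives $\lvert F(t)-a(0)\rvert\leqslant\delta$; since $a(0)>0$ and $F$ is supported in $\vL+\supp\psi$, the point $t$ must lie within $\supp\psi$ of some $x^{}_t\in\vL$, say $t=x^{}_t+u$, whence $F(t)=a(x^{}_t)\,\psi(u)$. Combining $\lvert a(x^{}_t)\psi(u)-a(0)\rvert\leqslant\delta$ with the extremal bounds $\lvert a(x^{}_t)\rvert\leqslant a(0)$ and $\psi(u)\leqslant\psi(0)=1$ forces both $\psi(u)$ and $\mathrm{Re}\,a(x^{}_t)$ close to their maxima; a short computation then yields $\mathrm{Re}\,a(x^{}_t)\geqslant a(0)-\delta$, so $x^{}_t\in B^{}_\delta$. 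As each almost period lies within the fixed compact set $\supp\psi$ of such an $x^{}_t$, relative density of the almost periods passes to $B^{}_\delta$, and choosing $\delta\leqslant\varepsilon$ proves (1).

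Parts (2) and (3) I would treat together, noting first that for a pure point measure with uniformly discrete support the norms $\|\cdot\|^{}_K$ (for $K$ a small compact neighbourhood of $0$) and $\|\cdot\|^{}_\infty$ are comparable on differences $\mu-T^{}_t\mu$, because each window meets $\vL$ and $\vL+t$ in a bounded number of points; hence (2) and (3) are equivalent, and it suffices to produce a relatively dense set of good sup-almost periods. The tool is the positive definite inequality $\lvert F(s)-F(s-t)\rvert^2\leqslant 2F(0)\bigl(F(0)-\mathrm{Re}\,F(t)\bigr)$, valid for all $s$. For $t\in B^{}_{\varepsilon'}\subseteq\vL$ we have $F(t)=a(t)$ with $\mathrm{Re}\,a(t)\geqslant a(0)-\varepsilon'$, so the right-hand side is at most $2a(0)\varepsilon'$ and $\|F-T^{}_tF\|^{}_\infty=\|(\mu-T^{}_t\mu)*\psi\|^{}_\infty$ is small. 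It then remains to deconvolve: evaluating $(\mu-T^{}_t\mu)*\psi$ at the points of $\vL$ and of $\vL+t$ and using the peaking of $\psi$ recovers the individual coefficients of $\mu-T^{}_t\mu$.

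The hard part is exactly this deconvolution. For $z\in\vL$ with $z-t\in\vL$ the matched coefficient equals $F(z)-F(z-t)$ and is automatically small; the obstruction comes from unmatched points, where $z\in\vL$ but $z-t\notin\vL$ (and symmetrically), since there the coefficient of $\mu-T^{}_t\mu$ is the full value $a(z)$. Such a near-miss occurs precisely when $z-t\in\vL-\vL$ lies within $\supp\psi$ of $\vL$ without belonging to $\vL$, and a two-point computation shows that positive definiteness by itself does not force $a(z)$ to be small. The way I would close the gap is to show that $\supp(\mu)$ is in fact a Meyer set, i.e.\ that $\vL-\vL$ is uniformly discrete; this is the step where positive definiteness together with strong almost periodicity must be used decisively, and it is the point I expect to be most delicate. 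Once $\vL-\vL$ is uniformly discrete, choosing $\supp\psi$ smaller than its separation constant eliminates all near-misses, so every unmatched coefficient is small and $\|\mu-T^{}_t\mu\|^{}_\infty$ is small for $t\in B^{}_{\varepsilon'}$; relative density from (1) then gives sup-almost periodicity, hence (2) and (3). Finally, for (4), a norm-almost periodic measure with Meyer set support is a weighted model comb with a $\Cz$-weight by the representation theory of such measures, the norm-almost periodic refinement of \cite[Thm.~5.5.2]{NS11}, which yields a CPS $(G,H,\cL)$ and some $h\in\Cz(H)$ with $\mu=\omega^{}_h$, completing the proof.
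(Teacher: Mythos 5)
Your part (1) is correct and is essentially the paper's own argument (the paper convolves $\mu$ with a single bump function $g$ peaked at $0$ rather than with $\psi = g * \widetilde{g}\ts$, but the mechanism is identical: Bohr almost periods of the convolution lie within a fixed compact set of points of $B_\varepsilon$). The genuine gap is in parts (2)--(3). There you reduce sup-almost periodicity to the claim that $\vL - \vL$ is uniformly discrete, i.e.\ that $\supp(\mu)$ is Meyer, and you leave exactly this claim unproved. This is not a deferrable technicality: the claim does not follow from the hypotheses, and the theorem itself signals as much. Conclusion (4) produces only a weight $h \in \Cz(H)$, whereas by \cite[Thm.~5.5.2]{NS11} the weight can be chosen in $\Cc(H)$ precisely when the support is FLC/Meyer; moreover, the paper keeps Theorem~\ref{thm:ap-2} (which assumes FLC support) separate from Corollary~\ref{coro:posdef} (which assumes only uniform discreteness), and remarks just before Corollary~\ref{coro:posdef} that with merely uniformly discrete support the crystallographic structure of the support is lost. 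If Meyer support followed from positive definiteness, strong almost periodicity and uniform discreteness alone, these distinctions would collapse and (4) would have been stated with $h \in \Cc(H)$. So the near-miss obstruction you correctly identified is real, and your proposed route cannot close it.

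What closes (2) with no deconvolution at all is a fact you already used implicitly at the start: by \cite[Prop.~2.4]{LS1}, the coefficient function $a$ of a positive definite \emph{pure point} measure is itself a positive definite function on $G$ (this is what underwrites your opening claims $\lvert a(x)\rvert \leqslant a(0)$ and $a(-x) = \overline{a(x)}\ts$). Krein's inequality \cite[Cor.~2.5]{LS1} therefore applies to $a$ directly, giving $\lvert a(x+t) - a(x) \rvert^2 \leqslant 2\ts a(0)\ts[\ts a(0) - \mathrm{Re}\bigl(a(t)\bigr)]$ for all $x, t \in G$, and hence $\| \mu - T^{}_{t}\ts \mu \|^{2}_{\infty} \leqslant 2\ts a(0)\ts[\ts a(0) - \mathrm{Re}\bigl(a(t)\bigr)]$ with no convolution performed and no unmatched points to worry about. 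Thus $B_{\varepsilon^2/(2\ts a(0))} \subseteq P_\varepsilon$, and part (1) yields (2) immediately. Part (3) is then \cite[Lemma~5.3.6]{NS11}, as you note, and (4) is \cite[Thm.~5.4.2]{NS11}, which requires only norm-almost periodicity and uniformly discrete support --- not Meyer support --- which is exactly why the weight is only guaranteed to lie in $\Cz(H)$ rather than $\Cc(H)$; your derivation of (4) via Meyer support is therefore both unnecessary and inconsistent with the form of the conclusion.
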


\begin{proof}
  {First, let us note that, since $\mu$ is a positive
    definite, {pure point} measure, the function $a$ on $G$
    given by $a (x)= \mu (\{x \})$ is positive definite and supported
    inside $\vL$ by \cite[Prop.~2.4]{LS1}.  Therefore, we have
    $a (0) \in \RR$, and $\left| a (x) \right| \leqslant a (0)$ holds
    for all $ x \in \vL$.  Consequently,\footnote{{Note
        that, since $\vL$ is uniformly discrete and the coefficients
        $a(x)$ are bounded by $a(0)$, we also obtain the
        translation-boundedness of $\mu$ directly from
        positive definiteness, without reference to strong almost
        periodicity.}} {we also have $0\in\vL$ (since
      $\mu\neq 0$) and}
    $\big\lvert \text{Re} \bigl(a (x) \bigr) \big\rvert \leqslant a
    (0)$ for all $x \in \vL$.}

  Next, as $\vL$ is uniformly discrete, we can find a relatively
  compact open neighbourhood $U$ of $0$ such that
  $(x+U) \cap (y+U) \neq \varnothing$ for any $x,y \in \vL$ implies
  $x=y$.

  Let {$g\in C_{\mathsf{c}} (G)$} be a function with values
  in $[0,1]$ such that $g(0) =1$, $g(-x)=g(x)$ and
  $\supp (g) \subseteq U$.  The previous property implies that any
  translate of $\supp (g)$ meets $\vL$ in at most one point.
  Therefore, if $\bigl( \delta^{}_{\! \vL} *g \bigr) (z) \neq 0$ for
  some $z \in G$, there exists a point $y \in \vL$ so that
\begin{equation}\label{eq:ap-1}
    \bigl( \mu *g \bigr) (z) \, = \, g (z-y) \, a (y) \ts .
\end{equation}
As $\mu$ is strongly almost periodic, $\mu*g$ is uniformly (or Bohr)
almost periodic, and the set
\[
   V_{\varepsilon} \, \defeq \, \big\{ t \in G :  \| T^{}_t \ts \mu *g
       - \mu*g \|^{}_\infty < \varepsilon \big\}
\]
is relatively dense for each $\varepsilon > 0$. We claim that
\[
   V_{\varepsilon} \, \subseteq \, B_\varepsilon+U  ,
\]
{with $B_{\varepsilon}$ as defined in statement (1)
  of the theorem.}
As $U$ is a relatively compact subset of $G$, once we prove this
claim, the relative denseness of $V_{\varepsilon}$ implies the
relative denseness of $B_\varepsilon$.

To proceed, let $t \in V_{\varepsilon}$. Then, as
$\| T^{}_t \ts \mu *g - \mu*g \|^{}_\infty < \varepsilon$, we
certainly have the inequality
$ \left| \bigl( T^{}_t \ts \mu *g \bigr) (t) - \bigl( \mu*g \bigr) (t)
\right| < \varepsilon $. By Eq.~\eqref{eq:conv-2}, this is equivalent
to
\[
   \left| \bigl( \mu *g \bigr) (0) - \bigl( \mu*g \bigr) (t)
    \right| \, < \,  \varepsilon \ts .
\]
Since $B_{\varepsilon}\subseteq B_{\varepsilon'}$ for
$\varepsilon\leqslant\varepsilon'$, it suffices to show relative
denseness of the sets $B_{\varepsilon}$ for all sufficiently small
$\varepsilon>0$. Thus, we now assume
\begin{equation}\label{eq:ap-2}
    \bigl( \mu*g \bigr) (0) \, = \, a (0)
     \, > \, \varepsilon \, > \, 0
\end{equation}
and hence $\bigl( \mu*g \bigr) (t) \neq 0$. Therefore, by
Eq.~\eqref{eq:ap-1}, there exists a unique $y \in \vL$ so that
\begin{equation}\label{eq:ap-3}
   \bigl( \mu *g \bigr) (t) \, = \, g(t-y) \,  a (y) \ts .
\end{equation}
With the relations from Eqs.~\eqref{eq:ap-2} and \eqref{eq:ap-3}, we
obtain the following chain of implications,
\[
\begin{split}
  & \left| \bigl( \mu *g \bigr) (0) - \bigl( \mu*g \bigr) (t) \right|
  \,  < \, \varepsilon  \quad \Rightarrow  \quad
 \left| \text{Re} \bigl( a (0) -
   g(t-y) \ts a(y) \bigr) \right| \, < \, \varepsilon \\[1mm]
    \Rightarrow \quad
  & \left| a (0) -  \text{Re} 
  \bigl(g(t-y) \ts a (y) \bigr) \right| \, < \, \varepsilon 
  \quad \Rightarrow \quad  \text{Re} \bigl(g(t-y) \ts a (y) \bigr)
  \, \geqslant \,   a (0) - \varepsilon \, > \, 0 \\[1mm]
  \Rightarrow  \quad & \, \text{Re} \bigl( a (y) \bigr) 
  \, \geqslant \, g(t-y) \, \text{Re} \bigl( a (y)  \bigr) 
  \, \geqslant  \, a (0) -  \varepsilon  \ts ,
\end{split}
\]
where the last step follows from $0 \leqslant g(t-y) \leqslant 1$ in
conjunction with the observation that
$g(t-y) \ts a (y)= \bigl( \mu *g \bigr) (t) \neq 0$.  This implies
$y \in B_\varepsilon$ together with $t-y \in \supp (g) \subseteq
U$. Thus
\[
   t \, = \, y+(t-y) \in  B_\varepsilon +U  \ts ,
\]
and we are done with the first claim.

Next, consider claim (2). As $a$ is positive definite, Krein's
inequality \cite[Cor.~2.5]{LS1} implies
\[
    \left| a (x+t) - a (x) \right|^2  \, \leqslant \,
    2 \ts a (0) [ a (0)- \text{Re} ( a (t))] \ts .
\]
Therefore, one has
\[
    \big\| \mu- T^{}_t \ts \mu \big\|^{2}_\infty \, \leqslant \,
     2 \ts a (0) [ a (0)- \text{Re}( a (t))] \ts .
\]
This shows that
\[
   B_{\!\frac{\varepsilon^2}{2 \ts a (0)}} \, 
   \subseteq \, P_\varepsilon \ts ,
\]
where the $P_\varepsilon$ are the sets of $\varepsilon$-almost periods
which define sup-almost periodicity.

By \cite[Lemma~5.3.6]{NS11}, sup-almost periodicity implies
norm-almost periodicity in this case, as $\vL$ is uniformly discrete.
Finally, claim (4) is \cite[Thm.~5.4.2]{NS11}; a stronger version will
be given in Theorem~\ref{thm:T2} below.
\end{proof}

{Recall that positive definite measures are Fourier
  transformable \cite[Thm.~4.5]{BF}, and that strong almost
  periodicity then implies the Fourier transform to be a pure point
  measure; see \cite[Cor.~4.10.13]{ST}.} Thus, under the conditions of
Theorem~\ref{thm:ap}, $\mu$ and $\widehat{\mu}$ are both pure point.
However, for $\mu$ to be doubly sparse, we need to add a condition on
the support of $\widehat{\mu}$.

Indeed, the autocorrelation measure of the Fibonacci chain, see
\cite{TAO1} for details, provides an example of a positive definite
$\ts\SAP$-measure, $\mu\in\cM^{\infty} (\RR)$, with Meyer set support
such that $\widehat{\mu}$ is a positive, pure point measure on $\RR$
with \emph{dense} support, and the same situation applies to the
autocorrelation measures of {aperiodic}
regular model sets in general; see also
\cite{Ric03} for some interesting extensions {beyond bounded
windows.}

At this point, it seems worthwhile to state the following improvement
of Theorem~\ref{thm:ap} for the case that the support of $\mu$ is FLC,
{ where we do not need second countability of $G$. We
refer to \cite{KL,Fav} for Dirac combs with Delone set support.}

\begin{theorem}\label{thm:ap-2}
  Let\/ $\mu$ be a positive definite, pure point measure on
  {the metrisable LCAG\/ $G$, and assume that\/
  $\mu$ has} FLC support and sparse FBS. Then, one has
\[
  \mu \, = \sum_{i=1}^{N} \sum_{x\in \vG + \tau^{}_{i}}
  P^{}_{i} (x) \ts \delta_{x}
  \quad\text{and}\quad
  \widehat{\mu} \, = \sum_{j=1}^{M}
  \sum_{y\in \vG^{\ts 0} + \sigma^{}_{j}}
  Q^{}_{j} (y) \ts \delta_{y}
\]
for some lattice\/ $\vG\subseteq G$ and some trigonometric
polynomials\/ $P_{i}$ on\/ $G$ and\/ $Q_{j}$ on\/ $\widehat{G}$.
\end{theorem}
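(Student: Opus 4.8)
The plan is to recognise the hypotheses here as a special case of those in Theorem~\ref{thm:trig-coeffs} and then to remove the second-countability assumption by descending to a suitable closed subgroup. First, I would treat $\mu = 0$ trivially (empty sums) and assume $\mu \neq 0$. Setting $a(x) = \mu(\{ x\})$ and $\vL = \supp(\mu)$, positive definiteness gives, via \cite[Prop.~2.4]{LS1}, that $a$ is positive definite and supported in $\vL$; hence $a(0) \in \RR$, $\lvert a(x)\rvert \leqslant a(0)$ for all $x$, and in particular $0 \in \vL$ with $a(0) > 0$. Positive definite measures are Fourier transformable by \cite[Thm.~4.5]{BF}, and since FLC support is in particular uniformly discrete, the bound $\lvert a(x)\rvert \leqslant a(0)$ yields translation boundedness of $\mu$ exactly as in the footnote to Theorem~\ref{thm:ap}. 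Sparse FBS means $\widehat{\mu}$ is pure point, so $\mu$ is strongly almost periodic by \cite[Cor.~4.10.13]{MoSt}, and then $\vL$ is relatively dense by Remark~\ref{rem:sparse}. Together with the FLC property, which makes $\vL - \vL$ uniformly discrete, this shows that $\vL$ is a Meyer set. Thus $\mu$ meets all hypotheses of Theorem~\ref{thm:conj1} save, possibly, for the second countability of $G$.

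To cover the merely metrisable case, I would pass to $G_1 \defeq \overline{\langle \vL\rangle}$. As $\vL$ is countable, $\langle\vL\rangle$ is a countable dense subgroup of $G_1$, so $G_1$ is separable and, being metrisable, second countable. Because $0 \in \vL$, we have $\vL \subseteq G_1$, so $\mu$ may be viewed without any translation as a measure $\mu_1$ on $G_1$; it is again positive definite, translation bounded, and supported on the Meyer set $\vL$, now relatively dense in the co-compact subgroup $G_1$. The one substantial point is to transfer the sparse FBS to $\widehat{G_1} \cong \widehat{G} / G_1^{\perp}$. Since $\mu$ is supported in $G_1$, its transform $\widehat{\mu}$ is $G_1^{\perp}$-periodic and descends, along the quotient map $q \colon \widehat{G} \to \widehat{G_1}$ with discrete kernel $G_1^{\perp}$ (discrete because $G_1$ is co-compact), to $\widehat{\mu_1}$, with $\supp(\widehat{\mu_1}) = q\bigl(\supp(\widehat{\mu})\bigr)$. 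I would then show that the $q$-image of the $G_1^{\perp}$-periodic, $\cB$-sparse set $\supp(\widehat{\mu})$ is sparse in $\widehat{G_1}$: from a van Hove sequence $\{ C_n\}$ in $\widehat{G_1}$ one builds a van Hove sequence in $\widehat{G}$ fibred over it and over an exhausting sequence of $G_1^{\perp}$, so that periodicity turns the point counts and the Haar volumes into products and the two upper densities differ only by the finite, positive covolume factor coming from Weil's formula.

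With sparse FBS established in $\widehat{G_1}$, Theorem~\ref{thm:trig-coeffs} applies to $\mu_1$ on the second-countable group $G_1$ and delivers a lattice $\vG \subseteq G_1$, finite sets $F = \{\tau^{}_1, \ldots, \tau^{}_N\} \subseteq G_1$ and $F' \subseteq \widehat{G_1}$, and trigonometric polynomials on $G_1$ and $\widehat{G_1}$ realising the two representations. I would then lift back to $G$. Since $G_1$ is co-compact in $G$ and $\vG$ is discrete and co-compact in $G_1$, the set $\vG$ is a lattice in $G$. The restriction map $\widehat{G}\to\widehat{G_1}$ is onto, so each character appearing in a $P^{}_i$ extends to a character of $G$; as $\vG + \tau^{}_i \subseteq G_1$, the values are unchanged and the $P^{}_i$ become trigonometric polynomials on $G$. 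Dually, $q^{-1}$ of the annihilator of $\vG$ in $\widehat{G_1}$ is exactly $\vG^{\ts 0}$, the annihilator of $\vG$ in $\widehat{G}$ (again a lattice); the characters of $\widehat{G_1}$ pull back under $q$ to characters of $\widehat{G}$ coming from $G_1 \subseteq G$, and a choice of one $q$-preimage for each element of $F'$ yields the required finite set in $\widehat{G}$, so that $\widehat{\mu} = q^{*}\widehat{\mu_1}$ takes the asserted form.

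I expect the genuine obstacle to be the descent of the sparse-FBS hypothesis in the second paragraph: one must control the upper density of the $G_1^{\perp}$-periodic set $\supp(\widehat{\mu})$ under the quotient by the discrete, but in general non-co-compact, group $G_1^{\perp}$, matching van Hove sequences and Haar normalisations across the extension $G_1^{\perp}\hookrightarrow \widehat{G}\twoheadrightarrow\widehat{G_1}$. Everything else is bookkeeping with annihilators, the extension and pullback of characters, and the functoriality of the Fourier transform under the inclusion of the closed subgroup $G_1$.
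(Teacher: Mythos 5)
Your argument breaks down at the one step that carries the real content of the theorem: the claim that ``the FLC property \ldots\ makes $\vL - \vL$ uniformly discrete'', so that relative denseness of $\vL=\supp(\mu)$ plus FLC would already make $\vL$ a Meyer set. As the paper's own proof recalls, FLC means only that $\vL - \vL$ is \emph{locally finite}, which is strictly weaker than uniform discreteness of $\vL-\vL$; compare Remark~\ref{rem:harvest}, where the Meyer property is exactly the stronger condition. A relatively dense FLC set need not be Meyer: for instance, the Delone sets fixed by primitive substitutions in $\RR$ with a non-Pisot, non-Salem inflation factor are FLC but fail the Meyer condition. So after you have obtained translation boundedness, transformability and strong almost periodicity (all of which match the paper), you are not yet entitled to invoke Theorem~\ref{thm:conj1} or Theorem~\ref{thm:trig-coeffs}. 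Closing this gap is precisely what the paper's two extra inputs do: positive definiteness upgrades strong almost periodicity to \emph{norm}-almost periodicity via Krein's inequality (Theorem~\ref{thm:ap}), and then the implication (i) $\Rightarrow$ (vi) of \cite[Thm.~5.5.2]{NS11} represents a norm-almost periodic measure with FLC support as a weighted model comb $\mu=\omega^{}_{h}$ with $h\in \Cc(H)$ in some CPS, so that $\supp(\mu)\subseteq\oplam\bigl(\supp(h)\bigr)$ is contained in a model set and hence is Meyer. Only then does Theorem~\ref{thm:trig-coeffs} apply.

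The second-countability descent in your middle paragraphs is also not viable, and is in any case unnecessary. If $G$ is metrisable and $\sigma$-compact, it is automatically second countable, so no reduction is needed there; the added generality of ``metrisable'' concerns exactly the non-$\sigma$-compact case. But in that case your construction collapses at its first step: since $\mu\neq 0$ is strongly almost periodic, $\vL$ is relatively dense, so $G=\vL+K$ for some compact $K$, and countability of $\vL$ would force $G$ to be $\sigma$-compact. Hence $\vL$ is uncountable, and $G_1=\overline{\langle \vL\rangle}$ is co-compact but not $\sigma$-compact, so it is not second countable either --- the descent gains nothing precisely when it would be needed. The paper needs no such reduction because all density hypotheses live on the dual side: sparse FBS only requires van Hove sequences in $\widehat{G}$, and $\widehat{G}$ is $\sigma$-compact exactly when $G$ is metrisable, while the hypothesis on $\supp(\mu)$ is FLC, a condition involving no van Hove sequence in $G$ at all; the chain Theorem~\ref{thm:ap} $\rightarrow$ \cite[Thm.~5.5.2]{NS11} $\rightarrow$ Theorem~\ref{thm:trig-coeffs} then runs under metrisability alone.
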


\begin{proof}
  {Recall first that $\supp(\mu)$ being FLC means that
  $\supp(\mu)-\supp(\mu)$ is locally finite.  Then, since $\mu$ is
  positive definite, we have $|\mu(\{x\})|\leq \mu(\{0\})$ for all
  $x\in G$ as in the proof of Theorem~\ref{thm:ap}, and $\mu$ is thus
  translation bounded.  The assumption that $\mu$ has a sparse FBS
  implies that $\mu$ is transformable as a measure and that
  $\widehat{\mu}$ is pure point, hence $\mu$ is strongly almost
  periodic.}

  Now, by Theorem~\ref{thm:ap}, $\mu$ is norm-almost periodic.
  Invoking the implication (i) $\Rightarrow$ (vi) from
  \cite[Thm.~5.5.2]{NS11}, we see that there exists a CPS $(G,H,\cL)$
  and a function $h\in C_{\mathsf{c}} (H)$ such that
  $\mu = \omega^{}_{h}$. In particular,
  $\supp (\mu) \subseteq \oplam \bigl(\supp (h)\bigr)$, so
  $\supp (\mu)$ is a subset of a Meyer set.

  The claim now follows from Theorem~\ref{thm:trig-coeffs}.
\end{proof}

\subsection{Doubly sparse sup-almost periodic measures}

Our aim here is to characterise positive definite measures $\mu$ with
uniformly discrete support and sparse FBS.  The key to this
characterisation is the sup-almost periodicity of such a measure as
obtained above.

In fact, given a sup-almost periodic measure $\mu$, our results
require only weak uniform discreteness of its support.  In line with
Theorem~\ref{thm:ap}, note that, when the support of $\mu$ is weakly
uniformly discrete, $\mu$ is sup-almost periodic if and only if it is
norm-almost periodic \cite[Lemma~5.3.6]{NS11}.  This means that all
measures we consider in this section are actually norm-almost
periodic.

\begin{theorem}\label{thm:T2} 
  Let\/ $0\neq \mu =\sum_{x \in \vL} a (x) \, \delta_x$ be a
  translation-bounded, sup-almost periodic measure on\/ $G$. If\/
  $\mu$ has weakly uniformly discrete support, $\vL$, and sparse FBS,
  there is a CPS\/ $(G, H, \cL)$ and some\/ $h \in \Cz(H)$ such that
\begin{enumerate}\itemsep=2pt
  \item $\mu= \omega^{}_h$;
  \item $h \in L^1(H)$ with support of finite measure;
  \item $\widehat{\mu}=\dens(\cL) \, \omega^{}_{\widecheck{h}}$;
  \item $\widecheck{h} \in L^1 \bigl(\widehat{H}\bigr) \cap
    C^{}_{0} (\widehat{H})$, with support of finite measure;
  \item
     ${H}$ has an open and closed compact subgroup, $\KK$;
  \item $\widehat{H}$ has an open and closed compact subgroup.
\end{enumerate}
\end{theorem}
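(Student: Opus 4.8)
The plan is to derive everything from the representation theory of norm-almost periodic measures and then read off the structural consequences via the qualitative uncertainty principle. First I would upgrade the hypothesis: since $\mu$ is sup-almost periodic and $\vL$ is weakly uniformly discrete, $\mu$ is in fact norm-almost periodic by \cite[Lemma~5.3.6]{NS11}. Being norm-almost periodic and pure point, \cite[Thm.~5.4.2]{NS11} then furnishes a CPS $(G,H,\cL)$ together with some $h\in\Cz(H)$ such that $\mu=\omega^{}_h$, which is claim~(1); note $h\not\equiv 0$ because $\mu\neq 0$.

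Next I would establish the integrability and finite-measure claims on the two sides in tandem. The window of $\mu$ is $U\defeq\{z\in H:h(z)\neq 0\}$, so $\supp(\mu)=\oplam(U)$; since $\supp(\mu)=\vL$ is weakly uniformly discrete, the last statement of Corollary~\ref{coro:cor1} gives $\theta^{}_{\nts H}(U)<\infty$, that is, $h$ has finite-measure support, and Fact~\ref{fact:h-is-L1} then yields $h\in L^1(H)$, which is claim~(2). With $h\in L^1(H)$ and $\mu$ Fourier transformable (transformability being part of having a sparse FBS), \cite[Thm.~5.3]{CRS} applies exactly as in the proof of Proposition~\ref{P1} and gives $\widehat{\mu}=\dens(\cL)\,\omega^{}_{\widecheck{h}}$ with $\widecheck{h}\in L^1(\widehat{H})$, while $\widecheck{h}\in C^{}_0(\widehat{H})$ follows from \cite[Thm.~1.2.4]{Rudin} (or Riemann--Lebesgue); this is claim~(3) and the $L^1\cap C^{}_0$ part of~(4). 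Passing to the dual CPS $(\widehat{G},\widehat{H},\cL^0)$, the FBS of $\mu$ is the projection set $\oplam\bigl(\{w\in\widehat{H}:\widecheck{h}(w)\neq 0\}\bigr)$, which is $\cB$-sparse by hypothesis; applying Corollary~\ref{coro:cor1} now on the dual side shows that $\widecheck{h}$ likewise has finite-measure support, completing claim~(4).

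For the remaining structural statements~(5) and~(6), I would argue as in Remark~\ref{rem QUP}. We now have a nonzero $h\in L^1(H)$ whose transform $\widecheck{h}\in L^1(\widehat{H})$, with both $h$ and $\widecheck{h}$ supported on sets of finite Haar measure. Thus the qualitative uncertainty principle fails in $H$, and \cite[Thm.~1]{HOG} forces the identity component $H_0$ of $H$ to be compact. For an LCAG, a compact identity component implies the existence of a compact open subgroup: the quotient $H/H_0$ is totally disconnected and hence, by van Dantzig's theorem, carries a compact open subgroup, whose preimage in $H$ is a compact open subgroup $\KK$ (being an extension of the compact group $H_0$ by a compact group). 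This gives claim~(5). Claim~(6) then follows by Pontryagin duality, since the annihilator $\KK^{\perp}\subseteq\widehat{H}$ of a compact open subgroup $\KK\subseteq H$ is again compact and open; alternatively, one may apply the same QUP argument directly to the pair $(\widecheck{h},h)$ on $\widehat{H}$.

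The main obstacle I anticipate is the bookkeeping at the interface between the two cut and project schemes, in particular checking that \cite[Thm.~5.3]{CRS} still applies under the weaker hypothesis $h\in\Cz(H)\cap L^1(H)$ (rather than $h\in\Cc(H)$ as in Proposition~\ref{P1}), and that the transformability of $\mu$ is genuinely available to feed into it. The structural step~(5) is conceptually the crux, but once the finite-measure supports of both $h$ and $\widecheck{h}$ are in hand, it reduces to the clean dichotomy of \cite[Thm.~1]{HOG} together with the standard structure theory of LCAGs.
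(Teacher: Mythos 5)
Your handling of claims (1), (2), (5) and (6) follows the paper's proof: \cite[Thm.~5.4.2]{NS11} for the CPS representation, Corollary~\ref{coro:cor1} plus Fact~\ref{fact:h-is-L1} for (2), and the QUP argument of Remark~\ref{rem QUP} for (5) and (6); your van Dantzig route to a compact open subgroup, and the duality argument for (6), are harmless variants of the paper's appeal to structure theory. The genuine gap is claim (3). You propose to apply \cite[Thm.~5.3]{CRS} ``exactly as in the proof of Proposition~\ref{P1}'', but in that proposition the weight function lies in $\Cc(H)$ --- that theorem concerns weighted model combs with \emph{compactly supported} weights, which is available there precisely because $\supp(\mu)$ is Meyer. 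Here the support is only weakly uniformly discrete, $h$ is merely in $\Cz(H)\cap L^1(H)$, and no compactly supported representation exists, so the citation does not apply; indeed, the whole reason the paper's proof of this theorem is long is to work around exactly this. You flag the issue yourself as the ``main obstacle'', but flagging it does not close it. The paper's actual proof of (3) is an approximation argument: fix $\varepsilon>0$, set $\varepsilon^{}_1=\varepsilon/\bigl(d+\dens(\cL)\ts\theta_H(U)\bigr)$ with $d=\udens^{}_{\cA}(\vL)$, choose a compact $K_0$ with $|h|<\varepsilon^{}_1$ off $K_0$, and put $g=f\ts h\in\Cc(H)$ where $1_{K_0}\leqslant f\leqslant 1_V$. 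Then $\omega^{}_g$ is strongly almost periodic by \cite[Thm.~3.1]{LR1}, its Fourier--Bohr coefficients are $c_\chi(\omega^{}_g)=\dens(\cL)\,\widecheck{g}(\chi^\star)$ for $\chi\in\pi_{\widehat{G}}(\cL^0)$ and $0$ otherwise by \cite[Thm.~3.3]{LR1}, while $\widehat{\mu}(\{\chi\})=\MM(\overline{\chi}\ts\mu)$ by \cite[Prop.~3.14]{CRS} (this is where the transformability contained in the sparse-FBS hypothesis genuinely enters). The estimates $\bigl|\widehat{\mu}(\{\chi\})-c_\chi(\omega^{}_g)\bigr|< d\ts\varepsilon^{}_1$ and $\dens(\cL)\,\bigl|\widecheck{g}(\chi^\star)-\widecheck{h}(\chi^\star)\bigr|<\dens(\cL)\,\theta_H(U)\,\varepsilon^{}_1$ combine to give $\bigl|\widehat{\mu}(\{\chi\})-\dens(\cL)\,\widecheck{h}(\chi^\star)\bigr|<\varepsilon$, and letting $\varepsilon\to 0$ yields claim (3), including the vanishing of $\widehat{\mu}$ off $\pi_{\widehat{G}}(\cL^0)$.

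Note also that claim (4) sits downstream of this gap: identifying the FBS of $\mu$ with the projection set $\oplam\bigl(\{w\in\widehat{H} : \widecheck{h}(w)\neq 0\}\bigr)$ in the dual CPS --- which is what licenses your second application of Corollary~\ref{coro:cor1} --- presupposes the formula $\widehat{\mu}=\dens(\cL)\,\omega^{}_{\widecheck{h}}$ of claim (3). So until (3) is established by the approximation argument (or a substitute for it), both (3) and the finite-measure part of (4) remain unproven in your write-up.
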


\begin{proof} 
  Since $\mu$ is sup-almost periodic, \cite[Thm.~5.4.2]{NS11} implies
  the existence of a CPS\/ $(G, H, \cL)$, with the group $H$
  metrisable, and that of a function\/ $h \in \Cz(H)$ such that\/
  $\mu = \omega^{}_h$. Let\/ $U=\{y\in H : h(y)\neq 0\}$ as before.
  Since $\supp(\mu)$ is weakly uniformly discrete, $\theta_H(U)$ is
  finite, by Corollary~\ref{coro:cor1}.  Then, $h\in L^1(H)$ by Fact
  \ref{fact:h-is-L1}, and claims $(1)$ and $(2)$ are verified.
    
  Let $(\widehat{G}, \widehat{H}, \cL^0 )$ be the dual CPS.  To show
  claim $(3)$, we use a function of compact support to construct a
  measure whose Fourier--Bohr coefficients are `close' to those of
  $\mu$.  Fix a van Hove sequence $\cA$ in $G$, set
  $ d = \udens^{}_{\cA} (\vL) $, which is finite because $\vL$ is
  weakly uniformly discrete, and fix an arbitrary $\varepsilon>0$.
  Since $h\in\Cz(H)$, there exists a compact set $K^{}_0\subseteq H$
  such that
\[
      |h(y)| \, < \, 
      \myfrac{\varepsilon}{d+\dens(\cL) \ts\ts \theta_{\nts H}(U)}
      \, \eqdef \, \varepsilon^{}_1
\]
holds for every $y\not\in K^{}_0$.  We may choose a relatively compact
open set $V\supseteq K^{}_0$ and an $f\in\Cc(H)$ such that
$1_{K_0}\leqslant f \leqslant 1_V$. Then, setting $g\defeq f\ts h$, we
have
\[   
      | g(y)| \, \leqslant \, | h(y)| \quad \text{for all }  y\in H .
\]
Further, for $y\in K^{}_0$, we have $h(y)=g(y)$ and, for
$y\not\in K^{}_0$,
\[ 
       |h(y)-g(y)| \, = \, |h(y)(1-f(y))|
       \, \leqslant \, |h(y)| \, < \,
       \varepsilon^{}_1 \ts . 
\]
   Consequently, we get
\[
       \|h-g\|^{}_\infty \, < \, \varepsilon^{}_1 \ts . 
\]
Now, consider the measure $\omega_g$. In general, we may not assume
that $\omega_g$ is transformable but, since $g\in\Cc(H)$, $\omega_g$
is strongly almost periodic by \cite[Thm.~3.1]{LR1}.  Then, by
Definition \ref{def:FB-coeffs}, we may consider the Fourier--Bohr
coefficients of $\omega_g$,
\[
     c_{\chi}(\omega_g) \, 
      \, = \lim_{n\to\infty}\frac{\bigl( \overline{\chi}\ts 
      \omega_g \bigr) (A_n)}{\vol (A_n)} \ts ,
\]
for each $\chi\in\widehat{G}$. Now,
   \cite[Thm.~3.3]{LR1}  implies that
\[ 
  c_{\chi}(\omega_g) \, = \, \dens(\cL) \int_H \chi^{\star}(t) \ts
  g(t) \dd t \, = \, \dens(\cL) \, \widecheck{g} (\chi^{\star})
\]
for all $\chi\in \pi_{\widehat{G}}(\cL^0)$, and $c_{\chi}(\omega_g)=0$
otherwise.

Since $|g|\leqslant |h|$, we have $\supp(\omega_g)\subseteq\vL$, while
$\|h-g\|^{}_{\infty}<\varepsilon^{}_1$ implies
$|\omega_h -\omega_g|(t) <\varepsilon^{}_1$ for all $t\in
\vL$. Finally, since $\mu$ is Fourier transformable, we have
$\widehat{\mu}(\{\chi\})=\MM(\overline{\chi}\mu)$ for all
$\chi\in\widehat{G}$, by an application of \cite[Prop.~3.14]{CRS}.
For all $\chi\in \widehat{G}$, this gives us
\[ 
   \begin{split}
         \bigl| \widehat{\mu}(\{\chi\}) - c_{\chi}(\omega_g) \bigr| \,
       & = \lim_{n\to\infty} \frac{1}{\vol(A_n)} \left| 
       \int_{A_n} \chi(t) \dd(\mu - \omega_g) \right| \\[2mm]
        &\leqslant \, \limsup_{n\to\infty}  \frac{1}{\vol(A_n)} 
        \sum_{t\in \vL\cap A_n} \bigl| \chi(t)\bigl(
           \omega_h(\{t\})-\omega_g(\{t\})\bigr) \bigr| \\[1mm]
        &< \, \limsup_{n\to\infty} \frac{1}{\vol(A_n)} 
          \sum_{t\in \vL\cap A_n} \! \varepsilon^{}_1 
         \; = \; d \ts\ts \varepsilon^{}_1 \ts .
     \end{split}
 \]
 For $\chi\not\in \pi_{\widehat{G}}(\cL^0)$, since $c_{\chi}(w_g)=0$
 (and $d\ts \varepsilon^{}_1$ is a product of fixed constants with an
 arbitrary $\varepsilon>0$, so that
 $d \ts \varepsilon^{}_{1} = \cO (\varepsilon)$), we thus have
\begin{equation}\label{eq:notsupp}
    \widehat{\mu}(\{\chi\}) \, = \, 0 \ts ,
    \quad \text{for all }
     \chi\not\in \pi_{\widehat{G}}(\cL^0)\ts .
\end{equation}
Let $\chi\in \pi_{\widehat{G}}(\cL^0)$. Then, from above,
\[
    \left|\widehat{\mu}(\{\chi\})-\dens(\cL)\ts\ts
      \widecheck{g}(\chi^{\star})\right|
    \, < \, d\ts\ts \varepsilon^{}_1 \ts .
\]
Now, $|g|\leqslant |h|$ implies $0=h(y)=g(y)$ for all $y\not\in U$,
and thus
\[ 
   \begin{split}
     \left| \widecheck{g}(\chi^{\star}) -
       \widecheck{h}(\chi^{\star})\right| \,
      &=  \left| \int_H \chi^{\star}(y) (g(y)-h(y)) \dd y \right| 
      \, =  \left| \int_U \chi^{\star}(y) (g(y)-h(y))
           \dd y \right| \\[2mm]
      & \leqslant  \int_U | g(y)-h(y) | \dd y 
   			\: <  \: \theta^{}_H (U) \,\varepsilon^{}_1\ts .
   \end{split}
\]
Multiplying this with $\dens (\cL)$ and combining it with the previous
inequality, we obtain
\[ 
   \bigl| \widehat{\mu}(\{\chi\})-\dens(\cL)\,
     \widecheck{h}(\chi^{\star})\bigr| \,
    < \, \bigl(d+\dens(\cL)\, \theta_H(U) \bigr) 
    \,\varepsilon^{}_1  \: = \: \varepsilon \ts ,
\]
and since $\varepsilon >0 $ was arbitrary, we have
\begin{equation}\label{eq:supp}
     \widehat{\mu}(\{\chi\}) \, = \, \dens(\cL)
      \, \widecheck{h}(\chi^{\star})
      \quad \text{for all } \chi\in \pi_{\widehat{G}}(\cL^0) \ts .
\end{equation}
Combining \eqref{eq:notsupp} and \eqref{eq:supp} gives claim $(3)$,
and as $\mu$ has sparse FBS, claim $(4)$ is direct from
Corollary~\ref{coro:cor1} (and \cite[Thm.~1.2.4]{Rudin}).
   	
To see claim $(5)$, we reason as in Remark~\ref{rem QUP}.  Since both
$h$ and $\widecheck{h}$ have finite measure support, the QUP fails for
$H$. Then, by \cite[Thm.~1]{HOG}, the identity component of $H$ must
be compact.  Recall that $H$, as an LCAG, has an open and closed
subgroup of the form $\RR^d \!\times\! \KK$. Since the identity
component of $H$ is compact, we have $d=0$, and hence $H$ has an open
and closed subgroup $\KK$ which is compact.  The same argument applied
to the dual group $\widehat{H}$ verifies claim $(6)$, and we are done.
\end{proof}

\begin{remark}
  Note that we have $\mu = \omega^{}_{h}$ together with
  $\widehat{\mu} = \dens (\cL) \, \omega^{}_{\widecheck{h}}$ in
  Theorem~\ref{thm:T2}, from claims $(1)$ and $(3)$. Let us emphasise
  that this actually is a PSF for the lattice $\cL^{0}$. Indeed, with
\[
  K^{}_{2} (G) \, \defeq \, \mathrm{span}^{}_{\CC}
  \{ f*g : f,g \in C_{\mathsf{c}} (G) \} \ts ,
\]
the second relation means
$\langle \omega^{}_{h}, g \rangle = \dens (\cL) \langle
\omega^{}_{\widecheck{h}} , \widecheck{g}\ts \rangle$ for all
$g \in K^{}_{2} (G)$; see \cite[Sec.~4.9]{MoSt} for background.  By
definition, one has
\[
  \langle \omega^{}_{h}, g \rangle \, = \ts
  \sum_{x \in L} h (x^{\star}) \, \delta^{}_{x}
   (g) \, =  \! \sum_{(x,x^{\star})\in \cL} \!
  g (x) \, h (x^{\star})  \, = \, \delta^{}_{\cL}
  (g \otimes h) \ts ,
\]
while the other side contains
\[
  \big\langle \omega^{}_{\widecheck{h}} , \widecheck{g}\ts \big\rangle
  \, = \! \sum_{(\chi,\chi^{\star}) \in \cL^{0}} \!  \widecheck{h}
  (\chi^{\star}) \, \widecheck{g} (\chi) \, = \, \delta^{}_{\cL^{0}}
  \bigl(\widecheck{g} \otimes \widecheck{h} \bigr) .
\]
Consequently, $\widehat{\omega^{}_{h}} = \dens (\cL) \,
\omega^{}_{\widecheck{h}}$ simply means that, for all
$g\in K^{}_{2} (G)$, we have
\[
  \langle \delta^{}_{\cL}, g \otimes h \rangle \, = \, \dens (\cL) \,
  \big\langle \delta^{}_{\cL^{0}} , \widecheck{g\otimes h} \big\rangle ,
\]
which justifies the interpretation; see \cite{Ric03}
for related results. \exend  
\end{remark}

The last proof, in conjunction with \cite[Thm.~7.6]{LS2}, has a
direct consequence as follows.

\begin{coro}\label{coro:diffract}
  Under the conditions of Theorem~\textnormal{\ref{thm:T2}}, the
  measure\/ $\mu = \omega^{}_{h}$ has a unique autocorrelation
  measure, namely\/
  $\gamma = \omega^{}_{h} \circledast \widetilde{\omega^{}_{h}} =
  \dens (\cL)\ts\ts \omega_{h * \widetilde{h}} \ts $, and the
  corresponding diffraction measure is\/
  $\ts \widehat{\gamma} = \dens (\cL)^2 \ts \omega_{\lvert \check{h}
    \rvert^2}$.  \qed
\end{coro}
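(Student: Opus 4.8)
The plan is to obtain both identities as a transform pair, reading off all numerical input from the proof of Theorem~\ref{thm:T2} and supplying the existence/intensity statement from diffraction theory. First I would note that $\mu$ is positive definite and translation bounded, hence weakly almost periodic, so that \cite[Thm.~7.6]{LS2} applies directly: it guarantees that the Eberlein convolution $\gamma = \omega^{}_{h}\circledast\widetilde{\omega^{}_{h}}$ exists and is the unique autocorrelation of $\mu$, and that the associated diffraction $\widehat{\gamma}$ is a pure point measure whose intensities are the squared moduli of the Fourier--Bohr coefficients, that is,
\[
   \widehat{\gamma} \, = \sum_{\chi\in\widehat{G}}
   \lvert c^{}_{\chi}(\mu)\rvert^2 \, \delta^{}_{\chi} \ts .
\]

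Next, I would extract the coefficients $c^{}_{\chi}(\mu)$ without any new computation. In the proof of Theorem~\ref{thm:T2}, the identity $c^{}_{\chi}(\mu) = \widehat{\mu}(\{\chi\})$ (via \cite[Prop.~3.14]{CRS}) together with Eqs.~\eqref{eq:notsupp} and \eqref{eq:supp} gives $c^{}_{\chi}(\mu) = \dens(\cL)\,\widecheck{h}(\chi^{\star})$ for $\chi\in\pi^{}_{\widehat{G}}(\cL^0)$ and $c^{}_{\chi}(\mu)=0$ otherwise. Substituting these values into the intensity formula yields
\[
   \widehat{\gamma} \, = \, \dens(\cL)^2 \!\!
   \sum_{\chi\in\pi^{}_{\widehat{G}}(\cL^0)} \!\!
   \lvert \widecheck{h}(\chi^{\star})\rvert^2 \, \delta^{}_{\chi}
   \, = \, \dens(\cL)^2 \, \omega^{}_{\lvert\widecheck{h}\rvert^2} \ts ,
\]
which is the claimed diffraction formula, now realised as a weighted Dirac comb in the dual CPS $(\widehat{G},\widehat{H},\cL^0)$.

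For the autocorrelation itself, I would pass back through the Fourier transform. The convolution theorem gives $\widecheck{h*\widetilde{h}} = \lvert\widecheck{h}\rvert^2$, and since $\widecheck{h}\in L^1(\widehat{H})\cap C^{}_{0}(\widehat{H})$ (hence $\widecheck{h}\in L^2$) has finite-measure support by Theorem~\ref{thm:T2}(4), so does $\lvert\widecheck{h}\rvert^2$, and in particular $\lvert\widecheck{h}\rvert^2\in L^1(\widehat{H})$. Therefore $\omega^{}_{h*\widetilde{h}}$ is Fourier transformable with $\widehat{\omega^{}_{h*\widetilde{h}}} = \dens(\cL)\,\omega^{}_{\lvert\widecheck{h}\rvert^2}$ by \cite[Thm.~5.3]{CRS}. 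Thus $\dens(\cL)\,\omega^{}_{h*\widetilde{h}}$ and $\gamma$ are both transformable measures with identical transform $\dens(\cL)^2\,\omega^{}_{\lvert\widecheck{h}\rvert^2}$, and the injectivity of the Fourier transform on transformable measures forces $\gamma = \dens(\cL)\,\omega^{}_{h*\widetilde{h}}$, completing the argument.

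The main obstacle I anticipate lies precisely in this last inversion step, namely in justifying that $\omega^{}_{h*\widetilde{h}}$ is genuinely transformable in the present generality, where $h$ lies only in $C^{}_{0}(H)\cap L^1(H)$ rather than $C_{\mathsf{c}}(H)$; this is the reason why \cite[Prop.~5.1]{CRS}, invoked in Corollary~\ref{coro:L2} under the stronger Meyer/compact-support hypotheses, cannot be quoted verbatim, and it is exactly here that the finite-measure support of $\widecheck{h}$ from Theorem~\ref{thm:T2} does the essential work via \cite[Thm.~5.3]{CRS}. An alternative that sidesteps the inversion would be to prove $\gamma = \dens(\cL)\,\omega^{}_{h*\widetilde{h}}$ first, by a limiting argument approximating $h$ with the compactly supported $g = f\ts h$ already constructed in the proof of Theorem~\ref{thm:T2} and controlling the Eberlein convolution in the norm $\|\cdot\|^{}_{\infty}$, and then to transform; the difficulty then migrates to the uniform control of that approximation.
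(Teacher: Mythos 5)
Your treatment of the diffraction formula is exactly the paper's argument: the corollary is presented there as a direct consequence of \cite[Thm.~7.6]{LS2} combined with ``the last proof'', i.e.\ with the Fourier--Bohr coefficient computation inside the proof of Theorem~\ref{thm:T2}, and your substitution of $c^{}_{\chi}(\mu)=\widehat{\mu}(\{\chi\})=\dens(\cL)\,\widecheck{h}(\chi^{\star})$ on $\pi^{}_{\widehat{G}}(\cL^{0})$ (and $c^{}_{\chi}(\mu)=0$ elsewhere) into the intensity formula is precisely that. One slip in the set-up: Theorem~\ref{thm:T2} does \emph{not} assume positive definiteness (that hypothesis only enters in Corollary~\ref{coro:posdef}, which feeds into Theorem~\ref{thm:T2}, not the other way around), so you cannot invoke it to place $\mu$ in $\WAP(G)$. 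The correct justification is that $\mu$ is translation bounded and Fourier transformable (transformability is part of the sparse-FBS hypothesis), and every such measure is weakly almost periodic; alternatively, $\mu$ is sup-almost periodic with weakly uniformly discrete support, hence norm-almost periodic, hence strongly and a fortiori weakly almost periodic. This is easily repaired.

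The genuine gap is in your derivation of the middle identity $\gamma=\dens(\cL)\,\omega^{}_{h*\widetilde{h}}$. You obtain transformability of $\omega^{}_{h*\widetilde{h}}$, with transform $\dens(\cL)\,\omega^{}_{\lvert\widecheck{h}\rvert^{2}}$, from \cite[Thm.~5.3]{CRS}; but that theorem is a statement about weight functions in $C_{\mathsf{c}}(H)$ --- this is exactly how it is used in the proof of Proposition~\ref{P1} --- whereas here $h*\widetilde{h}$ lies only in $\Cz(H)\cap L^{1}(H)$, with support contained in the closure of $U-U$ for a set $U$ of finite Haar measure, which is not compact in general. Finite-measure support of $\widecheck{h}$ does not bring the weight within the theorem's scope, and the claim that it ``does the essential work via \cite[Thm.~5.3]{CRS}'' is therefore unsupported. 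The clearest internal evidence is the proof of Theorem~\ref{thm:T2} itself: its entire approximation argument for claim $(3)$ exists precisely because the PSF of \cite{CRS} is unavailable for weights that are not compactly supported; if your citation were legitimate, that proof would collapse to one line. Note also that for such weights one must even argue that $\omega^{}_{h*\widetilde{h}}$ is a well-defined translation-bounded measure (local summability of $\sum_{x\in L}(h*\widetilde{h})(x^{\star})\,\delta^{}_{x}$ is automatic only for compactly supported weights), before any uniqueness-of-transform argument can be run. Your sketched fallback --- truncate to $g^{}_{n}=f^{}_{n}\ts h\in C_{\mathsf{c}}(H)$, apply \cite[Prop.~5.1]{CRS} to $\omega^{}_{g_{n}}$, and pass to the limit in the Eberlein convolution, with the kind of $\|\cdot\|^{}_{K}$-control that appears in Proposition~\ref{prop5} --- is indeed the viable route, but as you concede, that is where the real work sits, and it is not carried out; as written, the main argument rests on a theorem applied outside its hypotheses.
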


\begin{remark} 
  Since $\KK$ is open and closed in $H$, the factor group $H/\KK$ is
  discrete. Therefore,
\[
    \KK^0 \, \defeq \, \{ \chi \in \widehat{H} : 
    \chi \equiv 1 \mbox{ on } \KK \} 
    \, \simeq \, \widehat{H/\KK} 
\]
is closed and compact.  \exend
\end{remark}

\begin{prop}\label{prop5}
  Under the conditions of Theorem~\textnormal{\ref{thm:T2}}, the
  measure\/ $\mu$ may be approximated in any\/ $K$-norm\/
  {$\| . \|^{}_{K}$ for measures} by strongly almost
  periodic measures\/ $\mu_n$ that are supported inside sets\/ $\vG +F_n$,
  where\/ $\vG$ is a lattice in\/ $G$ and the\/ $F_n \subseteq G$ are
  finite.  Moreover, the Fourier--Bohr coefficients of the measures\/
  $\mu_n$ converge to those of\/ $\mu$.
\end{prop}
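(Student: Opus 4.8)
The plan is to produce the approximants as weighted model combs, $\mu^{}_n = \omega^{}_{h^{}_n}$, where each $h^{}_n$ is the restriction of $h$ to finitely many cosets of the compact open subgroup $\KK \subseteq H$ furnished by claim~$(5)$ of Theorem~\ref{thm:T2}. First, I would fix the lattice. Since $\KK$ is an open, closed and compact subgroup of $H$ and the $\star$-map is a group homomorphism, the set $\vG \defeq \oplam(\KK) = \{ x \in L : x^{\star} \in \KK \}$ is a subgroup of $G$; as $\KK$ is compact with non-empty interior, $\oplam(\KK)$ is a Delone set, so $\vG$ is a lattice in $G$. The cosets of $\KK$ partition $H$ into disjoint compact open sets. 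Because $h \in \Cz(H)$, each superlevel set $\{ y \in H : |h(y)| \geqslant 1/k \}$ is compact and thus meets only finitely many cosets, so the collection of cosets meeting $\supp(h)$ is at most countable; I would enumerate these as $c^{}_1, c^{}_2, \ldots$ . Using the density of $\pi^{}_{H}(\cL)$ in $H$ and the openness of the $c^{}_i$, I would pick representatives of the form $s^{}_i = \tau_i^{\star}$ with $\tau^{}_i \in L$, and a short computation then yields $\oplam(c^{}_i) = \tau^{}_i + \vG$.

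Next, I would set $S^{}_n \defeq c^{}_1 \cup \dots \cup c^{}_n$, which is compact and open, whence $h^{}_n \defeq h \cdot 1^{}_{S^{}_n} \in \Cc(H)$, and define $\mu^{}_n \defeq \omega^{}_{h^{}_n}$. Then $\mu^{}_n$ is strongly almost periodic by \cite[Thm.~3.1]{LR1}, and its support is $\oplam(S^{}_n) = \bigcup_{i=1}^{n} (\tau^{}_i + \vG) = \vG + F^{}_n$ for the finite set $F^{}_n \defeq \{ \tau^{}_1, \dots, \tau^{}_n \}$, as required.

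For the $K$-norm convergence, I would first note that $h \in \Cz(H)$ forces $\| h - h^{}_n \|^{}_{\infty} = \sup_{y \notin S^{}_n} |h(y)| \to 0$: given $\varepsilon > 0$, the compact set $\{ |h| \geqslant \varepsilon \}$ lies in finitely many of the $c^{}_i$, hence in $S^{}_n$ for all large $n$. Since $\mu - \mu^{}_n = \omega^{}_{h - h^{}_n}$ is supported on $\vL = \supp(\mu)$, which is weakly uniformly discrete, there is for each compact $K$ a constant $C^{}_K$ bounding $\card\bigl( \vL \cap (x+K) \bigr)$ uniformly in $x$, and therefore
\[
   \| \mu - \mu^{}_n \|^{}_{K}
   \, = \, \sup_{x \in G} \, \lvert\omega^{}_{h-h^{}_n}\rvert(x+K)
   \, \leqslant \, C^{}_K \, \| h - h^{}_n \|^{}_{\infty}
   \xrightarrow{\; n \to \infty \;} 0 \ts .
\]

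Finally, for the Fourier--Bohr coefficients I would invoke the identity from \cite[Thm.~3.3]{LR1} already used in the proof of Theorem~\ref{thm:T2}, giving $c^{}_{\chi}(\mu^{}_n) = \dens(\cL)\, \widecheck{h^{}_n}(\chi^{\star})$ on $\pi^{}_{\widehat{G}}(\cL^{0})$ and $0$ elsewhere, while $c^{}_{\chi}(\mu) = \widehat{\mu}(\{ \chi \})$ equals $\dens(\cL)\, \widecheck{h}(\chi^{\star})$ on $\pi^{}_{\widehat{G}}(\cL^{0})$ and $0$ elsewhere by \eqref{eq:notsupp} and \eqref{eq:supp}. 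As $\lvert\widecheck{h^{}_n}(\chi^{\star}) - \widecheck{h}(\chi^{\star})\rvert \leqslant \| h^{}_n - h \|^{}_{L^1}$ uniformly in $\chi$, and dominated convergence yields $\| h^{}_n - h \|^{}_{L^1} = \int_{H \setminus S^{}_n} \lvert h\rvert \dd\theta^{}_{H} \to 0$, the coefficients converge, in fact uniformly in $\chi$. The main obstacle is the bookkeeping that pins everything to $\KK$: one must check that $\oplam(\KK)$ is genuinely a lattice with each active coset projecting to a single translate $\tau^{}_i + \vG$, that only countably many cosets are active (so that a \emph{sequence} of approximants exists), and that the two distinct decay modes of $h$ are available --- the sup-norm decay driving the measure $K$-norms through weak uniform discreteness, and the $L^1$ decay driving the Fourier--Bohr coefficients.
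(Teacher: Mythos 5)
Your proposal is correct and takes essentially the same route as the paper: both proofs truncate $h$ to finitely many cosets of the compact open subgroup $\KK$ (with coset representatives drawn from $\pi^{}_{H}(\cL)$ by density), set $\vG = \oplam(\KK)$, obtain strong almost periodicity of $\mu^{}_n = \omega^{}_{h^{}_n}$ from \cite[Thm.~3.1]{LR1}, and control $\| \mu - \mu^{}_n \|^{}_{K}$ by combining $\| h - h^{}_n \|^{}_{\infty} \to 0$ (a consequence of $h \in \Cz(H)$) with the weak uniform discreteness of $\vL$. The only difference is the final step, where the paper bounds $\bigl| \widehat{\mu}(\{\chi\}) - c^{}_{\chi}(\mu^{}_n) \bigr| = \bigl| \MM\bigl(\overline{\chi}\,(\mu - \mu^{}_n)\bigr) \bigr| \leqslant C \ts \| \mu - \mu^{}_n \|^{}_{K}$ directly, while you re-derive the coefficients via \cite[Thm.~3.3]{LR1} and the $L^1$-convergence $\| h - h^{}_n \|^{}_{L^1} \to 0$; both arguments are valid and yield convergence uniformly in $\chi$.
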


\begin{proof}
  We employ the setting of the proof of Theorem~\ref{thm:T2}.  Fix a
  compact set $K\subseteq G$ and set $\vG \defeq\oplam(\KK)$.  We will
  now construct an increasing sequence of finite sets $F_n\subseteq G$
  such that $\mu_n = \mu|_{\vG +F_n} \in \ts\SAP(G)$ and
  $\|\mu_n - \mu\|^{}_K \leqslant \tfrac{1}{n}$, {where
    $\|.\|^{}_{K}$ is defined as in \eqref{eq:def-norm}.}

  Since $\vL$ is weakly uniformly discrete, there exists an $N\in\NN$
  such that, for all $t\in G$,
\begin{equation}\label{eq:card}
   \card((t+\vL)\cap K) \, < \, N\ts .
\end{equation}
As usual, let $U=\{ y\in H: h(y)\neq 0\}$, where $h\in \Cz(H)$.  For
each $n\in \NN$, there exists a compact set $W_n\subseteq U$ such that
\begin{equation}\label{eq:htail}
     |h(x)|  \, < \,  \myfrac{1}{nN}
     \quad \text{for all } x\not\in W_n \ts .
\end{equation}
Since $\pi^{}_{H} (\cL)$ is dense in $H$ and $\KK$ is open in $H$, we have
$\pi^{}_{H} (\cL) + \KK = H$.  By the compactness of $W_n$, we can find
a finite set $F_n^{\star}\subseteq \pi^{}_{H} (\cL)$ such that
$W_n\subseteq F_n^{\star} +\KK$. Then, let
\[ 
    F^{}_n  \, \defeq \, \oplam(F_n^{\star})
    \quad \text{and} \quad
     h^{}_n \, \defeq \, h\ts\ts1^{}_{F_n^{\star}+\KK} \ts . 
\]
Since $\KK$ is open and closed in $H$ and $F_n^{\star}$ is finite,
$F_n^{\star}+\KK$ is also open and closed in $H$. Consequently, $h_n$
is continuous. Moreover, as $F_n^{\star} +\KK$ is compact, we have
$h_n\in\Cc(H)$. Setting
\[  
    \mu_n \, \defeq \, \omega^{}_{h_n} \ts ,
\] 
we have $\mu_n \in\SAP(G)$ by \cite[Thm.~3.1]{LR1} and
$\mu_n = \mu |^{}_{\vG +F_n}$ by construction.  Now, \eqref{eq:htail}
ensures that $|h(y)-h_n(y)|<\frac{1}{nN}$ for all $y\in H$ and thus
that $\bigl| \mu(\{x\})-\mu_n(\{x\}) \bigr| <\frac{1}{nN}$ for all
$x\in G$.  It is clear that $\supp(\mu-\mu_n)\subseteq\vL$.
Consequently, via \eqref{eq:card}, we see that
\[ 
   \|\mu - \mu_n\|^{}_K  \, < \, \myfrac{1}{n} \ts .  
\]
Note that the sets $W_n$, and thus $F_n^{\star}$ and $F_n$, may be
chosen to be increasing, as claimed.

Finally, for $\chi\in \widehat{G}$, observe that
\[ 
   \bigl| \widehat{\mu}(\{\chi\}) - c_{\chi}(\mu_n) \bigr|
   \, = \, \MM \bigl( \overline{\chi}(\mu-\mu_n) \bigr)
   \, \leqslant \,  C\ts \|\mu-\mu_n\|^{}_K \ts ,  
\]
for some $C>0$, which verifies the convergence of the Fourier--Bohr
coefficients.
\end{proof}

Comparing the results of this section with those of
Section~\ref{sec:meyer}, we see that, while sup-almost periodicity of
a pure point measure $\mu$ enables its representation as a model comb,
the weight function $h$ has compact support if and only if
$\supp (\mu)$ is FLC (or Meyer); see \cite[Thm.~5.5.2]{NS11}.  This
makes the calculations for doubly sparse sup-almost periodic measures
with only uniformly discrete support a little more delicate.
Nevertheless, we obtain almost everything that we did in
Section~\ref{sec:meyer}, apart from the support of $\mu$ being
crystallographic, and even this we `almost' get.

Now, we combine these results with Theorem~\ref{thm:ap} to obtain the
characterisation for positive definite measures with uniformly
discrete support.

\begin{coro}\label{coro:posdef}
  Let\/ $0\neq \mu =\sum_{x \in \vL} a (x) \, \delta_x$ be a positive
  definite measure with uniformly discrete support, $\vL$, and sparse
  FBS. Then, the conclusions of Theorem~\textnormal{\ref{thm:T2}},
  Corollary~\textnormal{\ref{coro:diffract}} and
  Proposition~\textnormal{\ref{prop5}} hold.
\end{coro}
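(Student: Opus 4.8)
The plan is to reduce everything to Theorem~\ref{thm:T2} by verifying its hypotheses; the only non-trivial task is to establish sup-almost periodicity of $\mu$, for which I would route through Theorem~\ref{thm:ap}, which itself requires strong almost periodicity as an input.

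First, I would secure translation-boundedness. Since $\mu$ is a positive definite, pure point measure, the coefficient function $a$ given by $a(x) = \mu(\{x\})$ is positive definite and supported in $\vL$ by \cite[Prop.~2.4]{LS1}, whence $a(0) \geqslant 0$ and $\lvert a(x)\rvert \leqslant a(0)$ for all $x \in \vL$. As $\vL$ is uniformly discrete, $\card\bigl(\vL \cap (x+E)\bigr)$ is bounded uniformly in $x$ for every compact $E$, so $\|\mu\|^{}_{E} < \infty$ and hence $\mu \in \cM^{\infty}(G)$. This is exactly the route indicated in the footnote to the proof of Theorem~\ref{thm:ap}, and it avoids any circular appeal to almost periodicity.

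Next, I would invoke the sparse-FBS hypothesis. Positive definite measures are Fourier transformable by \cite[Thm.~4.5]{BF}, and a sparse FBS forces $\widehat{\mu}$ to be a pure point measure, as recorded in Remark~\ref{rem:sparse}. For a translation-bounded measure, pure-pointedness of $\widehat{\mu}$ yields strong almost periodicity of $\mu$ by \cite[Cor.~4.10.13]{MoSt}. Thus $\mu$ is a positive definite, strongly almost periodic measure with uniformly discrete support, so Theorem~\ref{thm:ap} applies, and its conclusion~(2) gives that $\mu$ is sup-almost periodic.

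At this stage all hypotheses of Theorem~\ref{thm:T2} hold: $\mu$ is translation-bounded and sup-almost periodic, its support $\vL$ is uniformly discrete and hence weakly uniformly discrete, and $\mu$ has sparse FBS by assumption. Applying Theorem~\ref{thm:T2} then produces the CPS $(G,H,\cL)$ and the function $h \in \Cz(H)$ with conclusions~(1)--(6). Because Corollary~\ref{coro:diffract} and Proposition~\ref{prop5} are both stated \emph{under the conditions of Theorem~\textnormal{\ref{thm:T2}}}, their conclusions follow at once. The only mildly delicate point will be that very first step---obtaining translation-boundedness from positive definiteness alone---while everything downstream is a direct concatenation of already-established results.
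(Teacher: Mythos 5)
Your proposal is correct and follows essentially the same route as the paper: translation-boundedness from positive definiteness plus uniform discreteness (the argument the paper borrows from the proof of Theorem~\ref{thm:ap-2}, recorded in the footnote to Theorem~\ref{thm:ap}), strong almost periodicity via Remark~\ref{rem:sparse} and \cite[Cor.~4.10.13]{MoSt}, sup-almost periodicity from Theorem~\ref{thm:ap}, and then Theorem~\ref{thm:T2} together with Corollary~\ref{coro:diffract} and Proposition~\ref{prop5}. Your write-up merely makes explicit the steps the paper compresses into ``the rest is clear''.
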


\begin{proof}
  By the argument used in the proof of Theorem~\ref{thm:ap-2}, $\mu$
  is translation bounded. From Remark~\ref{rem:sparse}, we see that
  $\mu$ is an $\ts\SAP$-measure. Now, Theorem~\ref{thm:ap} implies
  that $\mu$ is sup-almost periodic, and the rest is clear.
\end{proof}

{There is also a Fourier-dual version, which we can
  formulate as follows.

\begin{coro}\label{coro:posdef-dual}
  Let\/ $0\neq \mu =\sum_{x \in \vL} a (x) \, \delta_x$ be a positive
  measure with sparse support, $\vL$. Further, assume that\/ $\mu$ is
  Fourier transformable and that\/ $\supp(\widehat{\mu})$ is uniformly
  discrete.  Then, the conclusions of
  Theorem~\textnormal{\ref{thm:T2}},
  Corollary~\textnormal{\ref{coro:diffract}} and
  Proposition~\textnormal{\ref{prop5}} hold.  \qed
\end{coro}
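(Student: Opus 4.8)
The plan is to argue by Fourier duality, reducing the statement to Corollary~\ref{coro:posdef} applied to $\widehat{\mu}$ in place of $\mu$, and then to transfer the resulting conclusions back to $\mu$ via the dual CPS. The key initial observation is that, since $\mu$ is a positive and Fourier-transformable measure, its transform $\widehat{\mu}$ is positive definite; this is the standard duality between positivity and positive definiteness for transformable measures, compare \cite{BF,MoSt}. As $\widehat{\mu}$ is then itself transformable, the inversion relation $\widehat{\widehat{\mu}\ts} = \mu^{\dagger}$ holds, so the Fourier--Bohr support of $\widehat{\mu}$ equals $\supp (\mu^{\dagger}) = -\vL$. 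This set is sparse because $\vL$ is: densities are invariant under the reflection $x \mapsto -x$, since reflecting a van Hove sequence again gives one.

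First, I would check that $\widehat{\mu}$ satisfies all hypotheses of Corollary~\ref{coro:posdef}: it is positive definite, its support $\supp(\widehat{\mu})$ is uniformly discrete by assumption, and it has sparse FBS by the remark above. That corollary then produces a CPS $(\widehat{G}, H', \cL')$ and some $h' \in \Cz(H')$ for which all conclusions of Theorem~\ref{thm:T2}, Corollary~\ref{coro:diffract} and Proposition~\ref{prop5} hold for $\widehat{\mu}$; in particular $\widehat{\mu} = \omega^{}_{h'}$, both $h'$ and $\widecheck{h'}$ lie in $\Cz \cap L^1$ with support of finite measure, and both $H'$ and $\widehat{H'}$ carry an open, closed and compact subgroup.

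Next, I would pass to the dual CPS $(G, \widehat{H'}, (\cL')^{0})$, writing $H = \widehat{H'}$ and $\cL = (\cL')^{0}$, so that $\dens (\cL) = \dens (\cL')^{-1}$. Starting from $\mu^{\dagger} = \widehat{\widehat{\mu}\ts} = \dens (\cL') \, \omega^{}_{\widecheck{h'}}$ and applying a reflection, one obtains $\mu = \omega^{}_{h}$ in this CPS, where $h$ is a constant multiple of the reflected $\widecheck{h'}$; since the Fourier transform and its inverse differ only by the substitution $\chi \mapsto \overline{\chi}$, membership in $\Cz \cap L^1$ and the finiteness of the window are preserved. Matching the six items of Theorem~\ref{thm:T2} is then pure bookkeeping: conclusions $(1)$--$(2)$ for $\mu$ come from items $(3)$--$(4)$ for $\widehat{\mu}$, conclusions $(3)$--$(4)$ for $\mu$ follow directly from transformability of $\mu = \omega^{}_{h}$ via \cite[Thm.~5.3]{CRS} together with Corollary~\ref{coro:cor1} (the FBS of $\mu$ being uniformly discrete, hence sparse), while $(5)$--$(6)$ for $\mu$ are items $(6)$--$(5)$ for $\widehat{\mu}$ read under $\widehat{H'} = H$. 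Corollary~\ref{coro:diffract} and Proposition~\ref{prop5} transfer along the same lines.

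The step I expect to be the main obstacle is ensuring that $\mu$ is itself translation bounded, which is tacitly required for the Eberlein-convolution autocorrelation in Corollary~\ref{coro:diffract} and for the $K$-norm approximation in Proposition~\ref{prop5}. Boundedness of the individual masses is automatic, as $\mu = \omega^{}_{h}$ with $h \in \Cz(H)$; the delicate point is that $\supp(\mu) = \vL$ is only assumed sparse, not weakly uniformly discrete, so a priori its translates might accumulate points. Here I would use the open, closed and compact subgroup $\KK \subseteq H$ from conclusion $(5)$: as $H/\KK$ is discrete and the window $\{ h \neq 0 \}$ has finite measure, for each $\varepsilon > 0$ the part of the window on which $\lvert h \rvert \geqslant \varepsilon$ meets only finitely many $\KK$-cosets, hence lies in finitely many translates of the lattice $\oplam(\KK)$, which bounds the mass carried in any fixed compact translate. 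Should this estimate prove stubborn, the clean fallback is to state Corollary~\ref{coro:diffract} and Proposition~\ref{prop5} on the $\widehat{\mu}$-side, where translation boundedness is guaranteed by Corollary~\ref{coro:posdef}, and to record for $\mu$ only the representation furnished by Theorem~\ref{thm:T2}.
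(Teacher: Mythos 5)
Your route is exactly the one the paper intends: the statement carries only a \qed\ precisely because it is meant to follow by Fourier duality from Corollary~\ref{coro:posdef}. Since $\mu$ is positive and transformable, $\widehat{\mu}$ is positive definite, hence itself transformable with $\widehat{\widehat{\mu}\,} = \mu^{\dagger}$, so $\widehat{\mu}$ has uniformly discrete support (by hypothesis) and sparse FBS (its transform is supported on $-\vL$); applying Corollary~\ref{coro:posdef} to $\widehat{\mu}$ and transferring through the dual CPS is the intended argument, and your bookkeeping for the six items of Theorem~\ref{thm:T2} is correct.

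The one step that does not work as written is your argument for translation boundedness of $\mu$. Cutting the window at level $\varepsilon$ controls only the points whose weight is at least $\varepsilon$: the set $\{\lvert h\rvert \geqslant \varepsilon\}$ is indeed compact (as $h \in \Cz(H)$), so those points lie in finitely many translates of $\oplam(\KK)$ and carry uniformly bounded mass. But nothing in this argument bounds the contribution of the points with weight \emph{below} $\varepsilon$; since $\vL$ is only assumed sparse, not weakly uniformly discrete, a translate $t+K$ may contain arbitrarily many of them, and a dyadic decomposition of the weights produces a series $\sum_k N_k \ts 2^{-k}$ in which the number $N_k$ of $\KK$-cosets met by $\{\lvert h\rvert \geqslant 2^{-k}\}$ is not controlled by $\theta^{}_{H}(U) < \infty$ (a window of finite measure can touch arbitrarily many cosets). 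Fortunately, the issue — and hence your weaker fallback — is unnecessary: the Fourier transform of \emph{any} transformable measure is automatically translation bounded (see \cite{ARMA1} and \cite[Sec.~4.9]{MoSt}), so transformability of $\widehat{\mu}$ (positive definiteness together with \cite[Thm.~4.5]{BF}) and the inversion formula $\widehat{\widehat{\mu}\,} = \mu^{\dagger}$ yield $\mu \in \cM^{\infty}(G)$ at once. With that in hand, $\mu$ is a translation-bounded transformable measure with pure point transform, hence strongly almost periodic, and the transfer of Corollary~\ref{coro:diffract} and Proposition~\ref{prop5} proceeds as you describe.
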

}

\section{Specific results for $G=\RR^d$}\label{sec:real}

For arguments in $G=\RR^d$, the usual framework is that of tempered
distributions.  We use $\cS (\RR^d)$ and $\cS' (\RR^d)$ to denote the
spaces of Schwartz functions and tempered distributions on $\RR^d$,
respectively, and $\langle T , \varphi \rangle \defeq T (\varphi)$ for
the pairing of a distribution and a test function.  The distributional
Fourier transform of $\mu\in \cS' (\RR^d)$ is the distribution
$\nu\in\cS' (\RR^d)$ such that
$\langle\nu,\varphi \ts \rangle = \bigl\langle\mu,
\widehat{\varphi}\ts \bigr\rangle$ for all test functions
$\varphi\in\cS(\RR^d)$. {Recall that a translation-bounded
  measure on $\RR^d$ is always a tempered distribution, referred to as
  a tempered measure; see \cite{Y} for general background, and
  \cite{ST} for further notions, such as translation-boundedness for
  tempered distributions.}

In previous sections, we have assumed our measures to be translation
bounded and Fourier transformable. The connection between the
distributional Fourier transform and Fourier transformability as an
unbounded Radon measure, as we have considered, was clarified in
\cite{Nicu}, where it was shown that {a measure $\mu$ on
  $\RR^d$ is Fourier transformable as a measure if and only if it is
  tempered and its distributional Fourier transform is a
  translation-bounded measure.}  Thus, in the Euclidean setting, a
measure $\mu$ is translation bounded and Fourier transformable if and
only if its distributional Fourier transform $\nu$ is { a
  translation-bounded and Fourier-transformable measure.} We begin
this section by establishing some sufficient conditions for
transformability and translation-boundedness.

In \cite{ST}, the notions of weak and strong almost periodicity for
tempered distributions were defined, and it was shown that these
definitions coincide with the classical ones for the class of
translation-bounded measures on $\RR^d$.

\begin{lemma}\label{lem:wap-td-is-tb}
  Let\/ $\mu\in\cS'(\RR^d)$ be a measure with uniformly discrete
  support that is weakly almost periodic as a tempered
  distribution. Then, $\mu$ is a translation-bounded measure and
  thus\/ $\mu\in \WAP(\RR^d)$.
\end{lemma}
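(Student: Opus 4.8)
The plan is to reduce translation-boundedness to a uniform bound on the weights $a (y) \defeq \mu (\{ y \})$, and to extract that bound from the almost periodicity hypothesis by testing against a suitably localised smooth bump. Once the weights are controlled, the passage from distributional to measure-theoretic weak almost periodicity is handled by the coincidence result of \cite{ST}.

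First I would exploit uniform discreteness of $\vL = \supp (\mu)$: it provides a symmetric open neighbourhood $W$ of $0$ with $(\vL - \vL) \cap (W + W) = \{ 0 \}$, so that every translate $x + W$ meets $\vL$ in at most one point. Choosing $\varphi \in C^{\infty}_{\mathsf{c}} (\RR^d) \subseteq \cS (\RR^d)$ with $\varphi (0) = 1$ and $\supp (\varphi) \subseteq W$, the convolution $\mu * \varphi$ is the genuine smooth function $(\mu * \varphi)(x) = \sum_{y \in \vL} a (y) \, \varphi (x - y)$, a locally finite sum that, by the choice of $W$, has at most one nonzero term for each $x$. In particular, $(\mu * \varphi)(y^{}_0) = a (y^{}_0)$ for every $y^{}_0 \in \vL$.

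The decisive step, which I expect to be the crux, is that weak almost periodicity of $\mu$ as a tempered distribution forces $\mu * \varphi$ to be a weakly almost periodic \emph{function} for every Schwartz $\varphi$; this is essentially the defining property in \cite{ST}, the point being that $C^{\infty}_{\mathsf{c}} (\RR^d)$ lies in the admissible class of test functions. Since weakly almost periodic functions are bounded, we obtain $M \defeq \| \mu * \varphi \|^{}_{\infty} < \infty$, and hence $\sup_{y \in \vL} \lvert a (y) \rvert \leqslant M$. The subtlety here is purely in invoking the correct distributional definition; no growth estimate on $\mu$ beyond boundedness of $\mu * \varphi$ is needed.

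With the weights uniformly bounded, translation-boundedness follows routinely. Uniform discreteness implies weak uniform discreteness, so $\delta^{}_{\vL}$ is translation bounded and $\sup_{x} \card \bigl( \vL \cap (x + E) \bigr) < \infty$ for every compact $E$; combined with $\lvert a (y) \rvert \leqslant M$, this gives $\| \mu \|^{}_{E} = \sup_{x} \lvert \mu \rvert (x + E) \leqslant M \, \sup_{x} \card \bigl( \vL \cap (x + E) \bigr) < \infty$, so $\mu \in \cM^{\infty} (\RR^d)$. Finally, once $\mu$ is known to be a translation-bounded measure, the two notions of weak almost periodicity—as a tempered distribution and as a measure—coincide by \cite{ST}, whence $\mu \in \WAP (\RR^d)$, completing the argument.
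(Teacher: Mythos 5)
Your proposal is correct and follows essentially the same route as the paper's own proof: both exploit uniform discreteness to choose a smooth bump $\varphi$ whose translates meet $\supp(\mu)$ in at most one point, so that the values $a(y)$ appear as values of $\mu * \varphi$, then extract a uniform bound on the weights from the fact that weak almost periodicity in the sense of \cite{ST} forces $\mu * \varphi$ to be a bounded (uniformly continuous) function, and finally invoke the coincidence result of \cite{ST} for translation-bounded measures to conclude $\mu\in\WAP(\RR^d)$. The only cosmetic difference is that the paper routes the boundedness of $\mu * \varphi$ through the translation-boundedness remark in \cite{ST}, whereas you appeal directly to the definition of weak almost periodicity for tempered distributions; these amount to the same thing.
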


\begin{proof}
  By \cite[Rem. 5.1]{ST}, $\mu = \sum_{x\in\vL} a(x) \ts \delta_{x}$
  is translation bounded as a tempered distribution, meaning that
  $\mu\ast f \in C_{\mathsf{u}}(\RR^d)$ for all $f\in\cS(\RR^d)$.
  Now, since $\supp(\mu)$ is uniformly discrete, we may choose an open
  neighbourhood $U$ of $0$ such that $(x+U) \cap (y+U) = \varnothing$
  for all $x,y\in\supp(\mu)$ with $x\neq y$. Select a function
  $f\in C^{\infty}_{\mathsf{c}}(\RR^d)$ such that $\supp(f)\subset U$
  and $f(0)=1$.  Via a simple calculation, one can verify that
\[
     \big\lvert \mu\ast f\ts \big\rvert (x) \, = \,
     \bigl( \lvert \mu \rvert \ast \lvert f \rvert \bigr) (x)
      \, = \, \big\lvert a(x) \big\rvert
\]
holds for all $x\in\supp(\mu)$; compare \cite[Lemma~5.8.3]{NS11}.
Then, there exists a $C>0$ such that
\[ 
  |a(x)| \, \leqslant \, C \quad \textrm{for all} \;\ts x\in\supp(\mu)
\]
and thus, since $\supp(\mu)$ is uniformly discrete, $\mu$ is a
translation-bounded measure. Hence, by \cite[Thm.~5.3]{ST}, $\mu$ is
also weakly almost periodic as a measure.
\end{proof}

\begin{remark}\label{rem:tempered}
  In particular, any tempered measure, $\mu$, whose distributional
  Fourier transform is a measure, is a weakly almost periodic tempered
  distribution \cite[Thm.~5.1]{ST}. Thus, if $\mu$ also has uniformly
  discrete support, the conclusions of Lemma~\ref{lem:wap-td-is-tb}
  hold.  \exend
\end{remark}

The following generalises \cite[Lemma~2]{LO1}, which assumes that both
$\vL = \supp (\mu)$ and $S = \supp (\nu)$ are uniformly discrete, and
\cite[Lemma~3.1]{LO2}, which shows only the translation-boundedness of
$\mu$.

\begin{lemma}\label{lem:FT-able}
  Let $\vL \subset \RR^d$ be uniformly discrete, let\/
  $S\subset \RR^d$ be weakly uniformly discrete, and consider\/
  $\mu, \nu \in \cS' (\RR^d)$ as given by
\begin{equation}\label{eq:mu_nu}
     \mu \, = \sum_{x\in\vL} a(x) \, \delta_x
     \quad \text{and} \quad
     \nu \, = \sum_{y\in S} b(y) \, \delta_y \ts .
\end{equation}    
Now, if\/ $\nu$ is the distributional Fourier transform of\/
$\mu$, then\/ $\mu$ and\/ $\nu$ are translation-bounded measures that
are Fourier transformable as measures and, as such, satisfy\/
$\widehat{\mu} = \nu$.
\end{lemma}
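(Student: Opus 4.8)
The plan is to break the symmetry between $\mu$ and $\nu$, using the stronger hypothesis on $\vL$ to control $\mu$ first and then transferring control to $\nu$ by an averaging argument rather than by separation of points. First I would apply Remark~\ref{rem:tempered} to $\mu$: since $\mu$ is a tempered measure whose distributional Fourier transform $\nu$ is again a measure, $\mu$ is a weakly almost periodic tempered distribution. As $\supp(\mu)\subseteq\vL$ is uniformly discrete, Lemma~\ref{lem:wap-td-is-tb} then upgrades this to the statement that $\mu$ is a translation-bounded measure with $\mu\in\WAP(\RR^d)$. Applying Remark~\ref{rem:tempered} to $\nu$ as well, whose distributional Fourier transform is $\mu^{\dagger}$ and hence a measure, $\nu$ is likewise a weakly almost periodic tempered distribution; but because $S$ is only weakly uniformly discrete, Lemma~\ref{lem:wap-td-is-tb} does \emph{not} apply to $\nu$, and this is where the actual work lies.

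The crux, and the main obstacle, is to show that $\nu$ is a translation-bounded \emph{measure}. Recall that weak uniform discreteness of $S$ is equivalent to $\delta^{}_{\!S}$ being translation bounded, so once we know that the atoms $b(y)=\nu(\{y\})$ are uniformly bounded, the estimate $|\nu|(x+K)\leqslant\bigl(\sup_{y}|b(y)|\bigr)\,\card\bigl(S\cap(x+K)\bigr)\leqslant\bigl(\sup_{y}|b(y)|\bigr)\,\|\delta^{}_{\!S}\|^{}_{K}$ yields translation boundedness uniformly in $x$. To bound the atoms, I would exploit that $\mu$ is now a genuine translation-bounded measure: for each $y$, the modulated measure $\mu^{}_{y}\defeq\overline{\chi^{}_{y}}\,\mu$ is translation bounded with $|\mu^{}_{y}|=|\mu|$, and its distributional Fourier transform is $T^{}_{-y}\nu$, so that the atom of that transform at $0$ is exactly $b(y)$. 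Since $\mu^{}_{y}$ inherits weak almost periodicity from $\mu$, its mean $\MM(\mu^{}_{y})$ exists and equals the atom at $0$ of its Fourier transform, whence $b(y)=\MM(\mu^{}_{y})$. The elementary bound $|\MM(\mu^{}_{y})|\leqslant\limsup_{n}\frac{|\mu|(A_n)}{\vol(A_n)}$, combined with the van Hove covering estimate $\limsup_{n}\frac{|\mu|(A_n)}{\vol(A_n)}\leqslant\|\mu\|^{}_{E}/\vol(E)<\infty$ for a fixed compact $E$ with nonempty interior (exactly as in the proof of Lemma~\ref{lem:L1}), then gives $\sup_{y}|b(y)|<\infty$, independently of $y$.

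With both $\mu$ and $\nu$ now known to be translation-bounded measures, I would finish by invoking the characterisation of transformability recalled at the start of this section: $\mu$ is tempered and its distributional Fourier transform $\nu$ is a translation-bounded measure, hence $\mu$ is Fourier transformable as a measure, the measure-theoretic transform coincides with the distributional one, and $\widehat{\mu}=\nu$. The stated equivalence between transformability of $\mu$ and that of $\nu$ then shows that $\nu$ is Fourier transformable as well, with $\widehat{\nu}=\mu^{\dagger}$, completing the argument. The genuinely delicate point remains the identity $b(y)=\MM(\mu^{}_{y})$ of the second step, that is, that the Fourier--Bohr coefficient at $y$ equals the atom of $\widehat{\mu}$ at $y$; this is the place where one must rely on the weak almost periodicity of $\mu$ (equivalently, on its Eberlein decomposition), since no uniform lower bound on the gaps of $S$ is available and the point-separation trick of Lemma~\ref{lem:wap-td-is-tb} is therefore unavailable.
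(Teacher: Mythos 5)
Your proposal is correct, and its skeleton coincides with the paper's: translation boundedness of $\mu$ via Lemma~\ref{lem:wap-td-is-tb}, reduction of translation boundedness of $\nu$ to a uniform bound on the atoms $b(y)$ (using weak uniform discreteness of $S$), identification of each atom with a Fourier--Bohr mean that is bounded by $\limsup_n \lvert\mu\rvert(A_n)/\vol(A_n) < \infty$, and a final appeal to the criterion of \cite{Nicu} to pass from distributional to measure transformability of both $\mu$ and $\nu$. Where you genuinely diverge is the mechanism behind the atom--mean identity, which is indeed the crux. The paper never applies that identity to the measure $\overline{\chi^{}_{y}}\ts\mu$ itself: for each $y\in S$ it chooses $c \in C^{\infty}_{\mathsf{c}}(\RR^d)$ with $\widehat{c}\ts(y)=1$ and $\int_{\RR^d}\lvert c\rvert \leqslant 2$, forms the bounded, uniformly continuous function $g=\mu*c$, whose distributional transform $\rho = \widehat{c}\,\nu$ is a \emph{finite} measure, and invokes \cite[Thm.~7.2]{ST} at the level of functions, $\rho(\{y\}) = \MM\bigl(\ee^{-2\pi\ii y(\cdot)}g\bigr)$; the bound $\lvert b(y)\rvert \leqslant \MM\bigl(\lvert\mu\rvert * \lvert c\rvert\bigr) \leqslant C$ then follows by Fubini and translation boundedness of $\mu$. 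Your modulation argument instead needs the identity ``atom of the distributional transform equals the Fourier--Bohr coefficient'' for a translation-bounded weakly almost periodic \emph{measure} whose distributional transform is merely known to be a measure --- a stronger form of \cite[Thm.~7.2]{ST} than the function-level one, but exactly the form the paper itself invokes later, in the proof of Proposition~\ref{prop:distri}, so your route is legitimate within the paper's toolkit and is marginally shorter. What the paper's smoothing step buys is that the delicate identity is only ever used for bounded uniformly continuous functions with finite measure transforms, where it belongs to classical Eberlein theory. You correctly flag the identity $b(y)=\MM\bigl(\overline{\chi^{}_{y}}\ts\mu\bigr)$ as the one point requiring real justification; to make the writeup complete you should cite \cite[Thm.~7.2]{ST} (or reproduce the convolution trick, which avoids the measure-level version altogether) rather than leave it as a remark that one must rely on weak almost periodicity.
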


\begin{proof}
  As noted above, the translation-boundedness of $\mu$ is a
  consequence of Lemma~\ref{lem:wap-td-is-tb}.  To see the
  translation-boundedness of $\nu$, it suffices to show that the set
  of coefficients, namely $\{b(y): y\in S\}$, is bounded.

  Let $y\in S$ be arbitrary but fixed, and select
  $c \in C^{\infty}_{\mathsf{c}} (\RR^d)$ with $\widehat{c} (y) = 1$
  and $\int_{\RR^d} \lvert c (x) \rvert \dd x \leqslant 2$, which is
  clearly possible. From \cite[Prop.~4.1]{ST}, we know that the
  function $g=\mu\ast c$ is bounded and uniformly continuous, and thus
  defines a regular tempered distribution. Its distributional Fourier
  transform, $\nu\,\widehat{c} \eqdef\rho$, is a finite measure.  Now,
  by \cite[Thm.~7.2]{ST}, we have
\[
  \rho ( \{ y \} ) \, = \,
  \MM \bigl( \ee^{- 2 \pi \ii y (.)} g \bigr) \ts .
\]
 But this gives us
\[
\begin{split}
  \lvert b(y) \rvert \, & = \lvert\ts \widehat{c}
    (y) \ts b (y) \rvert
  \, = \, \lvert \ts \rho ( \{ y \} ) \rvert \, = \,
   \big\lvert  \MM \bigl( \ee^{-2 \pi \ii y (\cdot)} \ts g  \bigr) 
   \big\rvert \, \leqslant \, \MM \bigl(  \lvert
   \ee^{- 2 \pi \ii y (\cdot)} \ts g \ts \rvert \bigr) 
   \, = \, \MM \bigl( \lvert \mu * c \rvert \bigr) \\[3mm]
   & = \, \lim_{n\to\infty}   \frac{1}{(2 n)^d} \int_{[-n,n]^d}
     \lvert \mu * c \rvert (y) \dd y \: \leqslant \:
     \limsup_{n\to\infty} \frac{1}{(2 n)^d} \int_{[-n,n]^d}
     \bigl(\lvert \mu\rvert * \lvert c \rvert \bigr) (y) \dd y \\[2mm]
   & = \, \limsup_{n\to\infty} \frac{1}{(2 n)^d} \int_{[-n,n]^d}
   \int_{\RR^d} \lvert c (y-t) \rvert \dd \lvert \mu \rvert (t) \dd y \ts .  
\end{split}
\]

Since $c$ has compact support, there exists some $m$ such that
$\supp (c) \subseteq [-m,m]^d$. Then, for all $y\in [-n,n]^d$, we have
$c (y-t) =0$ outside of $[-n-m, n+m]^d$. Via Fubini, we thus get
\[
\begin{split}
  \lvert b(y) \rvert \, & \leqslant \, \limsup_{n}
  \frac{1}{(2 n)^d} \int_{[-n,n]^d} \int_{[-n-m,n+m]^d}
  \lvert c (y-t) \rvert \dd \lvert \mu \rvert (t) \dd y \\[2mm]
  & =\, \limsup_{n}
  \frac{1}{(2 n)^d} \int_{[-n-m,n+m]^d} \int_{[-n,n]^d}
  \lvert c (y-t) \rvert \dd  y \dd \lvert \mu \rvert ( t)  \\[2mm]
  & \leqslant \, \limsup_{n}
  \frac{1}{(2 n)^d} \int_{[-n-m,n+m]^d} \int_{\RR^d}
  \lvert c (y-t) \rvert \dd  y \dd \lvert \mu \rvert ( t)  \\[2mm]
  & \leqslant \, \limsup_{n}
  \frac{2}{(2 n)^d} \lvert \mu \rvert \bigl( [-n-m, n+m]^d \bigr)
  \, \eqdef \, C \ts .
\end{split}
\]

Since $\mu$ is translation bounded, we have $C < \infty$ and hence
$\lvert b(y) \rvert \leqslant C$ for all $y\in S$, which proves that
the set of coefficients is indeed bounded. Finally, since $S$ is
weakly uniformly discrete, it follows that $\nu$ is translation
bounded.

Now, since the measure $\mu$ is tempered and its Fourier transform as
a tempered distribution is a translation-bounded measure, $\mu$ is
Fourier transformable as a measure, by \cite[Thm.~5.2]{Nicu}. The same
statements hold for the measure $\nu$, and we have $\widehat{\mu}=\nu$
as claimed.
\end{proof}

{\begin{remark}

  An interesting question in the context of Lemma~\ref{lem:FT-able}
  is whether one could relax the condition of uniform discreteness
  of $\vL$ to weak uniform discreteness, which would strengthen some
  results in this section. At present, we do not have an answer to
  this question. Also, if true, a proof would need other methods.
  Indeed, already Lemma~\ref{lem:wap-td-is-tb} fails for measures
  with weakly uniformly discrete support, as
\[
  \mu \, \defeq \sum_{k=1}^{\infty} k \,
  \bigl( \delta^{}_{k + \frac{1}{k}} +
     \delta^{}_{k - \frac{1}{k}} - 2 \ts \delta^{}_{k}\bigr)
\]
  clearly demonstrates. \exend
\end{remark}}

Recalling from Section~\ref{sec:sap} that positive definite Radon
measures are Fourier transformable, and again using
\cite[Thm.~5.2]{Nicu}, we summarise some useful sufficient conditions
as follows.

\begin{coro}\label{coro:FTable}
  Let\/ $\mu$ be a tempered measure on\/ $\RR^d$ such that its
  distributional Fourier transform, $\nu$, is a measure. Under any of
  the following conditions, $\mu$ is translation bounded and Fourier
  transformable $($and thus so is\/ $\nu = \widehat{\mu}\ts )\nts :$
\begin{enumerate}\itemsep=2pt
\item $\supp(\mu)$ is uniformly discrete and\/ $\supp(\nu)$ is
  weakly uniformly discrete;
\item $\supp(\mu)$ is uniformly discrete and $\mu$ is positive
  definite;
\item $\supp(\mu)$ is uniformly discrete and $\nu$ is
  translation bounded;
\item $\mu$ is translation bounded and positive definite;
\item $\mu$ and $\nu$ are both translation bounded.  \qed
\end{enumerate}
\end{coro}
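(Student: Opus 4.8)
The plan is to reduce all five cases to a single statement, namely that both $\mu$ and $\nu$ are translation-bounded measures, and then to read off the conclusion from the equivalence in \cite[Thm.~5.2]{Nicu}. Recall that this result says that a tempered measure is Fourier transformable as a measure precisely when its distributional Fourier transform is a translation-bounded measure. Thus, once we know that $\mu$ and $\nu$ are both translation bounded, applying \cite[Thm.~5.2]{Nicu} to $\mu$ (whose distributional transform $\nu$ is a translation-bounded measure) yields that $\mu$ is Fourier transformable with $\widehat{\mu}=\nu$; applying the same theorem to $\nu$ (whose distributional transform is $\mu^{\dagger}$, again a translation-bounded measure) gives transformability of $\nu$ as well, together with $\widehat{\nu}=\mu^{\dagger}$. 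In particular, case $(5)$ is exactly this reduced statement and needs no further argument, so the work lies in establishing the translation-boundedness of $\mu$ and of $\nu$ in the remaining cases.

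For the translation-boundedness of $\mu$, I would argue as follows. In cases $(1)$ and $(3)$, the support $\vL=\supp(\mu)$ is uniformly discrete, and by hypothesis the distributional transform $\nu$ of $\mu$ is a measure; hence, by Remark~\ref{rem:tempered}, $\mu$ is weakly almost periodic as a tempered distribution, and Lemma~\ref{lem:wap-td-is-tb} then forces $\mu$ to be a translation-bounded measure. In case $(2)$, the same route applies, but one may alternatively invoke positive definiteness directly: writing $\mu=\sum_{x\in\vL}a(x)\,\delta_x$, one has $\lvert a(x)\rvert\leqslant a(0)$ for all $x$ (as in the proof of Theorem~\ref{thm:ap}), and boundedness of the coefficients on the uniformly discrete set $\vL$ already gives translation-boundedness. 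In case $(4)$, $\mu$ is translation bounded by assumption.

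For the translation-boundedness of $\nu$, cases $(3)$ and $(5)$ are immediate from the hypotheses. In the positive-definite cases $(2)$ and $(4)$, the measure $\mu$ is Fourier transformable as a measure by \cite[Thm.~4.5]{BF}, so the forward direction of \cite[Thm.~5.2]{Nicu} makes $\nu=\widehat{\mu}$ a translation-bounded measure automatically. Case $(1)$ is handled in one stroke by Lemma~\ref{lem:FT-able}, which, under uniform discreteness of $\vL$ and weak uniform discreteness of $S=\supp(\nu)$, already delivers all conclusions of the corollary, including the boundedness of the coefficients $b(y)$ and the identity $\widehat{\mu}=\nu$. Collecting these, in every case both $\mu$ and $\nu$ are translation-bounded measures, and the reduction of the first paragraph completes the proof.

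I expect no deep obstacle here; the corollary is essentially an exercise in organising known implications. The only genuinely substantial input is the coefficient bound underlying case $(1)$, but that is precisely the content of Lemma~\ref{lem:FT-able}, whose mean-value estimate is already in place. The one point that warrants care is the interplay between the distributional and the measure-theoretic Fourier transforms: one must consistently apply \cite[Thm.~5.2]{Nicu} to pass between transformability as a measure and the statement that the distributional transform is a translation-bounded measure, and apply it symmetrically to $\nu$ in order to obtain the transformability of $\nu$ rather than only its translation-boundedness.
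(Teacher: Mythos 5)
Your proof is correct and follows essentially the route the paper intends: the corollary is stated with no separate proof precisely because it assembles Lemma~\ref{lem:FT-able} (case (1)), Remark~\ref{rem:tempered} together with Lemma~\ref{lem:wap-td-is-tb} (translation-boundedness of $\mu$ under uniform discreteness of its support), the transformability of positive definite measures from \cite[Thm.~4.5]{BF}, and the symmetric use of \cite[Thm.~5.2]{Nicu} to pass between the distributional and measure transforms, exactly as you do. Your explicit reduction of all five cases to ``both $\mu$ and $\nu$ are translation-bounded measures'' is a clean way of organising the same ingredients.
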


Note that a pure point measure may be translation bounded and still
have dense support, {such as the diffraction measure of the
  Fibonacci chain \cite[Sec.~9.4.1]{TAO1}.}  Consequently, sparseness
conditions on the support of a measure can be sufficient but are not
necessary for translation-boundedness.

{At this point, we can harvest Lemma~\ref{lem:FT-able} 
to state the following slightly stronger version of 
Theorem~\ref{thm:trig-coeffs} for the specific case  $G= \RR^d$.

\begin{theorem}
  Let\/ $0 \ne \mu \in \cM(\RR^d)$ be a tempered measure such that\/
  $\supp(\mu)$ is contained in a Meyer set, and assume that the 
  distributional Fourier transform\/ $\nu$ of\/ $\mu$ is a measure whose 
  support is\/ $\cB$-sparse for some van Hove sequence\/ $\cB$ in\/
  $\RR^d$. Then, there is a lattice\/ $\vG \subset \RR^d$ together with
  elements\/ $\tau^{}_1, \ldots , \tau^{}_N, \sigma^{}_1, \ldots , 
  \sigma^{}_M \in \RR^d$ and trigonometric polynomials\/ $P^{}_{i}$
  and\/ $Q^{}_{j}$ on $\RR^d$ such that
\[
    \pushQED{\qed}
    \mu \, = \sum_{i=1}^N \, \sum_{x \in \vG+ \tau^{}_{i}} \!
     P^{}_{i} (x) \, \delta^{}_{x} \qquad \text{and} \qquad
    \widehat{\mu} \, = \sum_{j=1}^{M} \,
    \sum_{y \in \vG^{\ts 0}+ \sigma^{}_j} \! \! Q^{}_{j} (y) \, 
    \delta^{}_{y} \ts .  \qedhere \popQED
\]
\end{theorem}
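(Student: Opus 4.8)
The plan is to reduce the statement to Theorem~\ref{thm:trig-coeffs} by upgrading the purely distributional hypotheses to the measure-theoretic ones required there; the only real work is to show that $\mu$ is a translation-bounded measure which is Fourier transformable \emph{as a measure}, with its measure transform equal to the given distributional transform $\nu$. Two observations are immediate. Since $\supp(\mu)$ lies in a Meyer set, it is uniformly discrete; and since $\supp(\nu)$ is $\cB$-sparse, it is locally finite by Remark~\ref{rem:loc-fin}, so that $\nu$ is a pure point measure. We thus write $\mu = \sum_{x\in\vL} a(x)\,\delta_{x}$ with $\vL = \supp(\mu)$ uniformly discrete.

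First I would establish translation-boundedness of $\mu$. As $\mu$ is a tempered measure whose distributional Fourier transform $\nu$ is a measure, Remark~\ref{rem:tempered} shows that $\mu$ is weakly almost periodic as a tempered distribution, and then Lemma~\ref{lem:wap-td-is-tb}, which applies precisely because $\vL$ is uniformly discrete, yields $\mu \in \cM^{\infty}(\RR^d)$ with $\mu \in \WAP(\RR^d)$. The natural next move is to invoke Lemma~\ref{lem:FT-able} directly, which would simultaneously hand us that $\mu$ and $\nu$ are translation-bounded, transformable measures with $\widehat{\mu}=\nu$.

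The main obstacle is exactly the transform-side hypothesis of Lemma~\ref{lem:FT-able}: the lemma asks $\supp(\nu)$ to be \emph{weakly uniformly discrete}, whereas we are given only that it is $\cB$-sparse, and by Example~\ref{ex:wud-sparse} together with Theorem~\ref{thm:equivalences} the latter is strictly weaker. Rather than trying to verify weak uniform discreteness of $\supp(\nu)$ by hand, I would promote $\mu$ to a weighted model comb. Since $\mu$ is weakly almost periodic with pure point (distributional) transform, its null-weakly-almost-periodic part must vanish, so $\mu$ is strongly almost periodic. As $\supp(\mu)$ is a Meyer set, \cite[Thm.~5.5.2]{NS11} then produces a CPS $(\RR^d, H, \cL)$ and some $h \in \Cc(H)$ with $\mu = \omega^{}_h$; such a weighted model comb is Fourier transformable as a measure, with $\widehat{\mu} = \dens(\cL)\,\omega^{}_{\widecheck{h}}$ by \cite[Thm.~5.3]{CRS}, and this measure transform coincides with the distributional transform $\nu$ by \cite[Thm.~5.2]{Nicu}. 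In particular, $\nu = \widehat{\mu}$ is a translation-bounded measure, so $\supp(\nu)$ is weakly uniformly discrete after the fact, which retroactively legitimises the Lemma~\ref{lem:FT-able} point of view.

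With $\mu \in \cM^{\infty}(\RR^d)$, $\mu \neq 0$, $\supp(\mu)$ contained in a Meyer set, and $\widehat{\mu} = \nu$ having $\cB$-sparse support, the hypotheses of Theorem~\ref{thm:conj1} are met. Theorem~\ref{thm:trig-coeffs} then delivers the lattice $\vG \subset \RR^d$, the finite translate sets, and the trigonometric-polynomial representations of $\mu$ and $\widehat{\mu}$, which completes the argument.
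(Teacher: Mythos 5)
You correctly identify the point that the paper's one\--line justification glosses over: Lemma~\ref{lem:FT-able} cannot be invoked verbatim, since $\cB$-sparseness of $\supp(\nu)$ is strictly weaker than the weak uniform discreteness that lemma requires. Your opening steps (translation\--boundedness via Remark~\ref{rem:tempered} and Lemma~\ref{lem:wap-td-is-tb}; vanishing of $\mu^{}_0$ because $\nu$ is pure point; the representation $\mu=\omega^{}_h$ with $h\in\Cc(H)$ via \cite[Thm.~5.5.2]{NS11}) are fine. The genuine gap is the next assertion: a weighted model comb $\omega^{}_h$ with $h\in\Cc(H)$ is \emph{not} automatically Fourier transformable as a measure, and \cite[Thm.~5.3]{CRS} does not say so. That theorem characterises transformability of $\omega^{}_h$ by the integrability condition $\widecheck{h}\in L^1\bigl(\widehat{H}\bigr)$, which is exactly why the paper only ever invokes it with transformability as a \emph{hypothesis} (see the proof of Proposition~\ref{P1}: ``Since $\mu$ is Fourier transformable, \cite[Thm.~5.3]{CRS} implies \dots''). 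There exist $h\in\Cc(H)$ with $\widecheck{h}\notin L^1\bigl(\widehat{H}\bigr)$, and for such $h$ the distributional transform of $\omega^{}_h$ fails to be a measure, so $\omega^{}_h$ is not transformable. Your closing remark that translation\--boundedness of $\nu$ is ``retroactively legitimised'' is circular: by \cite[Thm.~5.2]{Nicu}, translation\--boundedness of $\nu$ is precisely what must be established \emph{before} measure transformability can be asserted. A telltale sign of the problem is that your argument never uses the $\cB$-sparseness of $\supp(\nu)$ until the final appeal to Theorem~\ref{thm:conj1}; without that hypothesis the intermediate claim is simply false.

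The sparseness has to enter exactly at this step, and the paper's machinery offers two ways to do it. (a) Staying with your model\--comb picture: by \cite[Thm.~3.3]{LR1} and \cite[Thm.~7.2]{ST}, the atoms of $\nu$ are the Fourier--Bohr coefficients $c^{}_{\chi}(\mu)=\dens(\cL)\,\widecheck{h}(\chi^{\star})$, so the set of atoms equals $\oplam(U)$ in the dual CPS, where $U=\{z\in\widehat{H} : \widecheck{h}(z)\neq 0\}$ is open; since this set lies in the $\cB$-sparse set $\supp(\nu)$, Proposition~\ref{prop:sparse-CPS} (or Corollary~\ref{coro:cor1}) gives $\theta^{}_{\widehat{H}}(U)<\infty$, Fact~\ref{fact:h-is-L1} then gives $\widecheck{h}\in L^1\bigl(\widehat{H}\bigr)$, and only now does \cite[Thm.~5.3]{CRS} deliver transformability together with $\widehat{\mu}=\dens(\cL)\,\omega^{}_{\widecheck{h}}=\nu$. (b) The route the paper intends by ``harvesting'' Lemma~\ref{lem:FT-able}: with $\mu$ weakly almost periodic, $\mu^{}_{\mathsf{s}}=\mu\neq 0$, Meyer set support, and $S=\{\chi : c^{}_{\chi}(\mu)\neq 0\}=\supp(\nu)$ being $\cB$-sparse, Corollary~\ref{coro:wap} yields $\supp(\nu)\subseteq\vG^{\ts 0}+F'$; this is uniformly, hence weakly uniformly, discrete, so Lemma~\ref{lem:FT-able} now applies and gives $\widehat{\mu}=\nu$ as measures. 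Either way, Theorem~\ref{thm:trig-coeffs} then finishes the proof; your write\--up is missing precisely this bridge.
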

}

The above results allow us to use the results of
Section~\ref{sec:meyer} in considering a question posed by Meyer
\cite{Meyer}, namely whether there exists a pair of tempered measures
$\mu,\nu$ on $\RR^d$, defined as in \eqref{eq:mu_nu}, such that $\nu$
is the distributional Fourier transform of $\mu$, $\vL$ is a fully
Euclidean model set, and $S$ is locally finite.

\begin{remark} 
  Recall that Meyer's definition of a model set in the context of this
  question requires that the internal space be $H =\RR^n$. As stated in
  Section~\ref{sec:CPS}, we always refer to a CPS of the form
  $(\RR^d,\RR^n,\cL)$ as a \emph{fully Euclidean CPS}.  \exend
\end{remark}

We require one further result as follows.

\begin{lemma}\label{lem:m=0} 
  Let\/ $(\RR^d, \RR^n, \cL)$ be a fully Euclidean CPS, and let\/
  $\oplam(W)$ be a model set in this CPS. If there exists a lattice\/
  $\vG \subset \RR^d$ and a finite set\/ $F \subset \RR^d$ such that
\[
     \oplam(W) \, \subseteq \, \vG+F\ts ,
\]
then\/ $n=0$ and\/ $\oplam(W)$ is a lattice in\/ $\RR^d$.
\end{lemma}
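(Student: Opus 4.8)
The plan is to split the model set according to the finitely many cosets of $\vG$ that it can meet, to show that each piece is trapped in a bounded neighbourhood of a proper affine subspace unless the star map is trivial on $\vG\cap L$, and then to read off $n=0$ from the internal coordinates. First I would record that, since $W$ is relatively compact with nonempty interior, $\oplam(W)$ is a model set, hence a Meyer set, and in particular relatively dense and uniformly discrete. Writing $M \defeq L \cap \vG = \pi^{}_{G}\bigl( \cL \cap (\vG \times \RR^{n})\bigr)$ and $\Lambda^{}_{f} \defeq \oplam(W) \cap (\vG + f)$ for $f\in F$, the hypothesis $\oplam(W) \subseteq \vG + F$ yields the finite decomposition $\oplam(W) = \bigcup_{f \in F} \Lambda^{}_{f}$. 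For $x,y$ in the same piece $\Lambda^{}_{f}$ one has $x-y\in\vG$ and, since $L$ is a group, $x-y\in L$, so $x-y\in M$; hence each nonempty $\Lambda^{}_{f}$ lies in a single translate $x^{}_{f}+M$.

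Next I would exploit the CPS axioms. The group $\cL^{}_{\vG} \defeq \cL \cap (\vG \times \RR^{n})$ is a discrete subgroup of $\RR^{d}\times\RR^{n}$, and since $\pi^{}_{G}$ is injective on $\cL$, it has rank $r=\mathrm{rank}(M)$ and spans an $r$-dimensional subspace $U$ on which $\pi^{}_{G}$ restricts to a linear isomorphism onto $\mathrm{span}^{}_{\RR}(M)$. Consequently $U$ is the graph of a linear map $A\colon \mathrm{span}^{}_{\RR}(M)\to\RR^{n}$, and $m^{\star}=A m$ for all $m\in M$. Because $W$ is bounded, the preimage $A^{-1}(W-x^{\star}_{f})$ stays within a bounded distance of $\ker A$, so each $\Lambda^{}_{f}$ is contained in a bounded neighbourhood of the affine subspace $x^{}_{f}+\ker A$, of dimension $r-\mathrm{rank}(A)\leqslant d$.

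The decisive step is then a dimension count. A finite union of bounded neighbourhoods of affine subspaces of dimension at most $d-1$ is never relatively dense in $\RR^{d}$, since arbitrarily large balls avoid it. As $\oplam(W)$ is relatively dense, this forces $r-\mathrm{rank}(A)=d$, hence $r=d$ and $A=0$; in particular $m^{\star}=0$ for every $m\in M$. Then every $x\in\Lambda^{}_{f}$ satisfies $x^{\star}=x^{\star}_{f}$, so $\{x^{\star}:x\in\oplam(W)\}$ is finite. On the other hand, this set equals $\pi^{}_{H}(\cL)\cap W$, which is dense in the nonempty open set $W^{\circ}$ because $\pi^{}_{H}(\cL)$ is dense in $\RR^{n}$; for $n\geqslant 1$ it is therefore infinite, a contradiction. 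Hence $n=0$, and then $W=\{0\}$ forces $\oplam(W)=L=\pi^{}_{G}(\cL)=\cL$, a lattice in $\RR^{d}$.

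I expect the main obstacle to be the middle paragraph: establishing the graph (linear) structure of the star map on $M$ via the subspace spanned by $\cL^{}_{\vG}$, and deducing the confinement of each coset piece to a bounded neighbourhood of $x^{}_{f}+\ker A$. Once that is in place, the non-relative-density of finitely many thickened proper subspaces and the density of $\pi^{}_{H}(\cL)\cap W$ in $W^{\circ}$ are routine, and the reduction $n=0$ together with $\oplam(\{0\})=\cL$ finishes the argument.
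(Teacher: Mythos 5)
Your proof is correct, but it follows a genuinely different route from the paper's. The paper first replaces $\vG$ by a commensurate sublattice contained in $L$ (via an lcm/finite-index argument) and then invokes \cite[Lemma~5.5.1]{NS11} twice to \emph{reverse} the inclusion, obtaining $\vG \subseteq \oplam(W+F_2^{\star})$; the punchline there is group-theoretic: $\vG^{\star}$ is a subgroup of $\RR^n$ whose closure is compact, hence trivial, after which $W$ is trapped in a finite set, contradicting its non-empty interior unless $n=0$. You never touch $\vG$ itself and never need the reverse-inclusion lemma: you work with $M = L \cap \vG$, use the two CPS axioms (injectivity of $\pi^{}_{G}$ on $\cL$, density of $\pi^{}_{H}(\cL)$) to realise the $\star$-map on $M$ as the restriction of a linear map $A$ defined on $\mathrm{span}^{}_{\RR}(M)$ --- the graph description of $\mathrm{span}^{}_{\RR}\bigl(\cL \cap (\vG\times\RR^n)\bigr)$ is sound, since $\pi^{}_{G}$ maps that $r$-dimensional subspace onto the $r$-dimensional space $\mathrm{span}^{}_{\RR}(M)$ --- and then let relative density of the model set, via a volume/dimension count against finitely many thickened affine subspaces, force $\ker A = \RR^d$, i.e.\ $A=0$ and $\mathrm{rank}(M)=d$. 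Your endgame (finitely many values $x^{\star}_{f}$ versus density of $\pi^{}_{H}(\cL)\cap W^{\circ}$ in $W^{\circ}$) plays the same role as the paper's ``$W$ finite'' contradiction. What each approach buys: the paper's argument is shorter given the machinery of \cite{NS11} and isolates a clean structural fact (no non-trivial compact subgroups of $\RR^n$), which is the natural generalisation-ready formulation; yours is more self-contained and elementary, relying only on the Delone property of model sets and Euclidean linear algebra, at the cost of the more delicate graph-plus-dimension-count bookkeeping that you correctly identified as the crux. Both proofs are valid; the routine facts you defer (non-relative-density of finitely many bounded neighbourhoods of proper affine subspaces, and infinitude of a dense subset of a non-empty open set in $\RR^n$ for $n\geqslant 1$) are indeed routine.
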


\begin{proof}
  Suppose that such sets $\vG,F\subset \RR^d$ exist. We first show
  that we can choose them such that
  $\vG,F\subseteq L= \pi^{}_G (\cL)$.  Note that 
  \cite[Lemma~5.5.1]{NS11} implies 
  $\vG+F \subseteq \oplam(W) + F^{}_0$ for some
  finite set $F^{}_0$. Then, we get
\[
     \vG \, \subseteq \, \vG + F - F \, = \,
      \oplam(W) +F'
\]
with $F' = F^{}_0 -F$, which is a finite set.  As $\vG$ is a lattice
in $\RR^d$, there are vectors $v_1,\ldots , v_d \in \RR^d$ such that
$\vG = \ZZ\ts v^{}_{1} \oplus \dots \oplus \ZZ \ts v^{}_{d}$.  For any
fixed $j\in\{1,\ldots, d\}$, we have
$ m \ts v_j \in \vG \subseteq \oplam(W)+F'$ for every $m\in\NN$.  As
$F'$ is finite, there exist positive integers $n^{}_1\neq n^{}_2 $ and
a $t \in F'$ such that $ n^{}_1 \ts v_j$ and $ n^{}_2 \ts v_j$ lie in
$\oplam(W)+t$.  Thus, we have
\[ 
   (n^{}_1 - n^{}_2) \ts v_j \, \in 
   \oplam(W) - \oplam(W) \, \subseteq \, L \ts .
\]
In this way, for each $j\in\{1,\ldots, d\}$, we find some
$m_j\in\NN_0$ such that $m_j \ts v_j\in L$.  Setting
$\ell = \textrm{lcm}(m^{}_1,\ldots , m^{}_d)$, we have
\[
    \ell \vG \, \subseteq \, L \ts .
\]
Now, as $\ell \vG\subseteq \vG$ has finite index, there exists a finite
set $J\subset\RR^d$ such that $\vG\subseteq \ell \vG+J$, and we get
\[ 
    \oplam(W) \, \subseteq \, \ell \vG + F+J  \ts .
\] 
Define $F'' = (F+J)\cap L$.  For $x\in \oplam(W)$, there exist
$y\in \ell \vG$ and $z\in (F+J)$ such that $x=y+z$. But as $x\in L$
and $y\in L$, we must have $z\in L\cap (F+J)$, Consequently,
\[ 
   \oplam(W) \, \subseteq \, \ell \vG + F'' \ts ,
\]
where the lattice $\ell \vG$ and the finite set $F''$ are both contained
in $L$.

To continue, we relabel so that, w.l.o.g., $\oplam(W) \subseteq \vG+F$
with $\vG,F\subseteq L$.  Now, invoking \cite[Lemma~5.5.1]{NS11},
there exists a finite set, $F^{}_1\subset \RR^d$, such that
\[
    \vG+F \, \subseteq \, \oplam(W)+F^{}_1 \ts ,
\]
and since $\vG,F,\oplam(W)\subseteq L$, we may as above choose 
$F^{}_1$ such that $F^{}_1 \subset L$.  Then,
\[
    \vG \, \subseteq \, \oplam(W) + F^{}_1 - F 
    \, = \, \oplam(W) - F^{}_2 \ts ,
\]
with a finite set $F^{}_2 \subseteq L$, and thus
\[
    \vG \, \subseteq \, \oplam(W + F_2^{\star}) \ts .
\]
Define $Z\defeq\{x^{\star} : x\in \vG\} = \vG^{\star}$. Then, $Z$ is a
subgroup of $\RR^n$, and so is its closure, $\overline{Z}$. Since
$\overline{Z}\subseteq \overline{W} + F_2^{\star}$, we see that
$\overline{Z}$ is a compact subgroup of $\RR^n$, so we must have
$\overline{Z} = \{0\}$.

Now, recalling that $\oplam(W)\subseteq \vG+F$, we have
$W\subseteq \overline{Z} + F = F$, so $W$ is finite.  But $W$ has
non-empty interior, so we must have $n=0$.  To conclude, we note that,
since $W\subseteq \{0\} = \RR^0$, we have $W=\{0\}$. Hence,
$\oplam(W)$ is a subgroup of $\RR^d$ and thus is a lattice.
\end{proof}

By combining Theorem~\ref{thm:conj1} with Lemma~\ref{lem:m=0} and
Corollary~\ref{coro:FTable}, we can answer a weaker version of Meyer's
question.  Recall that a sparse point set (precisely, $\cB$-sparse for
some van Hove sequence $\cB$) is necessarily locally finite.

\begin{coro}\label{coro:mey}
  There is no {tempered} measure\/
  $0\ne\mu=\sum_{\lambda \in \vL} a(\lambda)\, \delta_\lambda$
  supported inside a model set\/ $\vL\subset \RR^d$ in a non-trivial,
  fully Euclidean CPS\/ $(\RR^d ,\RR^n , \cL)$ such that the
  distributional Fourier transform\/ $\nu$ is a translation-bounded
  measure with sparse support.
\end{coro}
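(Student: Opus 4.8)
The plan is to argue by contradiction: I would assume that such a measure $\mu$ exists and then deduce that the internal space must be trivial, i.e.\ $n=0$, contradicting the non-triviality of the CPS $(\RR^d,\RR^n,\cL)$.

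First I would settle transformability. Since $\vL$ is a model set, it is a Meyer set and hence uniformly discrete, so $\supp(\mu)\subseteq\vL$ is uniformly discrete; as $\nu$ is assumed to be a translation-bounded measure, condition~(3) of Corollary~\ref{coro:FTable} applies and yields that $\mu$ is a translation-bounded, Fourier-transformable measure with $\widehat{\mu}=\nu$. In particular, $\mu\in\cM^{\infty}(\RR^d)$. Moreover, since $\supp(\nu)$ is a sparse, and hence locally finite, point set, $\nu=\widehat{\mu}$ is automatically pure point, so $\mu$ has $\cB$-sparse FBS for the relevant van Hove sequence $\cB$.

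Next I would invoke the structural results of Section~\ref{sec:meyer}. As $\mu\neq 0$ has $\cB$-sparse FBS, Remark~\ref{rem:sparse} shows that $\mu$ is strongly almost periodic and that $\supp(\mu)$ is relatively dense. The hypotheses of Theorem~\ref{thm:conj1} are now met, namely $\mu\neq 0$, $\mu\in\cM^{\infty}(\RR^d)$, $\supp(\mu)$ contained in a Meyer set, and $\cB$-sparse FBS, so I obtain a lattice $\vG\subset\RR^d$ and a finite set $F\subset\RR^d$ with $\supp(\mu)\subseteq\vG+F$.

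The crucial bridge is to upgrade this from $\supp(\mu)$ to the ambient model set $\vL=\oplam(W)$. Since $\supp(\mu)$ is relatively dense and contained in the model set $\vL$, the covering lemma \cite[Lemma~5.5.1]{NS11} furnishes a finite set $F^{}_1\subset\RR^d$ with $\vL\subseteq\supp(\mu)+F^{}_1\subseteq\vG+(F+F^{}_1)$. Thus $\oplam(W)\subseteq\vG+F''$ with $F''\defeq F+F^{}_1$ finite, and Lemma~\ref{lem:m=0} forces $n=0$, contradicting the non-triviality of the CPS and completing the argument. I expect this last bridge, passing from $\supp(\mu)$ to $\vL$ via relative denseness and \cite[Lemma~5.5.1]{NS11}, to be the only step requiring genuine care; the remaining ingredients are direct applications of Corollary~\ref{coro:FTable}, Theorem~\ref{thm:conj1}, and Lemma~\ref{lem:m=0}.
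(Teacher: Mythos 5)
Your proof is correct and follows essentially the same route as the paper's: both establish transformability via condition (3) of Corollary~\ref{coro:FTable}, apply Theorem~\ref{thm:conj1} to place $\supp(\mu)$ inside $\vG+F$, use relative denseness of $\supp(\mu)$ (via strong almost periodicity) together with \cite[Lemma~5.5.1]{NS11} to get $\vL\subseteq\vG+(F+F')$, and conclude $n=0$ from Lemma~\ref{lem:m=0}. The only difference is that you make explicit the step (left implicit in the paper) that sparse support of $\nu$ forces $\widehat{\mu}$ to be pure point, hence $\mu$ has sparse FBS and is an $\ts\SAP$-measure.
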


\begin{proof}
  Suppose to the contrary that such a measure, $\mu\ne 0$, exists. By
  Corollary~\ref{coro:FTable}, $\mu$ is translation bounded and
  Fourier transformable as a measure, with measure Fourier transform
  $\nu$. Then, by Theorem~\ref{thm:conj1}, $\supp(\mu)$ is a subset of
  $\vG+F$, for some lattice $\vG$ and $F\subset \RR^d$ finite.

  Next, as $\mu \in \SAP (\RR^{d})$, $\supp(\mu)$ is relatively
  dense. Therefore, by \cite[Lemma~5.5.1]{NS11}, there exists a finite
  set $F' \subset \RR^d$ such that $\vL \subseteq \supp(\mu)+F'$. 
  This implies
\[
     \vL \, \subseteq \, L+(F+F') \ts ,
\]
so, by Lemma~\ref{lem:m=0}, the CPS has internal space $\RR^0=\{0\}$.
\end{proof}

Note that translation-boundedness of $\nu$ in the above result may be
replaced by any of the sufficient conditions in
Corollary~\ref{coro:FTable}.  In fact, a result of Lev and Olevskii
allows us to answer Meyer's question in a little more generality,
namely for the case that $\nu$ is a slowly increasing measure.  Recall
that a tempered measure $\nu$ is \emph{slowly increasing} when
$\lvert \nu \rvert (B_r) = \cO (r^n)$ as $r\to\infty$ for some
$n\in\NN$, {where $B_r$ denotes the ball of radius
$r$ around $0$,}
which is a mild restriction when $\nu$ is a signed or
complex measure.

For slowly increasing measures $\mu$ and $\nu$, defined as in
\eqref{eq:mu_nu}, with $\nu$ the distributional Fourier transform of
$\mu$ and $\supp (\mu) = \vL$ inside a Meyer set, \cite[Thm.~7.1]{LO2}
states that $S = \supp (\nu)$ is either uniformly discrete or has a
relatively dense set of accumulation points. This means that local
finiteness of $S$ forces $S$ to be uniformly discrete in this case
(and Corollary~\ref{coro:FTable} then implies that $\mu$ is
translation bounded and transformable, so we may proceed as
above). The result is also implied by \cite[Thm.~2.3]{LO2}, which is
an $\RR^d$-version of our Theorem~\ref{thm:trig-coeffs}.

\begin{coro}
  Let\/ $\mu=\sum_{\lambda \in \vL} a(\lambda) \, \delta^{}_\lambda$
  be {tempered and} supported inside a model set\/ $\vL$ in
  a non-trivial, fully Euclidean CPS and let\/ $\nu$, the
  distributional Fourier transform of\/ $\mu$, be a slowly increasing
  measure. Then, if\/ $\nu_{\mathsf{pp}}$ has locally finite support,
  it must be trivial, $\nu_{\mathsf{pp}}=0$.
\end{coro}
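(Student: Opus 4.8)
The plan is to argue by contradiction: assume $\nu_{\mathsf{pp}}\neq 0$ and show that the internal space of the given CPS must collapse. The first task is to promote $\mu$ from a tempered measure to a well-behaved measure. Since $\supp(\mu)\subseteq\vL$ is uniformly discrete (being a model set) and $\nu=\widehat{\mu}$ is, in particular, a measure, Remark~\ref{rem:tempered} (via Lemma~\ref{lem:wap-td-is-tb}) shows that $\mu$ is in fact a translation-bounded weakly almost periodic measure, so $\mu\in\WAP(\RR^d)$. Consequently, $\mu$ admits the Eberlein decomposition $\mu=\mu_{\mathsf{s}}+\mu^{}_0$ into translation-bounded measures, with $\widehat{\mu_{\mathsf{s}}}=\nu_{\mathsf{pp}}$; moreover, since $\supp(\mu)$ lies in a Meyer set, so does $\supp(\mu_{\mathsf{s}})$, by \cite{NS-weight,Nicu2}. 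Because $\nu_{\mathsf{pp}}\neq 0$ by assumption, we have $\mu_{\mathsf{s}}\neq 0$.

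The crux is to bring the Lev--Olevskii dichotomy to bear, and this requires a \emph{purely atomic} Fourier pair, which is precisely why we pass to $\mu_{\mathsf{s}}$. Indeed, $\mu_{\mathsf{s}}$ and its distributional transform $\nu_{\mathsf{pp}}$ are both pure point measures, supported respectively inside a Meyer set and inside $S\defeq\supp(\nu_{\mathsf{pp}})$, and they are both slowly increasing: $\mu_{\mathsf{s}}$ is translation bounded, while $\lvert\nu_{\mathsf{pp}}\rvert\leqslant\lvert\nu\rvert$ forces $\nu_{\mathsf{pp}}$ to inherit slow increase from $\nu$. Applying \cite[Thm.~7.1]{LO2} to the pair $(\mu_{\mathsf{s}},\nu_{\mathsf{pp}})$, we conclude that $S$ is either uniformly discrete or has a relatively dense set of accumulation points. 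By hypothesis $S$ is locally finite, hence closed and discrete with no accumulation points at all, so the second alternative is excluded and $S$ is uniformly discrete; in particular, $S$ is sparse.

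Recalling that $c_{\chi}(\mu)=\widehat{\mu}(\{\chi\})=\nu_{\mathsf{pp}}(\{\chi\})$, we now have $S=\{\chi\in\widehat{G}:c_{\chi}(\mu)\neq 0\}$ sparse, $\mu$ a weakly almost periodic measure with Meyer set support, and $\mu_{\mathsf{s}}\neq 0$. Thus Corollary~\ref{coro:wap} applies and produces a lattice $\vG\subset\RR^d$ and a finite set $F$ with $\supp(\mu_{\mathsf{s}})\subseteq\vG+F$. From here we proceed exactly as in the proof of Corollary~\ref{coro:mey}: since $\mu_{\mathsf{s}}$ is a nonzero strongly almost periodic measure, $\supp(\mu_{\mathsf{s}})$ is relatively dense, so \cite[Lemma~5.5.1]{NS11} yields a finite set $F'$ with $\vL\subseteq\supp(\mu_{\mathsf{s}})+F'\subseteq\vG+(F+F')$. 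Invoking Lemma~\ref{lem:m=0} for the model set $\vL$ in the fully Euclidean CPS then forces the internal dimension to be $0$, contradicting the assumed non-triviality of the CPS. Hence $\nu_{\mathsf{pp}}=0$, as claimed.

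I expect the main obstacle to be exactly this reduction to the pure point pair $(\mu_{\mathsf{s}},\nu_{\mathsf{pp}})$: the Lev--Olevskii theorem is phrased for a measure whose transform is purely atomic, whereas our $\nu$ may carry a continuous part. Making the reduction rigorous hinges on knowing that the strongly almost periodic part $\mu_{\mathsf{s}}$ is again a translation-bounded (hence slowly increasing) measure with Meyer set support and distributional transform \emph{exactly} $\nu_{\mathsf{pp}}$, which is what licenses feeding $(\mu_{\mathsf{s}},\nu_{\mathsf{pp}})$ into \cite[Thm.~7.1]{LO2} in place of $(\mu,\nu)$. (Alternatively, once $S$ is seen to be uniformly discrete, one could apply Theorem~\ref{thm:conj1} directly to $\mu_{\mathsf{s}}$, or appeal to the $\RR^d$-version \cite[Thm.~2.3]{LO2} of Theorem~\ref{thm:trig-coeffs}, but the structural step through $\mu_{\mathsf{s}}$ is the same.)
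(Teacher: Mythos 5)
Your proof is correct and follows essentially the same route as the paper: the paper's own argument also reduces to the pair $(\mu_{\mathsf{s}}, \nu_{\mathsf{pp}})$ via \cite[Thm.~6.1]{ST}, feeds it into the Lev--Olevskii dichotomy \cite[Thm.~7.1]{LO2}, and then invokes the crystallographic structure results together with Lemma~\ref{lem:m=0} to collapse the internal space, exactly as you do. Your version merely unpacks the paper's terse ``apply Corollary~\ref{coro:mey} and the comments following it to $\mu_{\mathsf{s}}$'' into explicit steps (routing through Corollary~\ref{coro:wap} instead of rerunning Corollary~\ref{coro:mey}, which rests on the same Theorem~\ref{thm:conj1} machinery), and correctly identifies the slow-increase of $\nu_{\mathsf{pp}}$ and the Meyer-set support of $\mu_{\mathsf{s}}$ as the points that license the reduction.
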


\begin{proof}
  If $\mu$ is supported inside a fully Euclidean model set, then so is
  $\mu_{\mathsf{s}}$.  By \cite[Thm.~6.1]{ST}, $\nu_{\mathsf{pp}}$ is
  the distributional Fourier transform of $\mu_{\mathsf{s}}$, so
  applying Corollary~\ref{coro:mey} and the comments following it to
  $\mu_{\mathsf{s}}$ gives the result.
\end{proof}

Lemma~\ref{lem:wap-td-is-tb} allows us to use some properties of
weakly almost periodic measures, compare \cite{LS2}, to make some
general statements about the diffraction of measures on $\RR^d$ that
have uniformly discrete support.  The following generalises
\cite[Lemma~10.5]{LO2}, {where we employ the Fourier--Bohr
  coefficients of a measure $\mu$, denoted by $c^{}_{\chi} (\mu)$,
  from Definition~\ref{def:FB-coeffs}.}

\begin{prop}\label{prop:distri}
  Let\/ $\mu$ be a translation-bounded measure on\/ $\RR^d$ such that
  its distributional Fourier transform, denoted by\/ $\nu$, is also a
  measure, and let\/
  $S\defeq \{ \chi \in \RR^d : c^{}_{\chi} (\mu) \ne 0 \}$.  Then, one
  has the following properties:
\begin{enumerate}\itemsep=2pt
\item the autocorrelation\/ $\gamma$ of\/ $\mu$ is unique;
\item $\mu$ possesses the pure point diffraction measure\/
   $ \widehat{\gamma}  =
    \sum_{\chi\in S} |c_{\chi}(\mu)|^2 \, \delta_{\chi}$;
\item $\supp(\nu_{\mathsf{pp}})=S$.
\end{enumerate}
\end{prop}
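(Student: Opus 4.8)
The plan is to prove the three statements in sequence, relying on the machinery of weakly almost periodic measures established earlier. First I would observe that since $\mu$ is translation bounded and its distributional Fourier transform $\nu$ is a measure, Remark~\ref{rem:tempered} tells us that $\mu$ is a weakly almost periodic tempered distribution. To promote this to weak almost periodicity \emph{as a measure}, I would invoke Lemma~\ref{lem:wap-td-is-tb}; however, this requires uniformly discrete support, which is \emph{not} assumed here---only translation-boundedness is given. So the first genuine step is to note that, by \cite[Thm.~5.3]{ST} or the general theory in \cite{MoSt,LS2}, any translation-bounded, Fourier-transformable measure on $\RR^d$ is automatically weakly almost periodic as a measure. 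Since the distributional transform of $\mu$ being a measure is equivalent (via \cite[Thm.~5.2]{Nicu}) to Fourier transformability of $\mu$ as a measure, we obtain $\mu\in\WAP(\RR^d)$ directly.

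Once $\mu\in\WAP(\RR^d)$ is secured, statements $(1)$ and $(2)$ follow from the structural results for weakly almost periodic measures. I would apply \cite[Thm.~7.6]{LS2}, exactly as in the proof of Corollary~\ref{coro:wap}: this theorem guarantees that a weakly almost periodic measure has a \emph{unique} autocorrelation $\gamma$ (giving $(1)$) and that the associated diffraction measure $\widehat{\gamma}$ is pure point, with the explicit representation
\[
  \widehat{\gamma} \, = \sum_{\chi\in S} \lvert c^{}_{\chi}(\mu)\rvert^2 \, \delta^{}_{\chi} \ts ,
\]
where $S = \{\chi : c^{}_{\chi}(\mu)\neq 0\}$ is precisely the support (giving $(2)$). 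Here the Fourier--Bohr coefficients $c^{}_{\chi}(\mu)$ are those from Definition~\ref{def:FB-coeffs}, and their appearance as the diffraction amplitudes is the content of that theorem, so no separate computation is needed.

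For statement $(3)$, the equality $\supp(\nu_{\mathsf{pp}})=S$, the plan is to identify the pure point part of the distributional transform with the atoms detected by the Fourier--Bohr coefficients. The key input is that, for a translation-bounded measure whose distributional transform is a measure, the Eberlein decomposition $\mu=\mu_{\mathsf{s}}+\mu^{}_0$ transforms so that $\nu_{\mathsf{pp}} = \widehat{\mu_{\mathsf{s}}}$ as distributions; this is \cite[Thm.~6.1]{ST}. Combined with \cite[Thm.~7.2]{ST}, which computes the atoms of $\nu$ as $\nu(\{\chi\}) = c^{}_{\chi}(\mu)$ (a mean of $\overline{\chi}\ts\mu$ of the type already used in the proof of Lemma~\ref{lem:FT-able}), we conclude that $\chi\in\supp(\nu_{\mathsf{pp}})$ if and only if $c^{}_{\chi}(\mu)\neq 0$, which is exactly the defining condition for $S$.

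The main obstacle I anticipate is the very first step: establishing $\mu\in\WAP(\RR^d)$ without a discreteness hypothesis on $\supp(\mu)$. Lemma~\ref{lem:wap-td-is-tb} is tailored to uniformly discrete support, so one must instead appeal to the general equivalence between Fourier transformability of a measure and weak almost periodicity---here the careful citation of \cite{Nicu} (for the passage between distributional and measure transforms) and \cite{ST,MoSt} (for the weak almost periodicity) is essential, and getting the hypotheses of those theorems to line up exactly is the delicate part. The remaining two statements are then comparatively routine applications of the cited structure theorems.
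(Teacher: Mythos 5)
Your overall architecture matches the paper's proof: establish $\mu\in\WAP(\RR^d)$, then obtain claims (1) and (2) from \cite[Thm.~7.6]{LS2} and claim (3) from \cite[Thm.~7.2]{ST} together with Definition~\ref{def:FB-coeffs} (your additional appeal to \cite[Thm.~6.1]{ST} is harmless and consistent with how the paper uses that result elsewhere). You also correctly spotted the point that the paper's own proof glosses over: Lemma~\ref{lem:wap-td-is-tb} cannot be invoked verbatim here, because its uniform-discreteness hypothesis is not assumed.

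However, your repair of that first step contains a genuine error. You claim that ``the distributional transform of $\mu$ being a measure is equivalent (via \cite[Thm.~5.2]{Nicu}) to Fourier transformability of $\mu$ as a measure''. That is not what the cited theorem says: as stated in Section~\ref{sec:real}, a measure on $\RR^d$ is Fourier transformable as a measure if and only if it is tempered and its distributional transform is a \emph{translation-bounded} measure. In Proposition~\ref{prop:distri} the transform $\nu$ is only assumed to be a measure; its translation-boundedness is neither given nor established by anything you cite, and avoiding any transformability hypothesis is precisely the point of this proposition (it generalises \cite[Lemma~10.5]{LO2}). So your operative chain ``$\nu$ is a measure $\Rightarrow$ $\mu$ is transformable as a measure $\Rightarrow$ $\mu\in\WAP(\RR^d)$'' breaks at its first link. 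The fix needs no transformability at all and uses only what you have already written down: by Remark~\ref{rem:tempered}, $\mu$ is weakly almost periodic \emph{as a tempered distribution}, and \cite[Thm.~5.3]{ST} --- whose actual content is that the distributional and classical notions of weak almost periodicity coincide for translation-bounded measures, not the statement you attribute to it --- then gives $\mu\in\WAP(\RR^d)$ directly, since translation-boundedness is a hypothesis of the proposition. This is exactly what the paper's citation of Lemma~\ref{lem:wap-td-is-tb} is meant to invoke: only the final step of that lemma's proof is used, the uniform-discreteness assumption there serving solely to produce the translation-boundedness that you are handed for free.
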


\begin{proof}
  Via Lemma~\ref{lem:wap-td-is-tb} and Remark~\ref{rem:tempered}, we
  see that $\mu \in \WAP (\RR^d)$.  Then, claims $(1)$ and $(2)$ are
  clear from \cite[Thm.~7.6]{LS2}, while claim $(3)$ now follows from
  \cite[Thm.~7.2]{ST}; compare Definition~\ref{def:FB-coeffs}.
\end{proof}

The following result generalises \cite[Thm.~10.4]{LO2}, since we do
not require the measure $\mu$ to be translation bounded.

\begin{theorem}\label{thm:finally}
  Let \/$\mu$ be a tempered measure that is supported in a Meyer set
  and has an autocorrelation, $\gamma$. The support of the pure point
  part of the diffraction,
  $S\defeq \supp(\widehat{\gamma}^{}_{\mathsf{pp}})$, is either
  uniformly discrete and contained in finitely many translates of a
  lattice, or is not locally finite and has a relatively dense set of
  accumulation points.
\end{theorem}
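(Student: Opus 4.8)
The plan is to reduce the assertion, via the autocorrelation, to the dichotomy of Lev and Olevskii together with Theorem~\ref{thm:conj1}. First I would record that $\gamma = \mu \circledast \widetilde{\mu}$ is positive definite and translation bounded, and that it is supported in $\vL - \vL$, hence in a Meyer set; in particular, $\gamma$ is Fourier transformable and weakly almost periodic, with Eberlein decomposition $\gamma = \gamma_{\mathsf{s}} + \gamma^{}_0$ (see \cite[Sec.~4.10]{TAO2}). Since $\widehat{\gamma_{\mathsf{s}}}$ is pure point while $\widehat{\gamma^{}_0}$ is continuous, one has $\widehat{\gamma}^{}_{\mathsf{pp}} = \widehat{\gamma_{\mathsf{s}}}$ and therefore $S = \supp(\widehat{\gamma_{\mathsf{s}}})$. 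By \cite{NS-weight, Nicu2}, the strongly almost periodic part $\gamma_{\mathsf{s}}$ again has Meyer set support and is pure point, so from now on I would work with the pair $(\gamma_{\mathsf{s}}, \widehat{\gamma_{\mathsf{s}}})$. If $S = \varnothing$, the first alternative holds trivially, so I assume $S \neq \varnothing$ and hence $\gamma_{\mathsf{s}} \neq 0$.

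The crucial point is that both $\gamma_{\mathsf{s}}$ and its transform $\widehat{\gamma_{\mathsf{s}}}$ are translation bounded --- the latter because the Fourier transform of a transformable, translation-bounded measure is again translation bounded --- and hence both are slowly increasing. Moreover, on $\RR^d$ the measure transform $\widehat{\gamma_{\mathsf{s}}}$ agrees with the distributional Fourier transform of $\gamma_{\mathsf{s}}$ by \cite{Nicu}. This places $(\gamma_{\mathsf{s}}, \widehat{\gamma_{\mathsf{s}}})$ precisely in the hypotheses of \cite[Thm.~7.1]{LO2}, which delivers the dichotomy that $S$ is either uniformly discrete or has a relatively dense set of accumulation points. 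It then remains only to sharpen each branch.

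In the first branch, uniform discreteness of $S$ makes it locally finite and, by Theorem~\ref{thm:equivalences}, $\cB$-sparse for every van Hove sequence $\cB$ in $\RR^d$. I would then apply Theorem~\ref{thm:conj1} to $\gamma_{\mathsf{s}}$, which is translation bounded, has Meyer set support, and has $\cB$-sparse FBS equal to $S$; this yields a lattice $\vG \subseteq \RR^d$ and a finite set $F' \subseteq \RR^d$ with $S \subseteq \vG^{\ts 0} + F'$. A finite union of cosets of a lattice is uniformly discrete, since the distinct coset differences stay bounded away from the lattice, so $S$ lies in finitely many translates of a lattice, as claimed. In the second branch, a relatively dense set of accumulation points shows in particular that $S$ is not discrete, hence not locally finite, which is exactly the second alternative; and the two cases are mutually exclusive because a uniformly discrete set has no accumulation points.

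I expect the genuinely hard analytic step --- that the failure of uniform discreteness forces accumulation points to occur relatively densely --- to be the one imported wholesale from \cite[Thm.~7.1]{LO2}. Within the present argument, the points demanding care are the passage to $\gamma_{\mathsf{s}}$, so that one analyses a measure whose transform is supported exactly on $S$, and the verification that this passage preserves both Meyer set support and translation-boundedness, which is what allows Theorem~\ref{thm:conj1} and the Lev--Olevskii dichotomy to be applied in tandem.
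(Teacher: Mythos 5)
Your proposal is correct and follows essentially the same route as the paper's proof: pass to the autocorrelation $\gamma$, use the Eberlein decomposition together with \cite{NS-weight,Nicu2} and \cite{ST} to place $\gamma_{\mathsf{s}}$ and $\widehat{\gamma}^{}_{\mathsf{pp}}$ in the hypotheses of \cite[Thm.~7.1]{LO2}, and then sharpen the uniformly discrete branch via the crystallographic structure theorem. If anything, your final step is slightly more careful than the paper's, which invokes Theorem~\ref{thm:conj1} for $\gamma$ itself (strictly speaking Corollary~\ref{coro:mixed-FT} is what applies there, since $\widehat{\gamma}$ may still have a continuous component), whereas you apply Theorem~\ref{thm:conj1} to $\gamma_{\mathsf{s}}$, whose transform is exactly $\widehat{\gamma}^{}_{\mathsf{pp}}$, so the sparse-FBS hypothesis is verified directly.
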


\begin{proof}
  From \cite[Thm.~5.1]{ST}, $\gamma$ is a weakly almost periodic,
  tempered distribution, so by Lemma~\ref{lem:wap-td-is-tb}, $\gamma$
  is a weakly almost periodic, translation-bounded measure.  Since
  $\gamma$ is supported inside a Meyer set, $\gamma_{\mathsf{s}}$ is
  supported inside a Meyer set as well \cite{NS-weight,Nicu2}.
  
  Noting that $\gamma_{\mathsf{s}}$ and its Fourier transform,
  $\widehat{\ts\ts\gamma_{\mathsf{\ts s}}\ts\ts} =
  \widehat{\gamma}^{}_{\mathsf{pp}}$, are both translation-bounded and
  hence slowly increasing measures, we may apply
  \mbox{\cite[Thm.~7.1]{LO2}} to the measure $\gamma_{\mathsf{s}}$ to
  see that either $S$ has a relatively dense set of accumulation
  points or is uniformly discrete.
   
  The latter case is non-trivial only when $\gamma \ne 0$.  Then,
  observing that $\gamma$ is positive definite and supported inside a
  Meyer set, we see that $\gamma$ is translation bounded and
  transformable by Corollary~\ref{coro:FTable}, so we may apply
  Theorem~\ref{thm:conj1} to $\gamma$ to obtain the result.
\end{proof}

We combine the results of this section as follows.

\begin{coro}\label{coro:almost-done}
  Let\/ $\mu$ be a tempered measure on\/ $\RR^d$ such that its
  distributional Fourier transform\/ $\nu$ is also a measure.  If\/
  $\mu$ is supported inside a Meyer set, $\vL$ say, and if we set\/
  $S\defeq \supp (\nu^{}_{\mathsf{pp}}) \neq \varnothing$, 
  precisely one of the following situations applies:
\begin{enumerate}\itemsep=2pt
\item $S$ contains a relatively dense set of accumulation points;
\item there exists a lattice\/ $\vG$ in\/ $\RR^d$ together with finite
  sets\/ $F, F^{\ts \prime} \subset \RR^d$ such that\\
  $\vL \subseteq \vG + F$ and\/ $S \subseteq \vG^{\ts 0} + F'$.
\end{enumerate}
\end{coro}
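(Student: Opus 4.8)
The plan is to reduce everything to the autocorrelation $\gamma$ of $\mu$ and then invoke the dichotomy already established in Theorem~\ref{thm:finally}, upgrading its conclusion from a statement about the diffraction support alone to one controlling $\vL$ and $S$ jointly. First I would record that $\supp(\mu)\subseteq\vL$ is uniformly discrete, since $\vL$ is a Meyer set; combined with Remark~\ref{rem:tempered} (so that $\mu$ is weakly almost periodic as a tempered distribution, via \cite[Thm.~5.1]{ST}) and Lemma~\ref{lem:wap-td-is-tb}, this shows that $\mu$ is in fact a translation-bounded, weakly almost periodic measure with $\widehat{\mu}=\nu$. This is precisely the step that lets us dispense with the translation-boundedness hypothesis appearing in the corresponding results of Lev and Olevskii.

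Next I would apply Proposition~\ref{prop:distri} to $\mu$. This simultaneously provides a unique autocorrelation $\gamma$, a pure point diffraction measure $\widehat{\gamma}=\sum_{\chi\in S}|c_{\chi}(\mu)|^2\,\delta_{\chi}$, and the identification $S=\supp(\nu_{\mathsf{pp}})$, so the set $S$ of the statement coincides with $\supp(\widehat{\gamma}^{}_{\mathsf{pp}})=\supp(\widehat{\gamma})$. Since $\gamma$ is positive definite and supported in the Meyer set $\supp(\mu)-\supp(\mu)$, Theorem~\ref{thm:finally} applies directly and yields the basic alternative: either $S$ fails to be locally finite and carries a relatively dense set of accumulation points---this is exactly case~(1)---or $S$ is uniformly discrete.

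In the latter case, $S$ is uniformly discrete, hence weakly uniformly discrete and in particular $\cB$-sparse for every van Hove sequence $\cB$ in $\RR^d$; thus $\mu$ has sparse FBS with $\supp(\widehat{\mu}^{}_{\mathsf{pp}})=S\neq\varnothing$. I would then feed $\mu$ into Corollary~\ref{coro:mixed-FT}, which produces a \emph{single} lattice $\vG$ together with finite sets such that $\supp(\mu^{}_{\mathsf{s}})$ and $\supp(\mu^{}_0)$ both lie in a finite union of translates of $\vG$ and, crucially for the same lattice, $S\subseteq\vG^{\ts 0}+F'$ for a finite set $F'$. In particular $\supp(\mu)\subseteq\vG+F''$ for some finite $F''$.

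The final, genuinely load-bearing step is to pass from $\supp(\mu)$ to the ambient Meyer set $\vL$. Since $\nu_{\mathsf{pp}}\neq 0$ is the distributional Fourier transform of $\mu^{}_{\mathsf{s}}$ by \cite[Thm.~6.1]{ST}, we have $\mu^{}_{\mathsf{s}}\neq 0$; being strongly almost periodic, its support is relatively dense by \cite[Lemma~5.9.1]{NS11}. Applying \cite[Lemma~5.5.1]{NS11} then gives a finite set $F_1$ with $\vL\subseteq\supp(\mu^{}_{\mathsf{s}})+F_1\subseteq\vG+F$ for a finite $F$, which together with $S\subseteq\vG^{\ts 0}+F'$ is exactly case~(2). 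Mutual exclusivity is then automatic: case~(1) forces $S$ to fail local finiteness, whereas in case~(2) the set $S$ sits inside finitely many translates of $\vG^{\ts 0}$ and is therefore locally finite, so at most one alternative can hold. I expect this last step---guaranteeing that the lattice governing $S$ is the same one governing all of $\vL$, rather than merely $\supp(\mu)$---to be the main subtlety, which is why routing through Corollary~\ref{coro:mixed-FT} (where $\vG$ and its dual $\vG^{\ts 0}$ are delivered together) is cleaner than arguing through Theorem~\ref{thm:finally} alone.
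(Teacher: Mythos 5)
Your overall skeleton (Lemma~\ref{lem:wap-td-is-tb} and Remark~\ref{rem:tempered} for translation-boundedness, Proposition~\ref{prop:distri} for the autocorrelation and the identification $S=\supp(\nu_{\mathsf{pp}})=\supp(\widehat{\gamma}\ts)$, Theorem~\ref{thm:finally} for the dichotomy, and the final passage from $\supp(\mu_{\mathsf{s}})$ to $\vL$ via relative denseness and \cite[Lemma~5.5.1]{NS11}) matches the paper's proof closely. However, there is a genuine gap in the middle step. You assert that $\mu$ is Fourier transformable as a measure with $\widehat{\mu}=\nu$, claim that $\mu$ ``has sparse FBS'', and then feed $\mu$ into Corollary~\ref{coro:mixed-FT}. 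None of the results you cite delivers this: Lemma~\ref{lem:wap-td-is-tb} and Remark~\ref{rem:tempered} give only that $\mu$ is a translation-bounded measure lying in $\WAP(\RR^d)$. By \cite[Thm.~5.2]{Nicu}, Fourier transformability of $\mu$ \emph{as a measure} is equivalent to its distributional transform $\nu$ being a \emph{translation-bounded} measure; here $\nu$ is only assumed to be a measure, and its continuous part $\nu_{\mathsf{c}}$ is completely uncontrolled, so measure transformability of $\mu$ is simply not available. Since both the definition of sparse FBS and the hypotheses of Corollary~\ref{coro:mixed-FT} require transformability of $\mu$ as a measure, your application of Corollary~\ref{coro:mixed-FT} to $\mu$ is not justified. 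This distinction between the two Fourier theories is exactly the subtlety that Section~\ref{sec:real} of the paper is organised around.

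The paper circumvents this by invoking Corollary~\ref{coro:wap} instead, which applies to any weakly almost periodic measure with Meyer set support, sparse set $\{\chi : c_{\chi}(\mu)\neq 0\}$ and $\mu_{\mathsf{s}}\neq 0$, with no transformability assumption on $\mu$ itself: its proof routes the application of Corollary~\ref{coro:mixed-FT} through the autocorrelation $\gamma$, which is positive definite and hence transformable as a measure. Your argument can be repaired in the same spirit with minimal changes: rather than applying Corollary~\ref{coro:mixed-FT} to $\mu$, apply it (or Theorem~\ref{thm:conj1}) to $\mu_{\mathsf{s}}$, which \emph{is} transformable as a measure --- by \cite[Thm.~6.1]{ST} its distributional transform is $\nu_{\mathsf{pp}}$, a pure point measure with uniformly discrete support, so Lemma~\ref{lem:FT-able} applies to the pair $(\mu_{\mathsf{s}},\nu_{\mathsf{pp}})$. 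With that substitution, your concluding step (relative denseness of $\supp(\mu_{\mathsf{s}})$ and \cite[Lemma~5.5.1]{NS11} to capture all of $\vL$, plus the exclusivity remark) goes through and recovers case~(2) with the correct dual pair $\vG$, $\vG^{\ts 0}$.
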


\begin{proof}
  By Lemma~\ref{lem:wap-td-is-tb}, the measure $\mu$ is translation
  bounded.  Now, due to Proposition~\ref{prop:distri}, $\mu$ has
  unique autocorrelation $\gamma$ and diffraction
\[
  \widehat{\gamma} \, = \sum_{\chi \in S}
  \lvert c_\chi(\mu) \rvert^2 \, \delta^{}_{\chi} \ts ,
\]
where $S = \supp(\widehat{\gamma}\ts ) = \supp (\nu^{}_{\mathsf{pp}})$.

Then, by Theorem~\ref{thm:finally}, either claim $(1)$ holds, or $S$
is uniformly discrete. In the latter case, $S$ is $\cB$-sparse for all
van Hove sequences, and hence, by Corollary~\ref{coro:wap}, claim
$(2)$ holds.
\end{proof}

The explicit structure can then be summarised as follows.

\begin{coro}
  Let\/ $\mu$ and\/ $\nu$ be as in
  Corollary~\textnormal{\ref{coro:almost-done}}.  Then, there exists a
  CPS\/ $(\RR^d, H, \cL)$ and some\/ $h\in \Cc (H)$ such that the
  autocorrelation and diffraction of\/ $\mu$ are
\[
     \gamma \, = \, \dens (\cL) \, \omega^{}_{h * \widetilde{h}}
     \quad \text{and} \quad
     \widehat{\gamma} \, = \, \dens (\cL)^2 \ts
     \omega^{}_{\lvert \widecheck{h}\rvert^2} \ts ,
\]   
    and that the two cases are then as follows:
\begin{enumerate}\itemsep=2pt
\item 
   $\mu_{\mathsf{s}} = \omega^{}_{h}$ and\/
   $\nu^{}_{\mathsf{pp}} = \dens (\cL) 
    \, \omega^{}_{\widecheck{h}}$;
\item 
    $\mu = \omega^{}_{h}$ and\/ $\nu = \dens (\cL) 
     \, \omega^{}_{\widecheck{h}}$.
\end{enumerate}
Further, in the second case, $\mu$ and\/ $\nu = \widehat{\mu}$ have
the form given in Theorem~\textnormal{\ref{thm:trig-coeffs}}, with an
internal space of the form\/ $H=\ZZ^m \!\times\nts\KK$. \qed
\end{coro}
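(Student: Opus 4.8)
The plan is to assemble the statement from Corollary~\ref{coro:almost-done} and the weighted-model-comb machinery, with only one genuinely new step. First I would collect the running facts. By Lemma~\ref{lem:wap-td-is-tb}, $\mu$ is a translation-bounded measure in $\WAP(\RR^d)$, and by Proposition~\ref{prop:distri} it has a unique autocorrelation $\gamma$ with pure point diffraction $\widehat{\gamma}=\sum_{\chi\in S}\lvert c_{\chi}(\mu)\rvert^2\,\delta_{\chi}$, where $S=\supp(\nu^{}_{\mathsf{pp}})$. Since $\supp(\mu)$ lies in a Meyer set, so does that of the strongly almost periodic part $\mu^{}_{\mathsf{s}}$, by \cite{NS-weight,Nicu2}, and \cite[Thm.~6.1]{ST} gives $\widehat{\mu^{}_{\mathsf{s}}}=\nu^{}_{\mathsf{pp}}$, so that $c_{\chi}(\mu)=\nu^{}_{\mathsf{pp}}(\{\chi\})$.

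Next I would realise $\mu^{}_{\mathsf{s}}$ as a weighted model comb. As $\mu^{}_{\mathsf{s}}$ is strongly almost periodic with Meyer set support, the structure theorem \cite[Thm.~5.5.2]{NS11}, invoked exactly as in the proof of Proposition~\ref{P1}, furnishes a CPS $(\RR^d,H,\cL)$ and a function $h\in\Cc(H)$ with $\mu^{}_{\mathsf{s}}=\omega^{}_{h}$; the transform theorem \cite[Thm.~5.3]{CRS} then yields $\nu^{}_{\mathsf{pp}}=\widehat{\mu^{}_{\mathsf{s}}}=\dens(\cL)\,\omega^{}_{\widecheck{h}}$. Because the diffraction is pure point, the autocorrelation of $\mu$ coincides with that of $\mu^{}_{\mathsf{s}}$, so the Eberlein-convolution identity for weighted model combs \cite[Prop.~5.1]{CRS} gives $\gamma=\dens(\cL)\,\omega^{}_{h*\widetilde{h}}$ together with $\widehat{\gamma}=\dens(\cL)^2\,\omega^{}_{\lvert\widecheck{h}\rvert^2}$. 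This establishes the displayed formulas and, read off from $\mu^{}_{\mathsf{s}}=\omega^{}_{h}$ and $\nu^{}_{\mathsf{pp}}=\dens(\cL)\,\omega^{}_{\widecheck{h}}$, case~(1).

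For case~(2) I would use the dichotomy of Corollary~\ref{coro:almost-done}: here $S$ is uniformly discrete, hence strongly sparse, so $\mu^{}_{\mathsf{s}}$ has sparse FBS. Proposition~\ref{P1} then sharpens the internal space to $H=\ZZ^m\!\times\!\KK$, and Theorem~\ref{thm:trig-coeffs} produces the trigonometric-polynomial form of $\mu^{}_{\mathsf{s}}$ and $\nu^{}_{\mathsf{pp}}$. It remains to pass from $\mu^{}_{\mathsf{s}}$ to $\mu$, that is, to show $\mu=\mu^{}_{\mathsf{s}}$, equivalently that the continuous component $\nu^{}_{\mathsf{c}}=\widehat{\mu^{}_{0}}$ of $\nu$ vanishes. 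Once $\nu$ is pure point, Lemma~\ref{lem:FT-able}, applied with $\vL$ uniformly discrete and $S$ weakly uniformly discrete, certifies that $\mu$ and $\nu$ are Fourier-transformable measures with $\widehat{\mu}=\nu$, whereupon Theorems~\ref{thm:conj1} and~\ref{thm:trig-coeffs} apply to $\mu$ itself, giving $\mu=\omega^{}_{h}$, $\nu=\dens(\cL)\,\omega^{}_{\widecheck{h}}$ and the trigonometric representation claimed.

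The hard part will be exactly this last step: ruling out a nonzero null-weakly-almost-periodic part $\mu^{}_{0}$ in case~(2). I would attack it using the rigidity already in hand, namely $\supp(\mu^{}_{\mathsf{s}}),\supp(\mu^{}_{0})\subseteq\vG+F$ from Corollary~\ref{coro:wap} and the uniform discreteness of $S$: since $\mu^{}_{0}$ is supported on finitely many translates of the lattice $\vG$ and has vanishing Fourier--Bohr coefficients, the crystallographic structure forced on the transform side should leave no room for a continuous spectral contribution. Everything else is a direct citation of the results developed above, which is why the statement is marked as immediate.
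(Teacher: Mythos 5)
Your assembly of the displayed formulas and of case~(1) matches what the paper intends (the corollary is stated with no proof, as an immediate consequence of the preceding results): Lemma~\ref{lem:wap-td-is-tb} and Proposition~\ref{prop:distri} give the unique autocorrelation and pure point diffraction, $\supp(\mu_{\mathsf{s}})$ is Meyer by \cite{NS-weight,Nicu2}, \cite[Thm.~5.5.2]{NS11} gives $\mu_{\mathsf{s}}=\omega^{}_{h}$ with $h\in\Cc(H)$, and \cite[Prop.~5.1]{CRS} gives $\gamma$ and $\widehat{\gamma}$. One technical correction: in case~(1) you cannot get $\nu^{}_{\mathsf{pp}}=\dens(\cL)\,\omega^{}_{\widecheck{h}}$ from \cite[Thm.~5.3]{CRS}, because that theorem \emph{presupposes} that $\mu_{\mathsf{s}}$ is Fourier transformable as a measure, which is unavailable when $S$ is not sparse; instead one compares atoms, $\nu^{}_{\mathsf{pp}}(\{\chi\})=c^{}_{\chi}(\mu)=c^{}_{\chi}(\mu_{\mathsf{s}})=\dens(\cL)\,\widecheck{h}(\chi^{\star})$, using \cite[Thm.~7.2]{ST} and \cite[Thm.~3.3]{LR1}, as in the proof of Theorem~\ref{thm:T2}.

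The genuine gap is exactly the step you flag as hard, and it cannot be closed: in case~(2), $\mu^{}_{0}=0$ does \emph{not} follow from the hypotheses, and your heuristic (crystallographic support plus vanishing Fourier--Bohr coefficients should exclude a continuous spectral contribution) is false. Take $d=1$, $g(x)=\ee^{-\pi x^{2}}$ (so $\widehat{g}=g\ts$), and consider
\[
   \mu \, = \, \delta^{}_{\ZZ} \ts + \sum_{n\in\ZZ} g(n)\, \delta^{}_{n}
   \qquad\text{with}\qquad
   \nu \, = \, \widehat{\mu} \, = \, \delta^{}_{\ZZ} \ts +\ts
   \bigl( g * \delta^{}_{\ZZ} \bigr) \ts ,
\]
where $g*\delta^{}_{\ZZ}$ is absolutely continuous with smooth density $\sum_{k} g(\cdot-k)$. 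This $\mu$ is a tempered, translation-bounded measure supported in the Meyer set $\ZZ$, its distributional transform $\nu$ is a measure, and $S=\supp(\nu^{}_{\mathsf{pp}})=\ZZ\neq\varnothing$ is uniformly discrete with $S\subseteq\vG^{\ts 0}$ for $\vG=\ZZ$; so the pair lies squarely in case~(2) of Corollary~\ref{coro:almost-done}. Yet $\mu^{}_{0}=\sum_{n} g(n)\ts\delta^{}_{n}\neq 0$ is a (finite) null-weakly almost periodic measure supported on the lattice $\ZZ$ whose transform is continuous; consequently $\mu$ is not strongly almost periodic, hence not equal to any $\omega^{}_{h}$ with $h\in\Cc(H)$ (such combs are $\SAP$ by \cite[Thm.~3.1]{LR1}), $\nu$ is not pure point, and $\mu$ does not have the trigonometric-polynomial form of Theorem~\ref{thm:trig-coeffs}. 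Note that Corollary~\ref{coro:wap}, which you invoke, is perfectly consistent with this example: it only locates $\supp(\mu^{}_{0})$ inside $\vG+F$, it never forces $\mu^{}_{0}=0$. So the correct reading of case~(2) is either (a) the conclusions hold for $\mu_{\mathsf{s}}$ and $\nu^{}_{\mathsf{pp}}$ (in case~(2), $\nu^{}_{\mathsf{pp}}$ has bounded atoms on a uniformly discrete set, hence is translation bounded, so \cite[Thm.~5.2]{Nicu} makes $\mu_{\mathsf{s}}$ transformable with $\widehat{\mu_{\mathsf{s}}}=\nu^{}_{\mathsf{pp}}$, and Theorems~\ref{thm:conj1} and~\ref{thm:trig-coeffs} apply to $\mu_{\mathsf{s}}$), or (b) one adds the hypothesis that $\nu$ has no continuous part, in which case Lemma~\ref{lem:FT-able} and Theorems~\ref{thm:conj1}/\ref{thm:trig-coeffs} finish exactly as you cite them. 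What no argument can supply is the implication ``case~(2) $\Rightarrow \mu=\mu_{\mathsf{s}}$'' that your plan rests on.
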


\section*{Acknowledgements}

{It is our pleasure to thank an anonymous referee for 
providing numerous careful comments that significantly
helped to improve the presentation.}
This work was supported by the German Research Foundation (DFG),
within the SFB 1283 at Bielefeld University, by the Natural Sciences
and Engineering Council of Canada (NSERC), via grant 03762-2014, and
by the Australian Research Council (ARC), via Discovery Project DP
180{\ts}102{\ts}215.  \clearpage


\begin{thebibliography}{99}
\small

\bibitem{ARMA1}
L.N.~Argabright and J.~Gil de Lamadrid,
{Fourier analysis of unbounded measures on locally
compact Abelian groups}, \textit{Memoirs Amer.\ Math.\ Soc.},
no.\ 145, AMS, Providence, RI (1974).
  
\bibitem{TAO1}
M.~Baake and U.~Grimm,
\textit{Aperiodic Order. Vol. 1: A Mathematical Invitation}, 
Cambridge University Press, Cambridge (2013).

\bibitem{TAO2}
M.~Baake and U.~Grimm (eds.),
\textit{Aperiodic Order. Vol. 2: Crystallography and
Almost Periodicity}, 
Cambridge University Press, Cambridge (2017).

\bibitem{BHS}
M.~Baake, C.~Huck and N.~Strungaru,
On weak model sets of extremal density,
\textit{Indag.\ Math.} \textbf{28} (2017) 3--31,
\texttt{arXiv:1512.07129}.

\bibitem{BLM}
M.~Baake, D.~Lenz\ and\ R.~V.~Moody, 
Characterization of model sets by dynamical systems, 
\textit{Ergodic Th.\ \& Dynam.\ Syst.}
\textbf{27} (2007) 341--382;
\texttt{arXiv:math/0511648}.

\bibitem{BM}
M.~Baake and R.V.~Moody, 
{Weighted Dirac combs with pure point diffraction},
\textit{J.\ Reine Angew.\ Math.\ (Crelle)}
\textbf{573} (2004) 61--94;
\texttt{arXiv:math.MG/0203030}.

\bibitem{BF}
C.~Berg and G.~Forst,
\textit{Potential Theory on Locally Compact Abelian Groups},
Springer, Berlin (1975).

\bibitem{Cordoba}
A.~C\'{o}rdoba,
Dirac combs,
\textit{Lett.\ Math.\ Phys.} \textbf{17} (1989) 191--196.

\bibitem{DHZ}
T.~Downarowicz, D.~Huczek and G.~Zhang,
Tilings of amenable groups,
\textit{J.\ Reine Angew.\ Math.\ (Crelle)}
\textbf{747} (2019) 277--298;
\texttt{arXiv:1502.02413}.

{\bibitem{Fav}
S.~Favorov,
Tempered distributions with discrete support and spectrum,
\textit{Bull.\ Hellenic Math.\ Soc.} \textbf{62} (2018)
66--79; \texttt{arXiv:1801.08490}.}

\bibitem{GS}
I.M.~Gel'fand and G.E.~Shilov,
\textit{Generalized Functions, Vol.~2: Spaces of Fundamental
and Generalized Functions}, reprint, AMS Chelsea, Providence,
RI (2016).  

\bibitem{ARMA}
J.~Gil de Lamadrid and L.N.~Argabright, 
{Almost periodic measures}, 
\textit{Memoirs Amer.\ Math.\ Soc.} \textbf{85} (1990) no.~ 428,
AMS, Providence, RI (1990).

\bibitem{HR}
E.~Hewitt and K.A.~Ross,
\textit{Abstract Harmonic Analysis I},
2nd ed., Springer, Berlin (1979).

\bibitem{HOG}
J.A.~Hogan,
{A qualitative uncertainty principle for locally compact Abelian groups},
in \textit{Miniconference on Harmonic Analysis and Operator Algebras},
eds.\ M.\ Cowling, C.\ Meaney and W.\ Moran,
Proceedings of the Centre for  Mathematics and its Applications,
vol.~{16}, The Australian National University, Canberra (1988),
pp.~133--142.

\bibitem{HuRi}
C.~Huck and C.~Richard,
On pattern entropy of weak model sets,
\textit{Discr.\ Comput.\ Geom.}
\textbf{54} (2015) 714--757;
\texttt{arXiv:1412.6307}.

\bibitem{KL}
J.~Kellendonk and D.~Lenz,
Equicontinuous Delone dynamical systems,
\textit{Can.\ J.\ Math.} \textbf{65} (2013) 149--170;
\texttt{arXiv:1105.3855}.

\bibitem{KR1}
G.~Keller\ and\ C.~Richard, 
Periods and factors of weak model sets, 
\textit{Israel J.\ Math.} \textbf{229} (2019) 85--132;
\texttt{arXiv:1702.02383}.

\bibitem{KR2}
G.~Keller\ and\ C.~Richard, 
Dynamics on the graph of the torus parametrization, 
\textit{Ergodic Th.\ \& Dynam.\ Syst.}
\textbf{38} (2018) 1048--1085;
\texttt{arXiv:1511.06137}.

\bibitem{Jeff}
J.C.~Lagarias,
Meyer's concept of quasicrystal and quasiregular sets, 
\textit{Commun.\ Math.\ Phys.}
\textbf{179} (1996) 365--376.

\bibitem{Jeff2}
J.C.~Lagarias,
Geometric models for quasicrystals I. Delone sets
of finite type, \textit{Discr.\ Comput.\ Geom.}
\textbf{21} (1999) 161--191.

\bibitem{Jeff-rev}
J.C.~Lagarias,
Mathematical quasicrystals and the problem of diffraction,
in  \textit{Directions in Mathematical Quasicrystals}, 
eds.\ M.\ Baake and R.V.\ Moody,
CRM Monograph Series, vol.~13,
Amer.\ Math.\ Society, Providence, RI (2000),
pp.~61--93.

\bibitem{LR1}
D.~Lenz and C.~Richard,
Pure point diffraction and cut
and project schemes for measures:\ The smooth case,
\textit{Math.\ Z.} \textbf{256} (2007) 347--378;
\texttt{arXiv:math.DS/0603453}.

\bibitem{LS1}
D.~Lenz and N.~Strungaru,
Note on the set of Bragg peaks with high intensity, 
\textit{Ann.\ H.\ Poincar\'{e}} \textbf{17} (2016) 673--687;
\texttt{arXiv:1412.7377}.

\bibitem{LS2}
D.~Lenz and N.~Strungaru,
{On weakly almost periodic measures}, 
\textit{Trans.\ Amer.\ Math.\ Soc.} \textbf{371}
(2019) 6843--6881; \texttt{arXiv:1609.08219}.

\bibitem{LO1}
N.~Lev and A.~Olevskii,
{Quasicrystals and Poisson's summation formula},
\textit{Invent. Math.} \textbf{200} (2015) 585--606;
\texttt{arXiv:1312.6884}.

\bibitem{LO2}
N.~Lev\ and\ A.~Olevskii, 
{Fourier quasicrystals and discreteness of the diffraction spectrum}, 
\textit{Adv.\ Math.} \textbf{315} (2017) 1--26;
\texttt{arXiv:1512.08735}.

\bibitem{Mey72}
Y.~Meyer, 
\textit{Algebraic Numbers and Harmonic Analysis}, 
North Holland, Amsterdam (1972). 

\bibitem{Mey94}
Y.~Meyer,
Quasicrystals, Diophantine approximation and algebraic numbers,
in \textit{Beyond Quasicrystals}, eds.\ F.~Axel and D.~Gratias,
Springer, Berlin, and EDP Sciences, Les Ulis (1995), pp.~3--16.

\bibitem{Meyer}
Y.F.~Meyer,
Measures with locally finite support and spectrum,
\textit{PNAS} \textbf{113} (2016) 3152--3158.

\bibitem{mityagin}
B.~Mityagin,
The zero set of a real analytic function,
\textit{Matem.\ Zametki}  \textbf{107} (2020) 473--475;
\texttt{arXiv:1512.07276}.

\bibitem{Moo97}
R.V.~Moody, 
Meyer sets and their duals, 
in \textit{The Mathematics of Long-Range Aperiodic Order}, 
ed.\ R.V.~Moody,
NATO ASI Ser. C 489, Kluwer, Dordrecht (1997),
pp.~403--441. 

\bibitem{Moo00}
R.V.~Moody, 
Model sets:~A survey,
in \textit{From Quasicrystals to More Complex Systems}, 
eds.\ F.~Axel, F.~Denoyer and J.~P.~Gazeau,
EDP Sciences, Les Ulis, and Springer, Berlin (2000),
pp.~145--166; \texttt{arXiv:math/0002020}.

\bibitem{MoSt}
R.V.~Moody and N.~Strungaru, 
{Almost periodic measures and their Fourier transforms},
in \cite{TAO2}, pp.~173--270.

\bibitem{Osb}
M.S.~Osborne,
On the Schwartz--Bruhat space and the Paley--Wiener
theorem for locally compact abelian groups,
\textit{J.\ Funct.\ Anal.} \textbf{19} (1975) 40--49.

\bibitem{Reiter}
H.~Reiter and J.D.~Stegeman,
\textit{Classical Harmonic Analysis and Locally Compact
Groups}, Clarendon Press, Oxford (2000).

\bibitem{Ric03}
C.~Richard, 
Dense Dirac combs in Euclidean space with pure point diffraction, 
\textit{J.\ Math.\ Phys.} 
\textbf{44} (2003) 4436--4449; \texttt{arXiv:math-ph/0302049}.

\bibitem{CRS}
C.~Richard and N.~Strungaru, 
{Pure point diffraction and Poisson summation}, 
\textit{Ann.\ H.\ Poincar\'{e}} \textbf{18} (2017) 3903--3931;
\texttt{arXiv:1512.00912}.

\bibitem{Rudin}
W.~Rudin,
\textit{Fourier Analysis on Groups},
Wiley, New York (1962).

\bibitem{Martin1}
M.~Schlottmann,
Cut-and-project sets in locally compact Abelian groups,
in \textit{Quasicrystals and Discrete Geometry},
ed.\ J.~Patera, Fields Institute Monographs,
vol.~10, Amer.\ Math.\ Society, Providence, 
RI (1998), pp.~247--264.

\bibitem{Martin2}
M.~Schlottmann, 
{Generalized model sets and dynamical systems}, 
in \textit{Directions in Mathematical Quasicrystals}, 
eds.\ M.\ Baake and R.V.\ Moody,
CRM Monograph Series, vol.~13,
Amer.\ Math.\ Society, Providence, RI (2000),
pp.~143--159.

\bibitem{Schr}
J.-P.~Schreiber,
Approximations diophantiennes et probl\`{e}mes additifs
dans les groupes ab\'{e}liens localement compacts,
\textit{Bull.\ Soc.\ Math.\ France}
\textbf{101} (1973) 297--332.

\bibitem{NS-weight}
N.~Strungaru,  
On weighted Dirac combs supported inside model sets, 
\textit{J.\ Phys.\ A:\ Math.\ Theor.} \textbf{47} (2014)
335202:1--19; \texttt{arXiv:1309.7947}.

\bibitem{NS11}
N.~Strungaru,
{Almost periodic pure point measures}, 
in \cite{TAO2}, pp.~271--342;
\texttt{arXiv:1501.00945}.

\bibitem{Nicu}
N.~Strungaru,
On the Fourier transformability of strongly almost periodic
measures, \textit{Can.\ J.\ Math.}, in press;
\texttt{arXiv:1704.04778}.

\bibitem{Nicu2}
N.~Strungaru,
{On the Fourier analysis of measures with Meyer set support},
\textit{J.\ Funct.\ Anal.} \textbf{278} (2020) 108{\ts}404:1--30;
\texttt{arXiv:1807.03815}.

\bibitem{ST}
N.~Strungaru and V.~Terauds,
Diffraction theory and almost periodic distributions,
\textit{J.\ Stat. Phys.} \textbf{164} (2016) 1183--1216;
\texttt{arXiv:1603.04796}. 

\bibitem{Y}
K.~Yoshida,
\textit{Functional Analysis}, 6th ed.,
Springer, Berlin (1980).

\end{thebibliography}
\end{document}